\def\P{\mathbb{P}}
\def\E{\mathbb{E}}
\def\B{\operatorname{B}}
\def\11{\mathbbm{1}}
\def\Gc{\mathcal{G}}
\def\G{\mathsf G}
\def\Es{\mathsf E}
\def\ER{Erd\H{o}s-R\'enyi\ }
\def\Ps{\mathsf P}
\def\Qs{\mathsf Q}
\def\As{\mathsf A}
\def\Vs{\mathsf V}
\def\Gs{\mathsf G}
\newcommand{\Qc}{\mathcal Q}
\newcommand{\Pb}{\mathbb P}
\newcommand\dif{\mathop{}\!\mathrm{d}}
\newtheorem{DEF}{Definition}[section]
\newtheorem{thm}{Theorem}[section]
\newtheorem{proposition}[thm]{Proposition}
\newtheorem{lemma}[thm]{Lemma}
\theoremstyle{definition}
\newtheorem{remark}[thm]{Remark}
\numberwithin{equation}{section}
\begin{document}
	\title{Matching recovery threshold for correlated random graphs}
	
	\author{Jian Ding\\Peking University \and 
		Hang Du\\Peking University}

	\maketitle

	\begin{abstract}
For two correlated graphs which are independently sub-sampled from a common Erd\H{o}s-R\'enyi graph $\mathbf{G}(n, p)$, we wish to recover their \emph{latent} vertex matching from the observation of these two  graphs \emph{without labels}. When $p = n^{-\alpha+o(1)}$ for $\alpha\in  (0, 1]$, we establish a sharp information-theoretic threshold for whether it is possible to correctly match a positive fraction of vertices. Our result sharpens a constant factor in a recent work by Wu, Xu and Yu.
	\end{abstract}

\section{Introduction}\label{sec:intro}

In this paper, we study the information-theoretic threshold for recovering the latent matching between two correlated \ER graphs. To mathematically make sense of the problem, we first need to choose a model for a pair of correlated \ER graphs, and a natural choice is that the two graphs are independently sub-sampled from a common \ER graph. More precisely, for two vertex sets $V$ and $\mathsf V$ with cardinality $n$, let $E_0$ be the set of unordered pairs $(u,v)$ with $u,v\in V, u\neq v$ and define $\Es_0$ similarly with respect to $\mathsf V$. For some model parameters $p,s\in (0,1)$, define $\Qs$ to be the law for two (correlated) random graphs $G=(V,E)$ and $\mathsf G=(\mathsf V,\mathsf E)$ generated as follow: sample a uniform bijection $\pi^*:V\to \mathsf V$, independent Bernoulli variables $I_{(u,v)}$ with parameter $p$ for $(u,v)\in E_0$ as well as independent Bernoulli variables $J_{(u, v)}, \mathsf J_{(\mathsf u,\mathsf v)}$ with parameter $s$ for $(u, v)\in E_0$ and $(\mathsf u,\mathsf v)\in \Es_0$. Let 
\begin{equation}
	\label{eq:def-of-G_e}
	G_{(u,v)}=I_{(u,v)}J_{(u,v)},\forall (u,v)\in E_0\,,\quad \G_{(\mathsf u,\mathsf v)}=I_{\left((\pi^*)^{-1}(\mathsf u),(\pi^*)^{-1}(\mathsf v)\right)}\mathsf J_{(\mathsf u,\mathsf v)},\forall (\mathsf u,\mathsf v)\in \Es_0\,,
\end{equation}
and $E=\{e\in E_0:G_e=1\}, \mathsf E=\{\mathsf e\in \Es_0:\G_{\mathsf e}=1\}$. It is obvious that marginally $G$ is an \ER graph with edge density $ps$ (which we denote as $\mathbf{G}(n,ps)$) and so is $\mathsf G$.

A fundamental question is to recover the \emph{latent} matching $\pi^*$ from the observation of $(G, \mathsf G)$. More precisely, we wish to find an estimator $\hat \pi$ which is measurable with respect to $(G, \mathsf G)$ that maximizes $\operatorname{overlap}(\pi^*,\hat{\pi})$, where $\operatorname{overlap}(\pi^*, \hat \pi) = |\{v\in V: \pi^*(v) = \hat \pi(v)\}|$. Our main contribution is a sharp information-theoretic threshold for partial recovery, i.e., whether there exists a $\hat \pi$ such that $\operatorname{overlap}(\pi^*,\hat{\pi})\ge \delta n$ for some positive constant $\delta$.

\begin{thm}\label{thm:main}
	Suppose $p=n^{-\alpha+o(1)}$ for some fixed $\alpha\in(0,1]$ (where $o(1)$ denotes a term vanishing in $n$). Let $\lambda_*=\varrho^{-1}(\frac{1}{\alpha})$ (here $\varrho^{-1}$ is defined in Proposition~\ref{prop:densest-subgraph} below) and $\lambda=nps^2$. Then for any positive constant $\varepsilon$ the following hold: 
	\begin{itemize}
		\item If $\lambda\ge \lambda_*+\varepsilon$, then there exist an estimator $\hat{\pi}$ and a positive constant $\delta=\delta(\alpha,\varepsilon)$, such that \begin{equation}\label{eq:main_result_1}
			\Pb[\operatorname{overlap}(\pi^*,\hat{\pi})\ge \delta n]=1-o(1)\,.
		\end{equation}
		\item If $\alpha <1$ and $\lambda\le \lambda_*-\varepsilon$, then for any estimator $\hat{\pi}$ and any positive constant $\delta$, 
		\begin{equation}\label{eq:main_result_2}
			\Pb[\operatorname{overlap}(\pi^*,\hat{\pi})\ge \delta n]=o(1)\,.
		\end{equation}
	\end{itemize}
\end{thm}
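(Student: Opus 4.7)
For achievability ($\lambda \ge \lambda_*+\varepsilon$), my plan is to construct an estimator via subgraph-count signatures. The condition $\lambda > \lambda_*$ unpacks via Proposition~\ref{prop:densest-subgraph} into the existence of a finite rooted graph $(H,v_0)$ whose counting statistic furnishes a usable signal-to-noise ratio. Concretely, for each candidate pair $(u,\mathsf v)\in V\times\mathsf V$ I define a statistic $N_H(u,\mathsf v)$ counting ``coupled'' rooted $H$-copies: pairs of embeddings $(\phi,\psi)$ with $\phi(v_0)=u$ and $\psi(v_0)=\mathsf v$ whose image edges are present in $G$ and $\mathsf G$ respectively and structurally correspond edge-by-edge. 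At a matched pair $(u,\pi^*(u))$ the expectation of $N_H$ is boosted by a factor $\asymp p^{-e_H}$ compared to a generic pair, because both sides must then inherit their edges from a common parent copy in $\mathbf{G}(n,p)$; the density property of $H$ supplied by the proposition guarantees that this signal dominates the fluctuations of $N_H$. Thresholding $N_H(u,\cdot)$ then identifies a positive fraction of matches, with overlap constant $\delta=\delta(\alpha,\varepsilon)$ read off from the density margin.

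For impossibility ($\lambda \le \lambda_*-\varepsilon$, $\alpha<1$), my plan is a likelihood-ratio second-moment computation comparing the planted model to a null in which $\pi^*$ is independent of $(G,\mathsf G)$. Expanding this second moment as a sum indexed by ``intersection'' subgraphs, the principal contributions come from dense subgraphs of an auxiliary Erd\H{o}s--R\'enyi graph of parameter $\lambda/n$; the hypothesis $\lambda<\lambda_*$ is precisely the statement, via $\varrho$, that no such subgraph is dense enough to produce a divergent contribution. A bounded second moment then yields total variation $o(1)$ between planted and null measures on a typical event, and a symmetry / two-point argument converts this into the conclusion that no measurable $\hat\pi$ can attain $\operatorname{overlap}(\pi^*,\hat\pi)\ge\delta n$ with non-vanishing probability.

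The principal obstacle is the impossibility direction. The raw second moment of the likelihood ratio is in fact unbounded: it is inflated by the rare event that the intersection graph contains an atypically dense subgraph. Taming this requires truncating on a ``no-exceptional-subgraph'' event that the planted measure concentrates on, and showing that the truncated second moment equals $1+o(1)$ precisely when $\lambda<\lambda_*$. Pinning this truncation down at the sharp constant $\lambda_*$, rather than only up to a multiplicative factor as in prior work, is the technical heart of the argument; it is also where the assumption $\alpha<1$ enters, forcing the relevant densest-subgraph thresholds to appear at polynomial scales that Proposition~\ref{prop:densest-subgraph} can control. The achievability direction, by contrast, reduces after fixing $H$ to a rather standard first- and second-moment calculation on $N_H$.
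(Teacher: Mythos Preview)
Your impossibility sketch contains a genuine gap. You write that a bounded truncated second moment ``yields total variation $o(1)$ between planted and null measures on a typical event, and a symmetry / two-point argument converts this into the conclusion that no measurable $\hat\pi$ can attain $\operatorname{overlap}(\pi^*,\hat\pi)\ge\delta n$.'' The first implication is fine and indeed is exactly Proposition~\ref{prop:d(P,Q)=o(1)}(ii), already established in \cite{DD22+}. The second implication is the problem: $\operatorname{TV}(\Ps,\Qs)=o(1)$ is a statement about the \emph{marginal} law of $(G,\mathsf G)$ and does not by itself control the \emph{posterior} $\Qc_{G,\mathsf G}$ of $\pi^*$. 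A two-point argument would require $\operatorname{TV}(\Qc[\cdot\mid\pi_1^*],\Qc[\cdot\mid\pi_2^*])=o(1)$ for far-apart $\pi_1^*,\pi_2^*$, but this is false (with $\pi^*$ known one can simply count common edges). The paper explicitly flags this in Section~\ref{subsec:connection-to-densest-subgraph}: impossibility of detection does \emph{not} routinely imply impossibility of partial recovery, and bridging this gap is the main work of Section~\ref{sec:impossibility}. Their route is to bound $\mathbb E_{\Qs}\max_{\tilde\pi}\Qc_{G,\mathsf G}[\operatorname{overlap}(\pi^*,\tilde\pi)\ge\delta n]$ directly, by passing to subsets $A$ of size $K=n^\beta$ with $\beta<1$ close to $1$, controlling $\max_\sigma f(G,\mathsf G,A,\sigma)$ via the crude bound $(\max x)^2\le\sum x^2$, and then computing a second moment of $f$ (Proposition~\ref{prop:second-moment-bound}) with several additional truncations (admissibility, good sets) beyond those needed for detection. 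Your proposal does not contain this mechanism.

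On achievability your approach is also different from the paper's, and your reading of Proposition~\ref{prop:densest-subgraph} is off. That proposition concerns the densest subgraph of a \emph{random} graph $\mathbf G(n,\lambda/n)$, and the maximizer has linear size; it does not hand you a fixed finite rooted $(H,v_0)$ with edge density exceeding $1/\alpha$. The paper's estimator (Definition~\ref{def:estimator}) is not a per-vertex signature but a global one: $\hat\pi$ is any matching whose intersection graph $\mathcal H_\pi$ has a linear-size subgraph of density near $\varrho(\lambda)$ \emph{and} no subgraph of density exceeding $\varrho(\lambda)+\eta$. The latter truncation is essential to make the first-moment union bound over matchings with small overlap go through (Proposition~\ref{prop:union-bound-of-Bc}), via an orbit decomposition and the algebraic Lemma~\ref{lem:compute-the-minimum}. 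A rooted-subgraph-count estimator of the kind you describe may well succeed in some regime, but you have not argued that it reaches the sharp constant $\lambda_*$, and that is the content of the theorem.
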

We emphasize that \eqref{eq:main_result_2} in the case of $\alpha=1$ was shown in \cite{WXY21+}. While our proof should also be able to cover this case, we choose to focus on the case of $\alpha<1$ as this assumption helps avoiding some technical complications. As an important contribution, \cite{WXY21+} established a sharp threshold for $\alpha = 0$ and up-to-constant upper and lower bounds for $0 < \alpha \leq 1$. So in summary, our work is hugely inspired by \cite{WXY21+} and in return sharpens a constant factor therein and thus fills the remaining gap in the regime for $0 < \alpha \leq 1$.
 In addition, we note that the sharp threshold for exact recovery was established in \cite{WXY21+} which concerns the existence of $\hat\pi$ such that $\hat \pi = \pi^*$. 

\subsection{Background and related results}\label{subsec:Background}

Recently, there has been extensive study on the problem of matching the vertex correspondence between two correlated graphs and the closely related problem of detecting the correlation between two graphs. On the one hand, questions of this type have been raised from various applied fields such as social network analysis \cite{NS08,NS09}, computer vision \cite{CSS07,BBM05}, computational biology \cite{SXB08,VCP15} and natural language processing \cite{HNM05}; on the other hand, graph matching problems seem to provide another important set of examples which exhibit the intriguing \emph{information-computation gap}, whose theoretical analysis integrates tools from various branches of mathematics.

Despite the fact that Erd\H{o}s-R\'enyi Graph perhaps does not quite capture important features for any network arising from realistic problems, it is nevertheless reasonable to start our (presumably long) journey of completely understanding the information-computation phase transition for graph matching problems from a clean, simple and in some sense canonical random graph model such as Erd\H{o}s-R\'enyi. Along this line, many progress has been made recently, including information-theoretic analysis \cite{CK16, CK17, HM20, WXY20+,WXY21+} and proposals for various efficient algorithms  \cite{PG11, YG13, LFP14, KHG15, FQRM+16, SGE17, BCL19, DMWX21, BSH19, CKMP19,DCKG19, MX20, FMWX20,  GM20, FMWX19+, MRT21+, MWXY21+}.  As of now, it seems fair to say that we are still relatively far away from being able to completely understand the phase transition for computational complexity of graph matching problems. As in many other problems of this type, an information-theoretic phase transition is easier and usually will also guide the study on the transition for computational complexity. Together with previous works \cite{WXY20+, WXY21+, DD22+} (which were naturally inspired by earlier works such as \cite{CK16, CK17, HM20}), it seems now we have achieved a fairly satisfying understanding on the information-theoretic transition and we hope that this may be of help for future study on computational aspects.

As hinted from earlier discussions, an important (and in fact a substantially more important) research direction is to study graph matching problems on more realistic graph models other than  Erd\H{o}s-R\'enyi. This ambitious program has started seeing some progress, sometimes paralleling to that on Erd\H{o}s-R\'enyi graphs and sometimes inspired by insights accumulated on Erd\H{o}s-R\'enyi.
For instance, a model for correlated randomly growing graphs was studied in \cite{RS20+}, graph matching for correlated stochastic block model was studied in \cite{RS21} and graph matching for correlated random geometric graphs was studied in \cite{WWXY22+}. In a very recent work \cite{CJMNZ22+}, a related matching problem (albeit somewhat different from graph matching) was studied and it seems the method developed therein enjoyed direct and successful applications to single-cell problems.

\subsection{Connection to previous works}\label{subsec:connection-to-densest-subgraph}

We have learned from \cite{WXY20+, WXY21+} important insights on information thresholds for graph matching problems. An additional ingredient we realized is the connection to the densest subgraph, which allowed us to improve \cite{WXY20+} and establish the sharp detection threshold as in \cite{DD22+}. The densest subgraph problem arose in the study of load balancing problem \cite{Hajek90} and much progress has been made on densest subgraphs for random graphs \cite{CSW07, FR07, GW10, FKP16}. Of particular importance to us is the work of \cite{AS16} which in particular established the asymptotic value for the maximal subgraph density of an Erd\H{o}s-R\'enyi graph using the objective method from \cite{AS04}, as incorporated in the next proposition.
\begin{proposition}
	\label{prop:densest-subgraph}
	(\cite[Theorems 1 and 3]{AS16}, see also \cite[Propositions~2.1 and 2.3]{DD22+})
	There exists a continuous, strictly increasing and unbounded function $\varrho:[1,\infty)\to[1,\infty)$ (which can be explicitly characterized via a variational problem) with $\varrho(1)=1$, such that for any $\lambda\ge 1$ the maximal edge-vertex ratio over all nonempty subgraphs of an \ER graph $\mathcal H=(V,\mathcal E)\sim \mathbf{G}(n,\frac{\lambda}{n})$ concentrates around $\varrho(\lambda)$ as $n\to\infty$, i.e.,
	\begin{equation}
		\label{eq:density-concentration}
		\max_{\emptyset \neq U\subset V}\frac{|\mathcal E(U)|}{|U|}\to\varrho(\lambda)\mbox{ in probability as }n\to\infty\,.
	\end{equation}
	Furthermore, when $\lambda>1$, there is a constant $c_\lambda>0$ such that with probability tending to $1$ as $n\to\infty$, the densest subgraph in $\mathcal H$ (i.e. the maximizer of the left hand side of \eqref{eq:density-concentration}) has size at least $c_\lambda n$.
\end{proposition}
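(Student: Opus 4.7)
The plan is to invoke the objective method of Aldous and Steele \cite{AS04}, tailored for computing scaling limits of extremal functionals on sparse random graphs. First I would note that $\mathcal H\sim \mathbf G(n,\lambda/n)$ rooted at a uniform vertex converges in the local weak sense to a Poisson Galton-Watson tree $\mathcal T_\lambda$ with offspring distribution $\mathrm{Poisson}(\lambda)$. The functional $\max_{\emptyset\neq U\subset V}|\mathcal E(U)|/|U|$ has a natural infinite-graph analog on $\mathcal T_\lambda$ expressible through a load-balancing / fractional-matching LP (the same LP whose finite-graph version has optimum equal to the maximum subgraph density by Hajek's duality). I would define $\varrho(\lambda)$ to be the optimal value of this LP on $\mathcal T_\lambda$, realized by a unimodular random decoration of the tree whose expected root-load equals $\varrho(\lambda)$.

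Given this definition, the convergence \eqref{eq:density-concentration} splits into an upper and a lower bound. For the upper bound, for any $\varepsilon>0$ a first-moment count of subsets $U$ with $|\mathcal E(U)|/|U|\ge \varrho(\lambda)+\varepsilon$, broken into small-size and linear-size ranges and combined with LP duality to exclude the linear regime, tends to zero. For the lower bound, local weak convergence together with a truncation argument produces a linear number of vertices whose depth-$K$ neighborhoods match a near-optimal configuration on $\mathcal T_\lambda$; a greedy extraction (or Hajek's peeling algorithm) then assembles them into an actual subgraph of $\mathcal H$ with density at least $\varrho(\lambda)-o(1)$. Concentration is upgraded from convergence in expectation to convergence in probability via an edge-exposure martingale, since flipping one edge changes $\max_U|\mathcal E(U)|/|U|$ by $O(1)$, so Azuma-Hoeffding gives $n^{-1/2+o(1)}$ fluctuations.

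The qualitative properties of $\varrho$ fall out of the variational description. Monotonicity in $\lambda$ is immediate from the standard edge-coupling between $\mathbf G(n,\lambda/n)$ and $\mathbf G(n,\lambda'/n)$ for $\lambda<\lambda'$, and strict monotonicity plus continuity follow by perturbing the optimal decoration and showing the value depends analytically on $\lambda$. Unboundedness is clear since $U=V$ already gives $\varrho(\lambda)\ge \lambda/2-o(1)$, and the normalization $\varrho(1)=1$ reflects the classical fact that at criticality the $2$-core of $\mathbf G(n,1/n)$ is empty, so every subgraph is a forest or unicyclic with density $\le 1$. For the final assertion, in the supercritical regime $\lambda>1$ the $2$-core of the giant component has size $\Theta(n)$ with density bounded away from $1$, giving a linear-size candidate; to show the \emph{maximizer} itself has size $\ge c_\lambda n$, combine this with the first-moment argument above applied to subgraphs of sublinear size to rule out that the optimum is attained on $o(n)$ vertices.

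The main obstacle is the construction of the invariant optimal decoration of $\mathcal T_\lambda$ and the verification that its root-intensity is precisely the limit of the finite-$n$ optimum; this is the core technical content of \cite{AS16}, requiring a careful use of the mass-transport principle on unimodular random trees. Once that is in hand, the first/second moment estimates and the martingale concentration are essentially routine, and the structural properties of $\varrho$ and the linear-size statement follow without further surprises.
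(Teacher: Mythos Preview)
The paper does not prove this proposition at all: it is stated with the citation ``(\cite[Theorems 1 and 3]{AS16}, see also \cite[Propositions~2.1 and 2.3]{DD22+})'' and then used as a black box throughout. So there is no ``paper's own proof'' to compare against; your proposal instead sketches the argument of Anantharam--Salez \cite{AS16}, which is indeed the source.

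Your outline of \cite{AS16} is broadly faithful: the objective method, local weak convergence of $\mathbf G(n,\lambda/n)$ to the Poisson Galton--Watson tree, the load-balancing LP interpretation of maximal subgraph density via Hajek's duality, and the identification of $\varrho(\lambda)$ as the value of an infinite unimodular optimization problem are exactly the ingredients of that paper. One technical point where your sketch is shakier than the rest: the edge-exposure martingale step as you wrote it does not work, since ``flipping one edge changes $\max_U|\mathcal E(U)|/|U|$ by $O(1)$'' combined with $\Theta(n)$ edges gives fluctuation bounds of order $\sqrt{n}$, not $n^{-1/2+o(1)}$. To make Azuma useful you would need a Lipschitz constant of order $1/n$, which requires knowing in advance that the maximizer has linear size---circular. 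In \cite{AS16} the concentration is obtained differently, essentially as a byproduct of the interpolation and local weak convergence machinery rather than a direct martingale argument. This is a real gap in your sketch, but since the present paper only quotes the result, it is not something you would need to repair here.
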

Denote $\varrho^{-1}:[1,\infty)\to[1,\infty)$ for the inverse function of $\varrho$. Building on insights from \cite{WXY20+} and using Proposition~\ref{prop:densest-subgraph}, we have established the following detection threshold in \cite{DD22+} as stated in the next proposition. Define $\Ps$ as the law of a pair of independent \ER graphs $\mathbf{G}(n,ps)$ on $V$ and $\Vs$, respectively. For two probability measures $\mu$ and $\nu$, we denote by $\operatorname{TV}(\mu, \nu) = \sup_{A}(\mu(A) - \nu(A))$ the total variation distance between $\mu$ and $\nu$.
\begin{proposition}
	\label{prop:d(P,Q)=o(1)}
	(\cite[Theorem 1.1]{DD22+})
	With notations in Theorem~\ref{thm:main}, the following hold:
	\begin{itemize}
		\item [(i)] If $\lambda\ge \lambda_*+\varepsilon$, then $\operatorname{TV}(\Ps,\Qs)=1-o(1)$;
		\item [(ii)] If $\alpha<1$ and $\lambda\le \lambda_*-\varepsilon$, then $\operatorname{TV}(\Ps,\Qs)=o(1)$.
	\end{itemize}
\end{proposition}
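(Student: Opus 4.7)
The plan is to analyze both directions of the proposition through the behavior of the densest subgraph of the \emph{common-edge graph} of $G$ and $\Gs$, identified via a candidate matching. Under $\Qs$, the common edges read through the true matching $\pi^{*}$ form a $\mathbf{G}(n,ps^{2})=\mathbf{G}(n,\lambda/n)$ random graph whose densest subgraph has edge-vertex ratio concentrating at $\varrho(\lambda)$, by Proposition~\ref{prop:densest-subgraph}. Under $\Ps$, for any \emph{fixed} candidate matching $\sigma$, the analogous graph is only $\mathbf{G}(n,p^{2}s^{2})$, much sparser. The threshold $\lambda_{*}=\varrho^{-1}(1/\alpha)$ emerges as the balance point where the combinatorial entropy of searching over matchings matches the rarity of producing a dense common substructure by chance, with $p=n^{-\alpha+o(1)}$ converting the matching entropy into a density budget of exactly $1/\alpha$.

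For direction (i), where $\lambda\ge\lambda_{*}+\varepsilon$, I would exhibit an explicit (inefficient) test: scan over all pairs $(U,\sigma)$ with $U\subset V$ of size at least $cn$ and injection $\sigma:U\to\Vs$, and accept the correlated hypothesis if some pair yields edge-vertex ratio at least $1/\alpha+\eta$ in the induced common-edge subgraph. Under $\Qs$, Proposition~\ref{prop:densest-subgraph} together with its linear-size-densest-subgraph addendum produces such a pair via $\sigma=\pi^{*}|_{U}$ with probability $1-o(1)$, since $\varrho(\lambda)>1/\alpha$. Under $\Ps$, a first-moment union bound over all $(U,\sigma)$, using a binomial tail on the common-edge count (mean $\binom{|U|}{2}(ps)^{2}$), shows that the expected number of witnesses is $o(1)$: the combinatorial factor $\binom{n}{k}^{2}k!$ is beaten by the tail factor as soon as $\varrho(\lambda)>1/\alpha$, with slack provided by $\alpha<1$.

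For direction (ii), where $\alpha<1$ and $\lambda\le\lambda_{*}-\varepsilon$, I would run a truncated second moment method on the likelihood ratio $d\Qs/d\Ps$. Let $\mathcal{G}$ be the event that no subgraph of the common-edge graph (over any candidate matching) exceeds density $1/\alpha-\eta$; the same first-moment argument yields $\Ps[\mathcal{G}]=1-o(1)$, and a parallel computation using $\varrho(\lambda)<1/\alpha$ applied to the underlying $\mathcal H\sim \mathbf{G}(n,p)$ yields $\Qs[\mathcal{G}]=1-o(1)$, so truncating changes total variation by $o(1)$. Expanding $\chi^{2}(\Qs\,\11_{\mathcal{G}},\Ps)$ produces a sum indexed by pairs of candidate matchings $(\sigma_{1},\sigma_{2})$, each term weighted exponentially in the number of edges jointly preserved under $\sigma_{1}$ on $G$ and $\sigma_{2}$ on $\Gs$; the truncation $\mathcal{G}$ caps the density of these jointly preserved subgraphs, turning the sum into a controllable $\chi^{2}\le 1+o(1)$, which via Cauchy--Schwarz yields $\operatorname{TV}(\Qs,\Ps)=o(1)$.

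The main obstacle is the second moment in direction (ii): one must design $\mathcal{G}$ to be simultaneously typical under both measures and strong enough to suppress the dominant contributions of adversarially aligned permutation pairs $(\sigma_{1},\sigma_{2})$, all while extracting the correct constant $\lambda_{*}$. This comes down to invoking the variational characterization of $\varrho$ inside Proposition~\ref{prop:densest-subgraph} to precisely cancel the permutation entropy per vertex; it is this calibration that pins down the sharp threshold at $\lambda_{*}=\varrho^{-1}(1/\alpha)$ rather than any weaker bound.
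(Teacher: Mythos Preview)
The proposition is cited from \cite{DD22+} and is not proved in this paper, so there is no in-paper proof to compare against directly. However, the appendix (proof of Proposition~\ref{prop:second-moment-bound}) explicitly adapts the second-moment argument of \cite{DD22+}, and Section~\ref{sec:possibility} references the detection statistic there, so the original method can be inferred.

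Your sketch for (i) matches the approach of \cite{DD22+}: the test statistic is the maximal subgraph edge-vertex ratio of the $\sigma$-intersection graph over all matchings $\sigma$, restricted to linear-size subsets, and a first-moment union bound under $\Ps$ handles the null. The phrase ``with slack provided by $\alpha<1$'' is misplaced; the first-moment computation goes through uniformly for $\alpha\in(0,1]$.

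For (ii) you have the right headline (truncated second moment on $d\Qs/d\Ps$) but two substantive points diverge from \cite{DD22+}. First, your truncation $\mathcal G$ is an event on the \emph{observable} pair $(G,\Gs)$, quantifying over all candidate matchings. The truncation actually used (mirrored here in Definition~\ref{def:good-event}) is on the \emph{latent} triple $(\pi^*,G,\Gs)$: one requires the true intersection graph $\mathcal H_{\pi^*}\sim\mathbf G(n,\lambda/n)$ to be admissible. Your justification that $\Qs[\mathcal G]=1-o(1)$ via ``the underlying $\mathcal H\sim\mathbf G(n,p)$'' does not work: for $\alpha<1$ the parent graph has average degree $n^{1-\alpha+o(1)}\to\infty$, so being a subgraph of it gives no density control. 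Second, the core of the second-moment bound in \cite{DD22+} is not a direct estimate on the sum over pairs $(\sigma_1,\sigma_2)$ but rather the identity $\E_{\Ps}[L'^2]=\E_{\Qs'}[L']$, followed by a decomposition of $E_0$ into \emph{edge orbits} of $\phi=\pi^{-1}\circ\pi^*$ and the key lemma (cf.\ \eqref{eq:incoplete-orbit-<=1}) that orbits not entirely contained in $\mathcal H_{\pi^*}$ contribute at most $1$. This reduces everything to enumerating unions of orbits inside the admissible graph $\mathcal H_{\pi^*}$, which is exactly where the density bound $\varrho(\lambda)<1/\alpha$ is cashed in (see the enumeration bounds reproduced in Lemmas~\ref{lem-apply-good-set} and~\ref{lem:bound-number-of-pi}). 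Your outline omits the orbit machinery, and without it the truncation you propose is not visibly strong enough to suppress the second moment.
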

In order to prove Theorem~\ref{thm:main}, we combine insights from \cite{WXY21+} and the proof of Proposition~\ref{prop:d(P,Q)=o(1)}. It is perhaps not surprising that the partial recovery threshold coincides with the detection threshold. On the contrary, what may be unexpected at the first glance is that given \cite{DD22+} it still requires substantial amount of non-trivial work to prove impossibility of partial recovery as the default folklore assumption is that detection is easier than recovery. Indeed, detection is easier than exact recovery since with possibility of exact recovery one should be able to detect the correlation by examining the intersection of the two graphs  under this (estimated) matching. But if the estimator only achieves partial recovery, the intersection graph is not necessarily ``bigger'' than that of a random matching, which partly explains the difficulty for proving impossibility of partial recovery. As an analogy, the difficulty we face is similar to the challenge addressed in \cite{WXY21+} provided with \cite{WXY20+}, and it is possible that the additional difficulty is even more substantial for us since we need to nail down the exact threshold.
We refer the reader to the discussions at the beginning of Sections~\ref{sec:possibility} and \ref{sec:impossibility} for overviews of the proofs for \eqref{eq:main_result_1} and \eqref{eq:main_result_2}, respectively.

\subsection{Notations}
\label{subsec:notation-and-fact}
We record in this subsection a list of notations that we shall use throughout the paper. First recall that we have two vertex sets $V,\Vs$ with $|V|=|\Vs|=n$, and $\Ps,\Qs$ are two probability measures on pairs of random graphs on $V$ and $\Vs$ defined previously. In addition, for an edge $e\in E_0$, $G_e$ denotes for the indicator of the event that $e$ is an edge in $G$, and the similar notation applies for $\Gs_\mathsf e$ with $\mathsf e\in \Es_0$.
The following is a collection of notational conventions we shall follow.

$\bullet$ \emph{$\B(A,\As)$, $\B(V,\Vs)$ and $\B(V,\Vs,A,\sigma)$.} For any two sets $A\subset V$ and $\As\subset \Vs$, we denote $\operatorname{B}(A,\As)$ for the set of embeddings from $A$ to $\As$. In particular, $\B(V,\Vs)$ is the set of bijections from $V$ to $\Vs$. For any subset $A\subset V$ and any embedding $\sigma:A\to\Vs$, let $\B(V,\Vs,A,\sigma)\subset \B(V,\Vs)$ be the set of bijections which inhibit to $A$ as $\sigma$. 

$\bullet$ \emph{Induced subgraphs $H_A$ and $H^A$.} For a graph $H=(V,E)$ and a subset $A\subset V$, define $H_A=(A,E_A)$ to be the induced subgraph of $H$ in $A$, and $H^A=(V,E^A)$ to be the subgraph of $H$ obtained by deleting all edges within $A$. Similar notations $\mathsf H_\As=(\As,\Es_\As),\mathsf H^\As=(\Vs,\Es^\As)$ apply for any graph $\mathsf H=(\Vs,\Es)$ and any subset $\As\subset \Vs$.

$\bullet$ \emph{The probability measure $\Qc$.} Define a probability measure $\Qc$ on the space of triples 
\begin{equation}
	\label{eq:space-of-triples}
	\Omega=\{(\pi^*,G,\G):\pi^*\in \B(V,\Vs),G,\G\mbox{ are subgraphs of }(V,E_0),(\Vs,\Es_0)\}
\end{equation}
as follow: the marginal distributions of $\pi^*$ under $\Qc$ is uniform on $\B(V,\Vs)$; conditioned on $\pi^*$,  $(G,\Gs)$ is a pair of correlated \ER graphs given as in \eqref{eq:def-of-G_e}. It is clear that $\Qs$ is nothing but the marginal distribution for the last two coordinates of $\Qc$.

$\bullet$ \emph{Edge bijection and permutation.} For any bijection $\pi\in \B(V,\Vs)$ (respectively permutation $\phi$ on $V$), define the bijection between $E_0$ and $\Es_0$ (respectively permutation on $E_0$) induced by $\pi$ (respectively $\phi$) as $\Pi$ (respectively $\Phi$). That is to say, for any $(u,v)\in E_0$, we have $\Pi((u,v))=(\pi(u),\pi(v))$ (respectively $\Phi((u,v))=(\phi(u),\phi(v))$).

$\bullet$ \emph{Edge orbits and $\mathcal O_\pi$. }Assume $\pi^*\in \B(V,\Vs)$ is fixed. For any $\pi\in \B(V,\Vs)$, $\phi\stackrel{\operatorname{def}}{=}\pi^{-1}\circ \pi^*$ is a permutation on $V$. The induced edge permutation $\Phi$ on $E_0$ decomposes $E_0$ into disjoint edge cycles. We call these cycles as edge orbits induced by $\pi$ and denote $\mathcal O_\pi$ for the collection of all such edge orbits. The point of this notation is that, the families of random variables $\{(G_e,\G_{\Pi(e)}):e\in O\}$ are mutually independent under the law $\Qc[\cdot\mid \pi^*]$ for distinct edge orbits $O\in \mathcal O_\pi$.

$\bullet$ \emph{$\pi$-intersection graphs $\mathcal H_\pi=(V,\mathcal E_\pi)$.} For any triple $(\pi,G,\G)\in \Omega$ defined as in \eqref{eq:space-of-triples}, the $\pi$-intersection graph $\mathcal H_\pi=(V,\mathcal E_\pi)$ of $(G, \G)$ is a subgraph of $(V,E_0)$, where $(u,v)\in \mathcal E_\pi$ if and only if $(u,v)$ is an edge in $G$ and $(\pi(u),\pi(v))$ is also an edge in $\Gs$.

\medskip

\noindent {\bf Acknowledgement.} We warmly thank Nicholas Wormald, Yihong Wu and Jiaming Xu for stimulating discussions. Hang Du is partially supported by the elite undergraduate training program of School of Mathematical Sciences in Peking University.

\section{Possibility for partial recovery}\label{sec:possibility}

In this section, we prove Theorem~\ref{thm:main}-\eqref{eq:main_result_1}, where $\lambda\ge \lambda_*+\varepsilon$ for some arbitrary and fixed $\varepsilon > 0$. The construction of the estimator  $\hat{\pi}$ naturally takes inspiration from the detection statistics as in \cite[Section 2]{DD22+}. For instance, we may simply define $\hat{\pi}$ to be the maximizer for $\max_\pi \max_{U: |U| \geq c_\lambda n} \frac{|\mathcal E_\pi(U)|}{|U|}$ since this was shown in \cite{DD22+} as an efficient statistics for testing correlation against independence (or alternatively for convenience of analysis, choose $\hat {\pi}$ as an arbitrary matching whose intersection graph has maximal subgraph density exceeding a certain threshold). While this estimator may in fact has non-vanishing overlap with the true matching $\pi^*$, it seems rather difficult to prove as we now explain. In order to justify the estimator one has to show that typically any matching that has vanishing overlap with $\pi^*$ can not be a maximizer, and claims of this type are usually proved via a first moment computation. In \cite{DD22+} a first moment computation along this line was carried out to show that when two graphs are independent there is no matching whose intersection graph has subgraph with at least $c_\lambda n$ vertices and also a large edge density. In the independent case, all matchings are symmetric with each other (since graphs are generated independently with $\pi^*$) and thus the first moment computation is rather straightforward. However, in this paper the computation needs to be carried out for correlated graphs and thus overlapping structures with $\pi^*$ for different matchings play a significant role such that not only they complicate the computation but also it seems they actually will lead to a blowup of the first moment (but this does not necessarily imply that ``bad'' things do happen since the blowup may come from an event of small probability). One common approach in this case is to introduce further truncation, and a natural truncation that comes to mind is to pose an upper bound on the maximal subgraph edge density, since a plausible way for the first moment to blow up is due to an event of small probability where the maximal subgraph edge density on an intersection graph is excessively high. With these intuitions in mind, we define our estimator $\hat{\pi}=\hat{\pi}(G,\G)$ as follows.   
\begin{DEF}
	\label{def:estimator}
	Fix some $0<\eta<\frac{\varrho-\alpha^{-1}}{4}$, where $\varrho$ is short for $\varrho(\lambda)$. 
	For any matching $\pi\in \B(V,\Vs)$, we say it is a \emph{reasonable candidate} of $\pi^*$ if its $\pi$-intersection graph $\mathcal H_{\pi}$ of $(G,\G)$ satisfies the following two conditions:
	\begin{enumerate}[(i)]
		\item The edge-vertex ratio of any nonempty subgraph of $\mathcal H_{\pi}$ does not exceed $\varrho+\eta$;
		\item There is a subgraph of $\mathcal H_\pi$ with size at least $c_\lambda n$ and edge-vertex ratio at least $\varrho-\eta$.
	\end{enumerate}
	If the set of reasonable candidates is nonempty, choose one of them as $\hat{\pi}$ arbitrarily. Otherwise pick a $\hat{\pi}\in \B(V,\Vs)$ randomly. 
\end{DEF}
Note that $\mathcal H_{\pi^*}\sim \mathbf{G}(n,\frac{\lambda}{n})$  and thus by Proposition~\ref{prop:densest-subgraph} we see that $\pi^*$ is a reasonable candidate with probability tending to 1 as $n\to \infty$. Therefore, in order to prove \eqref{eq:main_result_1} it suffices to prove the following proposition.
\begin{proposition}
	\label{prop:union-bound-of-Bc}
	There exists $\delta= \delta(\alpha,\varepsilon) >0$ such that the following holds. Denote $\mathcal B$ for the event that there exists a reasonable candidate $\pi$ with $\operatorname{overlap}(\pi^*,\pi)\le \delta n$. Then $\Qc[\mathcal B]\to 0$ as $n\to\infty$.
\end{proposition}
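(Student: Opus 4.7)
The proof is a first moment / union bound over candidate matchings. By the invariance of $\Qc$ under relabeling of $\pi^*$, the conditional probability $\Qc[\mathcal B\mid \pi^*]$ does not depend on $\pi^*$; identifying $V$ with $\Vs$ via $\pi^*$, we may assume $\pi^*=\operatorname{id}$, so that any matching $\pi$ with $\operatorname{overlap}(\pi^*,\pi)\le \delta n$ corresponds to a permutation $\phi=\pi^{-1}$ on $V$ with $|\operatorname{Fix}(\phi)|\le \delta n$. Then
\[
\Qc[\mathcal B\mid \pi^*] \;\le\; \sum_{\phi:\,|\operatorname{Fix}(\phi)|\le \delta n}\Qc\bigl[\phi^{-1}\text{ is a reasonable candidate}\mid \pi^*\bigr],
\]
and we aim to choose $\delta=\delta(\alpha,\varepsilon)$ small enough that the right side is $o(1)$.

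For each fixed $\phi$, condition (ii) of Definition~\ref{def:estimator} is the binding constraint. A union bound over the putative witness subgraph $(A,F)$ with $|A|=m\ge c_\lambda n$ and $|F|\ge (\varrho-\eta)m$, combined with the orbit decomposition of $\Phi$, yields the explicit factorization
\[
\Qc[F\subseteq \mathcal E_\pi\mid \pi^*] \;=\; \Bigl(\tfrac{\lambda}{n}\Bigr)^{|F|}\,p^{\,C(F,\pi)},
\]
where $C(F,\pi)=\sum_{O\in\mathcal O_\pi}c_O(F)$, with $c_O(F)$ equal to the number of maximal arcs of $F\cap O$ in the cyclic orbit $O$ (set to $0$ when $F\cap O = O$). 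This is proved by counting which $I$-, $J$-, and $\mathsf J$-variables must equal $1$: each arc start within an orbit forces one extra $I$-variable to be $1$, contributing a factor $p=n^{-\alpha+o(1)}$.

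The technical heart is then to estimate
\[
\sum_{\phi}\sum_{A,F}(\lambda/n)^{|F|}\,p^{\,C(F,\pi)}
\]
and show it is $o(1)$. I split $F$ into edges lying in \emph{saturated} orbits ($F\supseteq O$) and \emph{arc} edges (the rest). Arc edges behave like edges of an independent $\mathbf{G}(n,p\lambda/n)$, whose densest subgraph density is $\varrho(p\lambda)$, which is close to $1$ since $p\lambda\ll 1$, and hence strictly below the threshold $\varrho-\eta>\alpha^{-1}$ guaranteed by our choice $\eta<(\varrho-\alpha^{-1})/4$. Saturated orbits avoid the $p$-penalty but severely constrain $A$: requiring $O\subseteq F\subseteq E_0(A)$ forces $A$ to contain every $\phi$-cycle touched by $O$, and condition (i) caps the total edge count by $(\varrho+\eta)|A|$, thereby bounding how many orbits can saturate. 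The surplus exponent $\varrho-\eta-\alpha^{-1}>0$ then provides enough exponential decay to absorb the entropies both of choosing $(A,F)$ for fixed $\phi$ and of summing over $\phi$ (organized by cycle type so as not to lose the full $n!$).

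\textbf{Main obstacle.} The delicate case is $\phi$ with many short cycles: these fit inside $A$ easily, producing many saturated orbits that escape the cheap $p$-penalty route. Such configurations must be handled by the densest-subgraph threshold itself, as in \cite{DD22+}, but now applied uniformly over $\phi$. Translating the sharp detection threshold argument of \cite{DD22+} from the ``$\Ps$ vs $\Qs$'' setting into a \emph{uniform-in-$\phi$} first moment bound, without losing sharpness, is the principal new technical input and is precisely what drives the tightness of the constant $\lambda_*$.
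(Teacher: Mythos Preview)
Your plan and your factorization formula $\Qc[F\subseteq\mathcal E_\pi\mid\pi^*]=(\lambda/n)^{|F|}p^{C(F,\pi)}$ are correct, and the high-level architecture (union bound conditioned on $\pi^*$, orbit decomposition, use condition~(i) to tame saturated orbits) matches the paper. But the proposal stops precisely where the real difficulty begins, and one of your heuristics is actually wrong in a way that hides that difficulty.

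\textbf{The incorrect heuristic.} You assert that ``arc edges behave like edges of an independent $\mathbf G(n,p\lambda/n)$''. This is true only for length-$1$ arcs. A length-$j$ arc inside an orbit costs $(\lambda/n)^j\cdot p$, i.e.\ a per-edge cost of $(\lambda/n)\cdot p^{1/j}$, which interpolates from $p\lambda/n$ at $j=1$ all the way up to $\lambda/n$ as $j\to\infty$. So arcs in long orbits are \emph{nearly as cheap as saturated orbits}, and your dichotomy ``saturated (no $p$-penalty) vs.\ arc ($\mathbf G(n,p\lambda/n)$ behavior)'' collapses. In the paper's language this is exactly why the large-deviation rate for edges in a $k$-cycle orbit is $\alpha_k=(k-1)/k$, not $\alpha$; the subtlety lives entirely in the dependence on $k$.

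\textbf{The missing argument.} You correctly flag short cycles of $\phi$ as the obstacle, but you do not resolve it---you say it requires ``translating the sharp detection threshold argument of \cite{DD22+} into a uniform-in-$\phi$ first moment bound'' and leave it there. The paper carries this out by (a) classifying $\pi$ according to the number $n_k$ of $k$-node-cycles of $\phi$ contained in $A$ for $1\le k\le N$ (with $N$ a fixed cutoff depending on $\alpha$); (b) observing that every edge coming from an orbit of length $\le m$ has both endpoints in $A_1\cup\cdots\cup A_m$, so condition~(i) forces the \emph{partial-sum constraints} $\sum_{k\le m}E_k\le(\varrho+\eta)\sum_{k\le m}kn_k$ for all $m\le N$; and (c) solving the resulting linear program (the paper's Lemma~\ref{lem:compute-the-minimum}) to show the probability--enumeration trade-off produces a uniform gain $e^{-\delta_0 T\log n}$. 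Step~(b) is the precise mechanism by which condition~(i) enters---not merely as a global cap $(\varrho+\eta)|A|$ on total saturated edges, as you write, but as a nested family of caps indexed by orbit length. Step~(c) is the algebraic verification that these caps are exactly strong enough; this is where the sharp constant $\lambda_*$ is pinned down, and your proposal does not contain an analogue of it.

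In short: your outline is the right one, but the ``principal new technical input'' you allude to is not supplied, and your arc heuristic would mislead you if you tried to supply it along those lines.
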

We hope to bound $\Qc[\mathcal B]$ by the first moment method and we hope that Condition (i) for reasonable candidate will help controlling the moment. It was not \emph{a priori} clear why Condition (i) suffices, and in fact even after completing the proof we do not feel that there is a one-sentence explanation on why Condition (i) suffices since the proof of Proposition~\ref{prop:union-bound-of-Bc} seems to involve fairly nontrivial probability and combinatorics. 

A key estimate required to prove Proposition~\ref{prop:union-bound-of-Bc} is the tail probability for  $\mathcal H_\pi$ to satisfy Condition (ii).
Due to complication arising from correlations, this is not a very straightforward computation since on the one hand we only have independence between different edge orbits (see Section~\ref{subsec:prior-estimation} for definition) and on the other hand orbits with different sizes have different large deviation rates.  This motivates us to classify $\pi$ according to the structure of orbits (as in Section~\ref{subsec:prior-estimation}) and then perform a union bound over matchings via a union bound over different classes of matchings (as in Section~\ref{subsec:proof-of-union-bound}). Along the way, we will postpone proofs for a few technical lemmas/propositions into appendices to maintain a smooth flow of presentation.

\subsection{Orbits and tail probabilities}\label{subsec:prior-estimation}

In this subsection, for convenience we consider $\pi^*$ as fixed. Mathematically speaking, we condition on the realization of $\pi^*$ and we slightly abuse the notation by denoting $\pi^*$ as its realization. Fix some $A \subset V$. For any $\pi\in \B(V,\Vs)$, let $\phi\stackrel{\operatorname{def}}{=}\pi^{-1}\circ\pi^*$ and recall the definition of $\Pi$ and $\Phi$ in Section~\ref{subsec:notation-and-fact}. Similar with $\mathcal O_\pi$ in Section~\ref{subsec:notation-and-fact}, we also define a set of edge orbits $(\mathcal O_\pi)_A$ in $(E_0)_A$ as the collection of orbits induced by the mapping $\Phi$. More precisely, each orbit has the form $(e_1,\dots,e_k)$ with $e_1,\dots,e_{k}\in (E_0)_A,e_{i+1}=\Phi(e_i)$ for $1\le i\le k-1$ and in addition satisfies 
\begin{equation}\label{eq-cycle-chain}
	e_1=\Phi(e_k)\,,\quad\mbox{or}\quad\ \Phi^{-1}(e_1)\notin (E_0)_A,\Phi(e_k)\notin (E_0)_A\,.
\end{equation} It is clear that $(\mathcal O_\pi)_A$ is a partition of the edge set $(E_0)_A$. Again, the virtue of decomposing $(E_0)_A$ into orbits in $(\mathcal O_\pi)_A$ is that the families of random variables $\{(G_e,\mathsf G_{\Pi(e)}):e\in O\}$ are mutually independent under $\Qc[\cdot\mid\pi^*]$ for distinct $O\in (\mathcal O_\pi)_A$.

From the definition we see that any orbit $ O\in (\mathcal O_\pi)_A$ is a cycle (if the former occurs in \eqref{eq-cycle-chain}) or a chain (if the latter occurs in \eqref{eq-cycle-chain}), and we shall call $O$ a $k$-cycle (respectively $k$-chain) if it is a cycle (respectively chain) with length $k$ (i.e., with $k$ edges in the orbit). For convenience, we denote by $\mathrm{LCM}(x, y)$ as the least common multiple for two integers $x$ and $y$. The following lemma characterizes the structure of orbits in $(\mathcal O_\pi)_A$ in a more detailed way. The lemma is relatively obvious and an illustrative explanation can be found in \cite[Subsection 5.1]{WXY20+}. As a result, we omit its proof.
\begin{lemma}\label{lem-orbit-structure}
	For an edge $(u,v)\in (E_0)_A$, the following hold:\\
	\noindent (a) The orbit of $(u, v)$ in $(\mathcal O_{\pi})_A$ is a cycle if and only if the node cycle of $u$ with respect to  $\phi$  is entirely contained in $A$ and so is that for $v$.\\
	\noindent (b) If the node cycle of $u$ with respect to $\phi$ are disjoint from that of $v$ and both cycles are entirely contained in $A$ with lengths $x$ and $y$ respectively, then the orbit $O\in (\mathcal O_\pi)_A$ containing $(u,v)$ is a $\mathrm{LCM}(x, y)$-cycle.\\
	\noindent (c) If $u,v$  are in the same node cycle with respect to $\phi$ and if this cycle is entirely contained in $A$ with length $x$, then the orbit $O\in (\mathcal O_\pi)_A$ containing $(u,v)$ is an $x$-cycle or an $\frac{x}{2}$-cycle, with the latter happens only when $x$ is even and $v=\phi^{x/2}(u)$. The cycles in the latter case are called \emph{special}.
\end{lemma}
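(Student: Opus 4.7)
The plan is to reduce everything to elementary considerations about the orbits of the vertex permutation $\phi$ acting on individual endpoints. The key observation is that $\Phi((u,v)) = (\phi(u),\phi(v))$, so $\Phi^i((u,v))\in (E_0)_A$ if and only if both $\phi^i(u)\in A$ and $\phi^i(v)\in A$. Thus the structure of the edge orbit of $(u,v)$ inside $(E_0)_A$ is completely governed by the two vertex orbits $\{\phi^i(u)\}_{i\in\Z}$ and $\{\phi^i(v)\}_{i\in\Z}$ and by how they sit inside $A$.

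For part (a), by the definition in \eqref{eq-cycle-chain} the orbit of $(u,v)$ is a cycle (as opposed to a chain) precisely when iterating $\Phi$ in both directions never leaves $(E_0)_A$. By the reduction above this is equivalent to $\phi^i(u),\phi^i(v)\in A$ for every $i\in\Z$, and since $\phi$ is a finite permutation this is in turn equivalent to the full node cycles of $u$ and $v$ under $\phi$ being contained in $A$. Part (a) then follows immediately.

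For parts (b) and (c) I would assume the cycle condition from (a) and compute the period, i.e., the smallest $k\ge 1$ with $\Phi^k((u,v))=(u,v)$ as an \emph{unordered} pair. In (b), since $u$ and $v$ lie on disjoint node cycles of lengths $x$ and $y$, the sets $\{\phi^i(u)\}$ and $\{\phi^i(v)\}$ are disjoint, so $\Phi^k((u,v))=(u,v)$ forces $\phi^k(u)=u$ and $\phi^k(v)=v$, giving exactly the conditions $x\mid k$ and $y\mid k$; the smallest such $k$ is $\mathrm{LCM}(x,y)$. For (c), I would write $v=\phi^j(u)$ for some $1\le j\le x-1$ and observe that $\Phi^k((u,v))=(u,v)$ splits into two cases: either $\phi^k(u)=u$ and $\phi^{k+j}(u)=\phi^j(u)$, which gives $x\mid k$; or $\phi^k(u)=\phi^j(u)$ and $\phi^{k+j}(u)=u$, which gives $k\equiv j$ and $k+j\equiv 0 \pmod{x}$, hence $2j\equiv 0 \pmod{x}$. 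When $j\not\equiv 0 \pmod{x}$ the second possibility can occur only when $x$ is even and $j=x/2$, yielding the smaller period $k=x/2$; this is exactly the ``special'' case. In all remaining cases the period is $x$.

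The main subtlety, and essentially the only nonobvious step, is the unordered nature of edges in part (c), which is the source of the factor-of-two anomaly producing ``special'' orbits; everything else reduces to tracking $\phi$ on endpoints and to a short divisibility calculation.
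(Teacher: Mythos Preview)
Your argument is correct and complete. The paper itself does not give a proof of this lemma at all: it declares the statement ``relatively obvious,'' points to \cite[Subsection 5.1]{WXY20+} for an illustrative explanation, and omits the proof. Your reduction to tracking $\phi$ on the two endpoints, together with the divisibility/period computation (including the unordered-edge case analysis that produces the special $x/2$-cycles), is exactly the natural way to establish the result and matches the reasoning implicit in the cited reference.
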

For $\pi\in \B(V,\Vs)$  and any edge orbit $O\in (\mathcal O_\pi)_A$, let $\mathcal E_O = O \cap \mathcal H_{\pi}$. We have the following exponential moments for $|\mathcal E_O|$ under the law $\Qc[\cdot\mid\pi^*]$. We write $a_n \ll b_n$ if $a_n/b_n \to 0$ as $n\to \infty$.
\begin{proposition}
	\label{prop:exponential-moment-of-orbit}
	For any $\theta$ with $1\ll e^\theta\ll n$, the following hold with $\mu_1=1+e^\theta /n^{1+\alpha+o(1)}$ and $\mu_2=(\lambda+o(1))e^\theta /n$:
	\begin{itemize}
		\item For any $k$-cycle $O_k\in (\mathcal O_\pi)_A$,
		\begin{equation}
			\label{eq:exp-moment-for-cycles}
			\mathbb{E}_{(G,\G)\sim\Qc[\cdot\mid\pi^*]}\exp\left({\theta|\mathcal{E}_{O_k}|}\right)=\mu_1^{k}+\mu_2^{k}\,.
		\end{equation}
		\item For any $k$-chain $O_k\in (\mathcal O_{\pi})_A$,
		\begin{equation}
			\label{eq:exp-moment-for-chains}
			\mathbb{E}_{(G,\G)\sim \Qc[\cdot\mid \pi^*]}\exp\left({\theta|\mathcal E_{O_k}|}\right)\le \mu_1^k+e^\theta n^{-1-2\alpha+o(1)}\mu_2^k\,.
		\end{equation}
	\end{itemize}
\end{proposition}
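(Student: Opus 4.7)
The plan is to compute the exponential moments exactly via a $2\times 2$ transfer-matrix decomposition on the Bernoulli sampling variables, then read off the eigenvalue asymptotics. First I would rewrite $X_i := \11_{e_i\in\mathcal E_\pi} = G_{e_i}\G_{\Pi(e_i)}$ using \eqref{eq:def-of-G_e}: since $(\pi^*)^{-1}\circ\Pi=\Phi^{-1}$ on edges, $\G_{\Pi(e_i)} = I_{\Phi^{-1}(e_i)}\mathsf J_{\Pi(e_i)} = I_{e_{i-1}}\mathsf J_{\Pi(e_i)}$, so $X_i = I_{e_{i-1}}I_{e_i}Y_i$ with $Y_i := J_{e_i}\mathsf J_{\Pi(e_i)}\sim\mathrm{Bern}(s^2)$ mutually independent. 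Using $e^{\theta X_i}=1+(e^\theta-1)X_i$ and integrating the $Y_i$'s out first gives
\[
\E_{\Qc[\cdot\mid\pi^*]}\bigl[e^{\theta|\mathcal E_{O_k}|}\bigr] = \E_I\Bigl[\prod_{i=1}^k\bigl(1 + \beta I_{e_{i-1}}I_{e_i}\bigr)\Bigr], \qquad \beta := (e^\theta-1)s^2.
\]
The edges $e_1,\dots,e_k$ of the orbit are distinct, and in the chain case the boundary edge $e_0:=\Phi^{-1}(e_1)$ lies in $E_0\setminus(E_0)_A$ and is therefore distinct from all of them. So the relevant $I_e$'s are i.i.d.\ $\mathrm{Bern}(p)$.

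With the $2\times 2$ transfer matrix
\[
T := \begin{pmatrix} 1-p & p \\ 1-p & p(1+\beta) \end{pmatrix}, \qquad T(a,b) = \P[I=b](1+\beta a b),
\]
the displayed expectation becomes $\mathrm{tr}(T^k)$ for a $k$-cycle and $\pi^\top T^k\mathbf 1$ for a $k$-chain, with $\pi=(1-p,p)^\top$ the marginal of $I_{e_0}$ and $\mathbf 1=(1,1)^\top$ the free right boundary. The two eigenvalues $\mu_\pm$ of $T$ solve $x^2-(1+p\beta)x+p(1-p)\beta=0$. In the regime $1\ll e^\theta\ll n$, $p\beta=(e^\theta-1)\lambda/n=o(1)$, and a direct expansion of the quadratic gives $\mu_+-1 = p^2\beta/(1-p\beta)\cdot(1+o(1))$ and $\mu_- = p(1-p)\beta/\mu_+\cdot(1+o(1))$; substituting $p=n^{-\alpha+o(1)}$ and $p\beta=\lambda e^\theta/n$ identifies $\mu_+$ with $\mu_1 = 1+e^\theta/n^{1+\alpha+o(1)}$ and $\mu_-$ with $\mu_2 = (\lambda+o(1))e^\theta/n$. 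Hence $\mathrm{tr}(T^k)=\mu_+^k+\mu_-^k=\mu_1^k+\mu_2^k$, which is \eqref{eq:exp-moment-for-cycles}.

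For the chain, resolving $\pi^\top T^k\mathbf 1$ in the eigenbasis of $T$ yields the closed form
\[
\pi^\top T^k\mathbf 1 = \frac{(1-\mu_-)\mu_+^{k+1}+(\mu_+-1)\mu_-^{k+1}}{\mu_+-\mu_-} = C_1\mu_1^k + C_2\mu_2^k,
\]
with $C_1=(1-\mu_-)\mu_+/(\mu_+-\mu_-)$ and $C_2=(\mu_+-1)\mu_-/(\mu_+-\mu_-)$. The algebraic identity $C_1 = 1 - \mu_-(\mu_+-1)/(\mu_+-\mu_-)$ together with $\mu_+\ge 1\ge\mu_-$ gives $C_1\le 1$, which supplies the $\mu_1^k$ term. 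For $C_2$, the above asymptotics give $\mu_+-\mu_- = 1+o(1)$ and hence $C_2 = (\mu_+-1)\mu_-(1+o(1)) = p^3\beta^2(1+o(1))$; unpacking this via $p=n^{-\alpha+o(1)}$ and $p\beta=\lambda e^\theta/n$ produces the claimed bound $e^\theta n^{-1-2\alpha+o(1)}$ on the coefficient of $\mu_2^k$.

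The main delicate point is the expansion of $\mu_+$: one must retain the sub-leading correction of order $p^2\beta$ (rather than merely the leading $p\beta$) because it is precisely this correction that appears in $\mu_1-1$, and one has to verify that the various $n^{o(1)}$ slacks can be chosen uniformly in $k$ and in $\theta$ throughout the allowed range.
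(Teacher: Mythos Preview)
Your approach is essentially the paper's: the transfer matrix $T$ is the operator the paper alludes to in its Remark~\ref{rmk:trace}, and its characteristic polynomial $x^2-(1+p\beta)x+p(1-p)\beta$ is exactly the paper's $x^2-(1+ps^2\nu)x+(ps^2-p^2s^2)\nu$ after writing $\beta=s^2\nu$. The cycle identity $\mathrm{tr}(T^k)=\mu_1^k+\mu_2^k$ and the inequality $C_1\le 1$ via $C_1=1-\mu_-(\mu_+-1)/(\mu_+-\mu_-)$ are both clean and correct.

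The gap is your final ``unpacking'' of $C_2$. Your own asymptotics give $C_2=(\mu_+-1)\mu_-/(\mu_+-\mu_-)=p^3\beta^2(1+o(1))$, and substituting $p=n^{-\alpha+o(1)}$, $p\beta=\lambda e^\theta/n$ yields
\[
C_2 \;=\; p\cdot(p\beta)^2\,(1+o(1)) \;=\; \lambda^2 e^{2\theta}\,n^{-2-\alpha+o(1)},
\]
not $e^\theta n^{-1-2\alpha+o(1)}$. The two differ by the factor $e^\theta n^{\alpha-1+o(1)}$, which is \emph{not} $n^{o(1)}$ once $e^\theta\gg n^{1-\alpha}$ --- a nonempty sub-range of $1\ll e^\theta\ll n$ whenever $0<\alpha<1$. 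So the stated bound on the $\mu_2^k$ coefficient does not follow from your computation in that range.

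It is worth noting that the paper's own derivation has the same slip by a different route: the remainder $O(p^4s^4\nu^2)$ it carries in the expansion of $\mu_1$ tacitly requires $s^2\nu=O(1)$, which again fails exactly when $e^\theta\gg 1/s^2=n^{1-\alpha+o(1)}$; the missing $p^4s^6\nu^3$ term then propagates to give $c_2\sim p^3s^4\nu^2=p^3\beta^2$, in agreement with your (correct) intermediate value. The downstream application in Lemma~\ref{lem:ldp-rate} is unaffected, since the only $\theta$ for which the chain bound is invoked satisfies $e^\theta=n^{\alpha+o(1)}$ (case $\alpha<1$) or $e^\theta=n^{\alpha_{N+1}}$ (case $\alpha=1$), and for those the corrected coefficient $e^{2\theta}n^{-2-\alpha+o(1)}$ still contributes only $\exp(o(n\log n))$ to the moment bound.
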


At this moment, we need to give a ``cutoff'' between short cycles and long cycles. To this end, we define
\begin{equation}
	\label{eq:N}
	N=
	\begin{cases}
		\lfloor(1-\alpha)^{-1}\rfloor,\ &\alpha<1\,,\\
		\lfloor(\varrho-\eta-1)^{-1}\rfloor+1,\ &\alpha=1\,.
	\end{cases}
\end{equation}
(We keep in mind that $\leq N$ means short and $> N$ means long.)
For a fixed $\pi\in \B(V,\Vs)$, 
let $E_{\operatorname{s}}$ be the total number of edges in $\mathcal{H}_\pi$ coming from special cycles in $(\mathcal O_\pi)_A$, and for $1\leq k\leq N$ let $E_k$ be the total number of edges in $\mathcal H_\pi$ coming from non-special $k$-cycles in $(\mathcal O_\pi)_A$, and let $E_{N+1}$ be the total number of edges in $\mathcal H_\pi$ coming from chains and non-special cycles in $(\mathcal O_\pi)_A$ with lengths at least $N+1$.
Proposition~\ref{prop:exponential-moment-of-orbit} leads to the following estimates on tail probabilities. 
\begin{lemma}
	\label{lem:ldp-rate}
	Let $\alpha_k = \frac{k-1}{k}$ for $1\leq k\leq N$ and let $\alpha_{N+1} =\alpha\wedge\frac{N}{N+1}$. For some $M=o(n\log n)$ and 
	any $0\le x\le 2\varrho n$, the following hold:
	\begin{align}
		&\Qc[E_{\operatorname{s}}\ge x\mid \pi^*]\le \exp(M-x\log n)\,,\label{eq:ldp-E_{-1}}\\
		&\Qc[E_k\ge  x\mid \pi^*]\le \exp\left(M-\alpha_kx\log n\right) \mbox{ for }1\le k\le N\label{eq:ldp-E_>0}\,,\\
		&\Qc[E_{N+1}\ge x\mid\pi^*]\le \exp(M-\alpha_{N+1} x\log n)\,.\label{eq:ldp-E_0} 
	\end{align}
\end{lemma}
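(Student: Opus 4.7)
The plan is to apply the Chernoff inequality with a carefully chosen parameter $\theta$ for each of the three tail bounds, combined with Proposition~\ref{prop:exponential-moment-of-orbit} and the mutual independence of the edge-variable families indexed by distinct orbits $O\in(\mathcal{O}_\pi)_A$ under $\Qc[\cdot\mid\pi^*]$. In each case the exponential moment factorizes as a product over the relevant orbits; the required orbit counts come from the elementary bound $m_k\le n^2/(2k)$ on the number of $k$-cycles or $k$-chains (since each consumes $k$ distinct edges from $(E_0)_A$), together with the structural fact from Lemma~\ref{lem-orbit-structure} that each even-length node cycle of $\phi$ produces at most one special cycle.

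For $E_k$ with $1\le k\le N$, I would take $\theta=\alpha_k\log n+o(\log n)$, so that $e^\theta=n^{\alpha_k+o(1)}$. Substituting into \eqref{eq:exp-moment-for-cycles} gives $\mu_1^k-1=O(n^{-1/k-\alpha+o(1)})$ and $\mu_2^k=O(n^{-1+o(1)})$, and hence $m_k\log(\mu_1^k+\mu_2^k)\le n^{2-\min(1/k+\alpha,1)+o(1)}$. The definition \eqref{eq:N} of $N$ is calibrated precisely so that $1/k+\alpha\ge 1$ for every $k\le N$, making this $n^{1+o(1)}=o(n\log n)$, which together with the Chernoff factor $e^{-\theta x}$ yields \eqref{eq:ldp-E_>0}. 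For $E_s$, each even-length node cycle of $\phi$ produces at most one special cycle with edges disjoint from other special cycles, so the total number of special cycles and the total number of special edges are both at most $n/2$. Choosing $\theta=\log n-\omega$ with $\omega\to\infty$ slowly, I obtain $\mu_2=\lambda+o(1)$ and $\mu_1=1+n^{-\alpha+o(1)}$; for $n$ large, $\lambda>1$ forces $\mu_2^k\ge\mu_1^k$ uniformly in $k$, so $\mu_1^k+\mu_2^k\le 2\mu_2^k$, and the product across all special cycles is bounded by $2^{n/2}\lambda^{n/2+o(n)}=e^{O(n)}$, giving \eqref{eq:ldp-E_{-1}}.

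For $E_{N+1}$, I would take $\theta=\alpha_{N+1}\log n+o(\log n)$. For a cycle of length $k>N$, the estimate $\mu_2^k=(\lambda+o(1))^k n^{k(\alpha_{N+1}-1)+o(k)}$ decays geometrically in $k$ (since $\alpha_{N+1}<1$), and the dominant term at $k=N+1$ gives exactly $n^{1+o(1)}$ after multiplying by $m_{N+1}\le n^2/(2(N+1))$; summing over all $k>N$ yields $o(n\log n)$, and the $\mu_1^k$ contribution is negligible for the same reason. For chains, the extra factor $e^\theta n^{-1-2\alpha+o(1)}\le n^{-\alpha+o(1)}\le 1$ in \eqref{eq:exp-moment-for-chains} only shrinks the $\mu_2^k$ term, so the chain bound is dominated by the cycle bound.

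The main obstacle is the $E_{N+1}$ estimate: one must handle a continuum of orbit lengths simultaneously and verify that \eqref{eq:N} is indeed the correct cutoff between the two regimes $\alpha_k=(k-1)/k$ (used for lengths $\le N$) and $\alpha_{N+1}=\alpha\wedge N/(N+1)$ (used for lengths $>N$). In particular, one must check that $\alpha_{N+1}<1$ strictly, ensuring geometric decay in $k$, and that moving the cutoff either direction would spoil one of the two regimes. Book-keeping of the various $n^{o(1)}$ errors arising from Proposition~\ref{prop:exponential-moment-of-orbit} and ensuring that the additive correction $M$ remains $o(n\log n)$ uniformly in $x\le 2\varrho n$ requires some care but is otherwise routine once the Chernoff setup is in place.
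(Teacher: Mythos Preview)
Your overall plan (Chernoff inequality applied orbit-by-orbit via Proposition~\ref{prop:exponential-moment-of-orbit}, with $\theta\approx\alpha_k\log n$ in each case) is exactly the paper's approach, and your treatments of $E_{\operatorname{s}}$ and of $E_k$ for $1\le k\le N$ are correct variants of it. However, your $E_{N+1}$ argument has a genuine gap.

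You write that ``the $\mu_1^k$ contribution is negligible for the same reason'', meaning geometric decay in $k$. This is false: with the choice $\theta\approx\alpha_{N+1}\log n$ one has $\mu_2<1<\mu_1$, so $\mu_1^k$ \emph{grows} with $k$ and eventually dominates $\mu_2^k$. A term-by-term sum $\sum_{k>N}L_k(\mu_1^k-1+\mu_2^k)$ does not converge geometrically. The paper avoids this by the power-mean inequality $(\mu_1^k+\mu_2^k)\le(\mu_1^{N+1}+\mu_2^{N+1})^{k/(N+1)}$ for $k\ge N+1$, reducing the whole product to $(\mu_1^{N+1}+\mu_2^{N+1})^{\sum_k kL_k/(N+1)}$, and then invoking $\sum_k k(L_k+T_k)\le n^2$ to get a bound of the form $(1+O(n^{-1}))^{n^2}=\exp(O(n))$. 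Some device of this sort---aggregating the total edge count rather than summing over $k$---is needed.

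A related issue concerns chains. Recall $E_{N+1}$ collects chains of \emph{all} lengths $k\ge 1$, not just $k>N$. Your sentence ``the extra factor \ldots\ only shrinks the $\mu_2^k$ term, so the chain bound is dominated by the cycle bound'' is dangerous if read as ``drop the factor and bound a $k$-chain by a $k$-cycle'': with $\theta\approx\alpha\log n$ a $1$-cycle has $\mu_2\approx\lambda n^{\alpha-1}$, and there can be up to $n^2$ chains of length $1$, yielding a contribution $n^{1+\alpha}$ in the exponent, which exceeds $o(n\log n)$. The prefactor $e^\theta n^{-1-2\alpha+o(1)}$ is \emph{essential} for short chains, not merely helpful. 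The paper retains it and bounds $\prod_k(1+e^\theta n^{-1-2\alpha+o(1)}\mu_2^k)^{T_k}\le(1+e^{2\theta}n^{-2-2\alpha+o(1)})^{n^2}$ separately, which is $\exp(n^{o(1)})$ with the chosen $\theta$.
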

\begin{proof}
	Suppose there are $S_k$ special $k$-cycles, $L_k$ non-special $k$-cycles and $T_k$ many $k$-chains in $(\mathcal O_\pi)_A$. Then $\sum_{k\ge 1}kS_k\le n$ since each vertex belongs to no more than one special cycle, and $\sum_{k\ge 1}k(L_k+T_k)\le n^2$ since the total number of edges in $(\mathcal O_\pi)_A$ is bounded by $n^2$. 
	
	By taking $\theta=\log n-\log\log n$ and applying Markov's inequality, the left hand side of \eqref{eq:ldp-E_{-1}} is bounded by
	\begin{equation}
		\label{eq:bound-ldp-1}
		\begin{aligned}
			&e^{-\theta x}\prod_{k\ge1}\left(\exp(\theta|\mathcal E_{O_k}|)\right)^{S_k}\stackrel{\eqref{eq:exp-moment-for-cycles}}{=}e^{-\theta x}\prod_{k\ge1}\left(\mu_1^k+\mu_2^k\right)^{S_k}
			\le e^{-\theta x}(\mu_1+\mu_2)^{\sum_{k\ge 1}kS_k}\\
			\le&\ e^{-\theta x}\left[1+\frac{e^\theta}{n^{1+\alpha+o(1)}}+\frac{(\lambda+o(1)) e^\theta}{n}\right]^n
			\leq \exp(o(n)-x\log n+x\log \log n)\,.
		\end{aligned}
	\end{equation}
	
	Similarly, by taking $\theta=\alpha_k\log n-\log \lambda$ and applying Markov's inequality together with \eqref{eq:exp-moment-for-cycles} again, we see the left hand side of \eqref{eq:ldp-E_>0} is bounded by
	\begin{equation}
		\label{eq:bound-ldp-2}
		\begin{aligned}
			e^{-\theta x}\left[\mu_1^k+\mu_2^k\right]^{L_k}
			\le&\ e^{-\theta x}\left(1+\frac{e^\theta}{n^{1+\alpha+o(1)}}+\frac{(\lambda^k+o(1))e^{k\theta}}{n^k}\right)^{n^2}\\\le&\ \exp(x\log\lambda+n+o(n)-\alpha_kx\log n)\,.
		\end{aligned}
	\end{equation}
	
	Finally, applying Markov's inequality together with \eqref{eq:exp-moment-for-cycles} and  \eqref{eq:exp-moment-for-chains}, we see for any $\theta$  with $1\ll e^\theta\ll n$, the left hand side of \eqref{eq:ldp-E_0} is bounded by
	\begin{equation}
		\label{eq:bound-ldp-3}
		\begin{aligned}
			&\ e^{-\theta x}\prod_{k\ge N+1}\left(\mu_1^k+\mu_2^k\right)^{L_k}\prod_{k\ge 1}\left(\mu_1^k+e^\theta n^{-1-2\alpha+o(1)}\mu_2^k\right)^{T_k}\\
			\le &\ e^{-\theta x}\prod_{k\ge N+1}\left(\mu_1^{N+1}+\mu_2^{N+1}\right)^{\frac{kL_k}{N+1}}\prod_{k\ge 1}\mu_1^{kT_k}(1+e^\theta n^{-1-2\alpha+o(1)}\mu_2^k)^{T_k}\\ 
			\le &\ e^{-\theta x}\left(1+e^\theta n^{-1-\alpha+o(1)}+(\lambda^{N+1}+o(1))e^{(N+1)\theta}n^{-N-1}\right)^{n^2}\left(1+e^{2\theta}n^{-2-2\alpha+o(1)}\right)^{n^2}\,.
		\end{aligned}
	\end{equation}
	When $\alpha<1$, we pick $\theta$ such that $e^\theta n^{-1-\alpha+o(1)}$ in the first bracket above equals to $n^{-1}$, then \eqref{eq:bound-ldp-3} becomes $\exp(n+o(n)-(\alpha-o(1))x\log n)$, and here crucially we used that $N+1>(1-\alpha)^{-1}$, implying $n^{(N+1)(\alpha - o(1))-N-1}\ll n^{-1}$. When $\alpha=1$, we just pick $\theta=\alpha_{N+1}\log n-\log\lambda$ and \eqref{eq:bound-ldp-3} becomes $\exp(x\log \lambda+n+o(n)-\alpha_{N+1}x\log n)$.
	
	Take $M=(2\varrho+1)(n\log\log n+o(1)n\log n)=o(n\log n)$ (with a suitable choice of $o(1)$ originating from $o(1)$-terms as above). Then with all of the aforementioned bounds, \eqref{eq:ldp-E_{-1}}, \eqref{eq:ldp-E_>0} and \eqref{eq:ldp-E_0} hold for large $n$, as desired.
\end{proof}

\subsection{Proof of Proposition~\ref{prop:union-bound-of-Bc}}\label{subsec:proof-of-union-bound}

For  $A\subset V$ and any $\sigma\in \B(A,\Vs)$, choose some $\bar\sigma\in \B(V,\Vs,A,\sigma)$ as an extension of $\sigma$ on $V$. It is easy to see that the set $(\mathcal O_{\bar\sigma})_A$ does not depend on the choice of extension $\bar\sigma$, and hence the set of node cycles of $\phi=\bar\sigma^{-1}\circ\pi^*$ which are entirely contained in $A$ is also well-defined. For any sequence of non-negative integers $n_1,\dots,n_N$ satisfying $\sum_{k=1}^{N}kn_k\le |A|$,  we define  $\operatorname{S}(A,n_1,\dots,n_N)\subset \B(A,\Vs)$ to be
\begin{equation*}
	\{\sigma\in \B(A,\Vs):\phi=\bar\sigma^{-1}\circ \pi^*\mbox{ has }n_k\mbox{ node cycles with length }k\mbox{ in }A \mbox{ for }1\le k\le N\}\,.
\end{equation*}
\begin{lemma}
	\label{lem:number-of-permutations-with-many-small-cycle}
	For $A\subset V$ with $|A|=T$ and non-negative integers $n_1,\dots,n_N$ satisfying $\sum_{k=1}^N kn_k\le T$, it holds that
	\begin{equation*}
		|\operatorname{S}(A,n_1,\dots,n_N)|\le\frac{ n(n-1)\cdots(n-T+1)}{\prod_{k=1}^N k^{n_k}n_k!}= \exp(O(n)+(T-n_1-\dots-n_N)\log n)\,.
	\end{equation*}
\end{lemma}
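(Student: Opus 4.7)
The plan is to produce the stated bound by an explicit combinatorial enumeration, and then to convert the resulting factorial expression into the claimed asymptotic form.

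\textbf{Step 1: enumeration.} To any $\sigma\in \operatorname{S}(A,n_1,\dots,n_N)$, the permutation $\phi=\bar\sigma^{-1}\circ \pi^*$ of $V$ is associated; its node cycles entirely contained in $A$ consist of $n_k$ cycles of length $k$ for $1\le k\le N$, and the remaining $n_0\stackrel{\operatorname{def}}{=}T-\sum_{k=1}^N k n_k$ vertices of $A$ are not part of any short cycle inside $A$. I will parametrize $\sigma$ by choosing: (a) an ordered partition of $A$ into subsets of sizes $1,2,\dots,N$ (with multiplicities $n_1,\dots,n_N$) together with a residual set of size $n_0$; (b) a cyclic ordering on each short subset, which is to play the role of a $\phi$-cycle; and (c) an injective extension of $\sigma$ on the residual $n_0$ vertices. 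The key point is that (a) and (b) already determine $\sigma$ on all short-cycle vertices, because $\bar\sigma\circ\phi=\pi^*$ forces $\sigma(\phi(v))=\pi^*(v)$ whenever $\phi(v)\in A$, and because the images $\pi^*(v)$ along the short cycles are automatically distinct and disjoint from the images left for the residual vertices to use in (c).

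\textbf{Step 2: counting.} Step 1 gives
$$|\operatorname{S}(A,n_1,\dots,n_N)|\le \underbrace{\frac{T!}{n_0!\prod_{k=1}^N(k!)^{n_k}n_k!}}_{\text{partition (a)}}\cdot\underbrace{\prod_{k=1}^N((k-1)!)^{n_k}}_{\text{cyclic orders (b)}}\cdot\underbrace{\frac{(n-\sum k n_k)!}{(n-T)!}}_{\text{extension (c)}}=\frac{T!(n-\sum kn_k)!}{n_0!(n-T)!\prod_{k=1}^Nk^{n_k}n_k!}.$$
Since $T\le n$, a term-by-term comparison
$$\frac{T!}{n_0!}=\prod_{j=0}^{\sum kn_k-1}(T-j)\le\prod_{j=0}^{\sum kn_k-1}(n-j)=\frac{n!}{(n-\sum kn_k)!}$$
cancels two factorials and yields the first claimed bound $|\operatorname{S}(A,n_1,\dots,n_N)|\le \frac{n(n-1)\cdots(n-T+1)}{\prod_{k=1}^N k^{n_k}n_k!}$.

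\textbf{Step 3: asymptotic form.} Taking logarithm and using $n(n-1)\cdots(n-T+1)\le n^T$, it remains to show that with $s=n_1+\dots+n_N$,
$$T\log n-\sum_{k=1}^N n_k\log k-\sum_{k=1}^N\log n_k!\le (T-s)\log n+O(n).$$
Rewriting the left side as $(T-s)\log n+\sum_k n_k\log n-\sum_k n_k\log k-\sum_k\log n_k!$, the residual $\sum_k n_k\log(n/(kn_k))+\sum_k(n_k\log n_k-\log n_k!)$ is $O(n)$: Stirling gives $n_k\log n_k-\log n_k!\le n_k+O(\log n)$, and since $N$ is a constant (recall \eqref{eq:N}), the elementary inequality $x\log(n/x)\le n/e$ applied termwise to $\sum_k n_k\log(n/(kn_k))$ contributes at most $N\cdot n/e=O(n)$. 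Combining these, the displayed bound holds, which is exactly the asserted $\exp(O(n)+(T-s)\log n)$.

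\textbf{Anticipated obstacle.} The subtlety is conceptual rather than computational: one must verify in Step 1 that fixing the cycle-partition and cyclic orders on the short part of $A$ truly determines $\sigma$ on those vertices without leaving any hidden freedom, and that the residual vertices are free to map anywhere in $\Vs$ minus the used images without introducing double-counting in our \emph{upper} bound (the enumeration can overcount $\sigma$'s for which the residual part produces extra short cycles in $A$, but since we only need an upper bound this is harmless). After that, Step 3 is a routine Stirling manipulation enabled by the fact that $N$ does not depend on $n$.
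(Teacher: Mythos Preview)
Your proof is correct. The paper takes a different and somewhat slicker route: rather than enumerating $\sigma$ directly, it extends each $\sigma\in\operatorname{S}(A,n_1,\dots,n_N)$ to all $(n-T)!$ bijections $\pi\in\B(V,\Vs,A,\sigma)$, observes that the resulting permutation $\phi=\pi^{-1}\circ\pi^*$ of the full vertex set $V$ must contain at least $n_k$ cycles of length $k$ for each $k\le N$, and then invokes a known bound of Arratia--Tavar\'e \cite{AT92} (restated as \cite[Lemma~13]{WXY21+}) that the number of such permutations of $[n]$ is at most $n!/\prod_k k^{n_k}n_k!$; dividing by $(n-T)!$ gives the claimed inequality. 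Your hands-on enumeration is more self-contained---in effect you re-derive from scratch the particular case of that cycle-counting bound that is needed here---while the paper's argument outsources the combinatorics to a citation. Both arrive at the identical factorial expression, and both finish with the same Stirling manipulation (the paper simply asserts ``the equality follows from Stirling's formula'' without writing out the details you supply in Step~3).
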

\begin{proof}
	Define 
	\[
	\operatorname{S}=\bigcup_{\sigma\in \operatorname{S}(A,n_1,\dots,n_N)}\B(V,\Vs,A,\sigma)
	\]
	to be the collection for all extensions $\pi\in \B(V,\Vs)$ of some $\sigma\in \operatorname{S}(A,n_1,\dots,n_N)$. This is a disjoint union since for $\pi_1, \pi_2$ from different $\B(V,\Vs,A,\sigma_1), \B(V,\Vs,A,\sigma_2)$, we have $\pi_1|_A=\sigma_1\neq \sigma_2=\pi_2|_A$, implying $\pi_1\neq \pi_2$ (here we denote by $\pi|_A$ the restriction of $\pi$ on $A$). In addition, note that for any $\pi\in \operatorname{S}$, $\phi=\pi^{-1}\circ \pi^*$ contains at least $n_k$ many $k$-node cycles for all $1\le k\le N$. By \cite[Theorem 1]{AT92} (see also \cite[Lemma 13]{WXY21+}), the number for such $\phi$ is no more than $\frac{n!}{\prod_{k=1}^N k^{n_k}n_k!}$. As a result, 
	\begin{equation}
		|\operatorname{S}|=(n-T)! \cdot |S(A,n_1,\dots,n_N)|\le \frac{n!}{\prod_{k=1}^N k^{n_k} n_k!}\,,
	\end{equation}
	which yields the desired inequality (in the lemma statement) and the equality follows from Stirling's formula.
\end{proof}

\begin{proof}[Proof of Proposition~\ref{prop:union-bound-of-Bc}] 
	For any  $A \subset V$ with $|A|=T\in [c_\lambda n,n]$ and any embedding $\sigma\in \B(A,\Vs)$, denote $\mathcal B_\sigma$ for the event that the $\bar\sigma$-intersection graph $\mathcal H_{\bar\sigma}$ satisfies the following:\\
	\noindent (i)  $|\mathcal E_{\bar\sigma}(A)|\ge (\varrho-\eta)T$;\\
	\noindent (ii) $|\mathcal E_{\bar\sigma}(U)|\le (\varrho+\eta){|U|}$ for any $U\subset A$.\\
	Again, we note that $\mathcal B_\sigma$ does not depend on the choice of extension $\bar\sigma$.
	
	For each $\pi^*\in \B(V,\Vs)$, it is clear that conditioned on $\pi^*$, $\mathcal B$ implies $\mathcal B_\sigma$ happens for some $A$ with size at least $c_\lambda n$ and some $\sigma\in \B(A,\Vs)$ which agrees with $\pi^*$ on less than $\delta n$ vertices in $A$. Then a simple union bound yields that 
	\begin{align}
		\label{eq:union-bound-0}
		&\ \Qc[\mathcal B\mid \pi^*]
		\le\ \sum_{T\ge c_\lambda n}\sum_{|A|=T} \sum_{n_1, \ldots, n_N}  \sum_{\sigma\in \operatorname{S}(A,n_1,\dots,n_N)}\Qc[\mathcal B_\sigma\mid \pi^*] \nonumber\\
		\le &\ \sum_{|T|\ge c_\lambda n}\sum_{|A|=T}\sum_{n_1,\dots,n_N}|\operatorname{S}(A,n_1,\dots,n_N)|\times\sup_{\sigma\in \operatorname{S}(A,n_1,\dots,n_N)}\Qc[\mathcal B_\sigma\mid \pi^*]\,,
	\end{align}
	where the summation for $n_i$'s is over all non-negative integers $n_1,\dots,n_N$ with $\sum_{k=1}^N kn_k\le T$ and $n_1\le \delta n$ (since $\sigma$ overlaps on less than $\delta n$ vertices with $\pi^*$).
	
	We next turn to bound $\sup_{\sigma\in \operatorname{S}(A,n_1,\dots,n_N)}\Qc[\mathcal B_\sigma\mid \pi^*]$. For any fixed $T,A,n_1,\dots,n_N$ and $\sigma\in \operatorname{S}(A,n_1,\dots,n_N)$, let $A_i\subset A$ be the set of vertices in node cycles of $\phi=\bar\sigma^{-1}\circ \pi^*$ with length $i$ in $A$. Then $|A_i|= i n_i$ since there are $n_i$ many $i$-cycles in $A$. Let $x_{ij}$ be the number of edges in $\mathcal H_\pi$ with two end points in $A_i$ and $A_j$ respectively, but not in special cycles. Then by Lemma~\ref{lem-orbit-structure},
	\[
	E_k=\sum_{i,j: \mathrm{LCM}(i,j)=k}x_{ij}, \mbox{ for all } 1\le k\le N\,.
	\] 
	Now from (ii) in $\mathcal B_\sigma$, we get for all $1\le m\le N$,
	\begin{equation}
		\sum_{k=1}^mE_k=\sum_{i,j:\mathrm{LCM}(i,j)\le m}x_{ij}\le \left| \mathcal E_{\bar\sigma}\left(A_1\cup A_2\cup\cdots\cup A_m\right)\right|\le (\varrho+\eta)\sum_{k=1}^m kn_k\,.
	\end{equation}
	Combined with (i) in $\mathcal B_\sigma$, this motivates us to define
	\begin{align*}
		&\Sigma_T=\left\{(x_0,\dots,x_{N+1})\in \mathbb{Z}^{N+2}: 0\le x_0,\dots,x_{N+1}\le \varrho T\,,
		\sum_{k=0}^{N+1}x_k\ge(\varrho-\eta)T
		\right\}\,,\\
		&\Delta_{n_1,\dots,n_N}=\left\{(x_0,\dots,x_{N+1})\in \mathbb{Z}^{N+2}:\sum_{k=1}^m x_k\le (\varrho+\eta)\sum_{k=1}^m kn_k,\forall 1\le m\le N\right\}\,.
	\end{align*}
	Somewhat mysteriously, the restriction to $\Delta_{n_1,\dots,n_N}$ which originates from our further truncation as in (ii) of $\mathcal B_\sigma$ suffices to control the (otherwise) blowup of the first moment. The seemingly computational coincidence is encapsulated in the following (purely algebraic) lemma. Denote $M(T,n_1,\dots,n_N)$ as
	\begin{equation}
		\min\left\{\sum_{k=1}^N n_k-T+x_0+\sum_{k=1}^{N+1}\alpha_kx_k\ \Bigg|\  (x_0,\dots,x_{N+1})\in \Sigma_T\cap \Delta_{n_1,\dots,n_N}\right\}\,.
	\end{equation}
	In light of Lemmas~\ref{lem:ldp-rate} and \ref{lem:number-of-permutations-with-many-small-cycle}, $M(T,n_1,\dots,n_N)$ captures the tradeoff between probability and enumeration.
	\begin{lemma}
		\label{lem:compute-the-minimum}
		There exist constants $\delta, \delta_0>0$, such that for any $T\in [c_\lambda n,n]$ and $n_1,\dots,n_N$ satisfying $n_1\le \delta n$ and $n_1+2n_2+\dots+Nn_N\le T$, we have that $M(T,n_1,\dots,n_N)\ge \delta_0 T$.
	\end{lemma}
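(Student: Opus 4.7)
The plan is to treat this as an algebraic LP bound, proven by greedy analysis and case-splitting on the size of $S_N = n_1 + 2n_2 + \cdots + N n_N$. First observe that the objective coefficients satisfy $\alpha_1 = 0 < \alpha_2 < \cdots < \alpha_N \le \alpha_{N+1} = \alpha < 1$ (the last inequality uses $\alpha \ge (N-1)/N$, which follows from $N = \lfloor(1-\alpha)^{-1}\rfloor$), so in any minimizer one may take $x_0 = 0$ and fill $x_1, x_2, \ldots$ greedily in order of increasing index subject to the $\Delta$-constraints. This reduces the computation of $M(T, n_1, \ldots, n_N)$ to evaluating this greedy solution.

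In Case A, where $(\varrho+\eta) S_N \ge (\varrho-\eta) T$, let $K^*$ be the smallest index with $(\varrho+\eta) S_{K^*} \ge (\varrho-\eta) T$; choosing $\delta < c_\lambda (\varrho-\eta)/(\varrho+\eta)$ together with $n_1 \le \delta n$ forces $K^* \ge 2$. The greedy minimizer sets $x_k = k(\varrho+\eta) n_k$ for $k < K^*$, $x_{K^*} = (\varrho-\eta) T - (\varrho+\eta) S_{K^*-1}$, and the other $x_k$ to zero, giving $f = T[\alpha_{K^*}(\varrho-\eta) - 1] + (\varrho+\eta) S_{K^*-1}/K^* - (\varrho+\eta - 1)\sum_{k < K^*} n_k + \sum_{k \ge K^*} n_k$. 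Upper-bounding $\sum_{k < K^*} n_k \le (n_1 + S_{K^*-1})/2$ (using $\sum_{k=2}^{K^*-1} n_k \le (S_{K^*-1} - n_1)/2$) and lower-bounding $\sum_{k \ge K^*} n_k \ge n_{K^*} \ge [(\varrho-\eta) T/(\varrho+\eta) - S_{K^*-1}]/K^*$, the dependence on both $K^*$ and $S_{K^*-1}$ remarkably cancels, leaving $f \ge T \cdot \frac{(\varrho-\eta)(\varrho+\eta+1) - 2(\varrho+\eta)}{2(\varrho+\eta)} - (\varrho+\eta-1)\delta n/2$, whose leading coefficient tends to $(\varrho - 1)/2$ as $\eta \to 0$.

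In Case B, where $(\varrho+\eta) S_N < (\varrho-\eta) T$, the $\Delta$-constraint forces $x_1 + \cdots + x_N \le (\varrho+\eta) S_N$, so greedy sets $x_k = k(\varrho+\eta) n_k$ for all $k \le N$ and absorbs the remainder into $x_{N+1}$ (preferred over $x_0$ since $\alpha < 1$). Using $\sum n_k \le (n_1 + S_N)/2$ yields $f \ge T[\alpha(\varrho-\eta) - 1] + S_N \cdot [(\varrho+\eta)(1/2 - \alpha) + 1/2] - (\varrho+\eta-1) n_1/2$; depending on the sign of the $S_N$-coefficient, the worst case is either $S_N = 0$, which gives $f \ge T(\alpha\varrho - 1) - O((\delta+\eta) T)$, or $S_N$ at the Case B boundary $(\varrho-\eta) T/(\varrho+\eta)$, which after the analogous simplification reproduces the $(\varrho-1)/2$ bound from Case A.

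Combining, $f \ge T \cdot \min\{\alpha\varrho - 1,\, (\varrho-1)/2\} - O((\delta+\eta) T)$. Since $\lambda \ge \lambda_* + \varepsilon$ gives $\varrho > 1/\alpha$ with a positive $\varepsilon$-dependent gap, and $\eta < (\varrho - \alpha^{-1})/4$, both $\alpha\varrho - 1$ and $(\varrho-1)/2$ are bounded below by constants depending only on $(\alpha, \varepsilon)$; taking $\delta$ small enough absorbs the error term and yields $\delta_0 = \delta_0(\alpha, \varepsilon) > 0$ with $f \ge \delta_0 T$. The main obstacle will be the algebraic simplification in Case A, specifically verifying that the $K^*$-dependence genuinely drops out. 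This is not a coincidence but reflects the continuity of the greedy LP optimum in the ``demand'' $(\varrho-\eta) T$ across transitions between consecutive values of $K^*$; grinding through the bookkeeping carefully is essential since any slack in the Case A step would cost positivity of $\delta_0$.
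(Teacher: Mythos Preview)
Your approach is correct in substance and arrives at the right final bound $\min\{\alpha_{N+1}(\varrho-\eta)-1,\ (\varrho-1)/2 - O(\eta)\}\cdot T$, but it follows a genuinely different route from the paper. The paper never solves the LP or case-splits on $S_N$: instead it rewrites the objective via the Abel-type identity
\[
x_0+\sum_{k=1}^{N+1}\alpha_k x_k=(1-\alpha_{N+1})x_0+\alpha_{N+1}\sum_{k=0}^{N+1}x_k-\sum_{m=1}^N(\alpha_{m+1}-\alpha_m)\sum_{k=1}^m x_k,
\]
and bounds each piece separately using the constraints in $\Sigma_T$ and $\Delta_{n_1,\dots,n_N}$. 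This yields in one stroke $M\ge[\alpha_{N+1}(\varrho-\eta)-1]T-\sum_k a_k n_k$ with $a_k=(\varrho+\eta)[1-k(1-\alpha_{N+1})]-1$, valid in both of your cases simultaneously; the remaining step is just $\sum_{k\ge 2}a_k n_k\le(\max_{k\ge2}a_k/k)\sum_{k\ge2}kn_k\le(\max_{k\ge2}a_k/k)\,T$, and the max is at $k=2$. Your greedy computation recovers the same two endpoint values but via explicit optimization and the $K^*$ parameterization.

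Two small points about your write-up. First, the sentence ``the dependence on both $K^*$ and $S_{K^*-1}$ remarkably cancels'' is not literally true: after your two inequalities the $S_{K^*-1}$-coefficient is $(\varrho+\eta-1)(\tfrac{1}{K^*}-\tfrac{1}{2})$, which is nonzero for $K^*\ge3$. What actually happens (and what your continuity remark at the end correctly anticipates) is that the \emph{worst case} over $S_{K^*-1}\in[0,(\varrho-\eta)T/(\varrho+\eta))$ lands on the $K^*$-independent expression; you should phrase it that way. Second, you write $\alpha_{N+1}=\alpha$, which holds only for $\alpha<1$; when $\alpha=1$ the paper sets $\alpha_{N+1}=N/(N+1)$ and chooses $N$ so that $\alpha_{N+1}(\varrho-\eta)>1$ still holds. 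Your argument goes through with $\alpha_{N+1}$ in place of $\alpha$, but as written it misses that case.
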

	With Lemma~\ref{lem:compute-the-minimum} at hand, we see the supremum of $\Qc[\mathcal B_\sigma\mid \pi^*]$ for $\sigma\in \operatorname{S}(A,n_1,\dots,n_N)$ is bounded by
	\begin{align}
		\label{eq:union-bound-1}
		&\ \sum_{(x_0,\dots,x_{N+1})\in \Sigma_{T}\cap \Delta_{n_1,\dots,n_N}}\Qc[E_{\operatorname{s}}\geq x_0 \mbox{ and } E_{k}\ge x_k,1\le k\le N\mid \pi^*] \nonumber\\
		\stackrel{\text{independence}}{=}&\ \sum_{(x_0,\dots,x_{N+1})\in \Sigma_{T}\cap \Delta_{n_1,\dots,n_N}} \Qc[ E_{\operatorname{s}}\ge x_0\mid \pi^*]\prod_{k=1}^{N+1}\Qc[ E_k\ge x_k\mid \pi^*] \nonumber\\
		\stackrel{\text{Lemma~\ref{lem:ldp-rate}}}{\le} &\ \sum_{(x_0,\dots,x_{N+1})\in\Sigma_{T}\cap \Delta_{n_1,\dots,n_N}}\exp\left(o(n\log n)-\left[x_0+\sum_{k=1}^{N+1} \alpha_kx_k\right] \log n\right)\,.
	\end{align}
	Plugging \eqref{eq:union-bound-1} and Lemma~\ref{lem:number-of-permutations-with-many-small-cycle} into \eqref{eq:union-bound-0}, we see $\Qc[\mathcal B\mid\pi^*]$ is bounded by
	\begin{align}
		&\nonumber\sum_{T\ge c_\lambda n}\sum_{|A|=T}\sum_{n_1,\dots,n_N}\sum_{(x_0,\dots,x_{N+1})\in \Sigma_T\cap \Delta_{n_1,\dots,n_N}}\\
		&\nonumber\quad\quad\quad\exp\left(o(n\log n) -\left[\sum_{k=1}^N n_k-T+x_{0}+\sum_{k=1}^{N+1}\alpha_kx_k\right]\log n\right)\nonumber\\
		\le&\ \sum_{T\ge c_\lambda n}\binom{n}{T}T^N(\varrho n)^{N+2}\exp\left(o(n\log n)-M(T,n_1,\dots,n_N)\log n\right)\,.\label{eq:union-bound-2}
	\end{align}
By Lemma~\ref{lem:compute-the-minimum}, we see \eqref{eq:union-bound-2} is no more than 
	$$
	 \sum_{T\ge c_\lambda n}\exp(o(n\log n)-\delta_0 T\log n)=o(1)\,.
	 $$
	Thus, $\Qc[\mathcal B\mid \pi^*]=o(1)$. Since this is invariant for any $\pi^* \in \B(V, \Vs)$, we complete the proof of \eqref{eq:main_result_1}.
\end{proof}

\section{Impossibility for partial recovery}\label{sec:impossibility}

In this section we prove Theorem~\ref{thm:main}-\eqref{eq:main_result_2}.  Recall that we are now in the regime of $\lambda\le\lambda_*-\varepsilon$ for some arbitrary and fixed $\varepsilon>0$. At the first glance, this might seem trivial (as the authors have wrongly speculated) in light of Part (ii) in Proposition~\ref{prop:d(P,Q)=o(1)}. However, it turns out to be not at all obvious how to derive impossibility for correctly matching a positive fraction of vertices from impossibility of detection, since for instance we may correctly match a linear sized independent set in $\mathcal H_{\pi^*}$ but the intersection graph for this matching can be similar to that of a typical matching for independent graphs (and thus has no power for detection).  

Fix an arbitrary $\delta>0$. For a pair of graphs $(G,\G)$, denote $\Qc_{G,\G}$ for the posterior distribution of $\pi^*$ under the law $\Qc$ when $(G,\G)$ are given as observations. If there exists an estimator $\hat{\pi}=\hat{\pi}(G,\G)$ such that $\hat{\pi}$ correctly matches at least a $\delta$-fraction of vertices with non-vanishing probability, then $\Qc_{G,\G}$ must be somehow concentrate around $\hat{\pi}$ with non-vanishing probability. In light of this, for any $\tilde{\pi}\in \B(V,\Vs)$, we define
\begin{equation}
M(G,\G,\tilde\pi)=\Qc_{G,\G}[ \operatorname{overlap}(\pi^*,\tilde{\pi})\ge \delta n]=\frac{1}{\Qs[G,\G]}\sum_{\pi:\operatorname{overlap}(\pi,\tilde{\pi})\ge \delta n}\Qc[\pi,G,\G]
\end{equation}
to be the measure of $\Qc_{G,\G}$ on the set of $\pi\in\B(V,\Vs)$ which agrees with $\tilde{\pi}$ on more than $\delta n$ vertices. Further, we set
\begin{equation}
W(G,\G)=\max_{\tilde{\pi}\in \B(V,\Vs)}M(G,\G,\tilde{\pi})\,.
\end{equation}
Then the following proposition is the key to the proof of \eqref{eq:main_result_2}. 
\begin{proposition}
	\label{prop:reduce-to-posterior}
	For any $\delta>0$, we have $\mathbb{E}_{(G,\G)\sim \Qs}W(G,\G)\to 0$ as $n\to\infty$.
\end{proposition}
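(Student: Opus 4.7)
The plan is to reduce the computation from $\Qs$ to the null measure $\Ps$ via the already-established detection bound, and then analyze the resulting quantity using the structural form of the likelihood ratio together with the orbit-decomposition machinery developed in Section~\ref{subsec:prior-estimation}.

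Since $0\le W\le 1$ pointwise, Proposition~\ref{prop:d(P,Q)=o(1)}(ii) yields $|\mathbb{E}_{\Qs}[W]-\mathbb{E}_{\Ps}[W]|\le\operatorname{TV}(\Ps,\Qs)=o(1)$, so the task reduces to proving $\mathbb{E}_{\Ps}[W]=o(1)$. To control $W$ under $\Ps$, introduce the likelihood ratio $L(G,\Gs,\pi)=\Qs[G,\Gs\mid\pi]/\Ps[G,\Gs]$ (which satisfies $\mathbb{E}_{\Ps}[L(\cdot,\pi)]=1$ for each fixed $\pi$) and the normalization $Z(G,\Gs)=\Qs[G,\Gs]/\Ps[G,\Gs]$, so that
\begin{equation*}
M(G,\Gs,\tilde\pi)=\frac{1}{n!\,Z(G,\Gs)}\sum_{\pi\in\B(V,\Vs):\,\operatorname{overlap}(\pi,\tilde\pi)\ge\delta n}L(G,\Gs,\pi).
\end{equation*}
Fix a small constant $\eta>0$. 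The identity $\mathbb{E}_{\Ps}[(1-Z)_+]=\operatorname{TV}(\Ps,\Qs)=o(1)$ combined with Markov's inequality gives $\Ps[Z<1-\eta]=o(1)$, so on the good event $\mathcal G=\{Z\ge 1-\eta\}$,
\begin{equation*}
W(G,\Gs)\le\frac{1}{(1-\eta)\,n!}\max_{\tilde\pi\in\B(V,\Vs)}\sum_{\pi:\,\operatorname{overlap}(\pi,\tilde\pi)\ge\delta n}L(G,\Gs,\pi),
\end{equation*}
and the $\Ps$-expectation of the right-hand side becomes the key object to bound.

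An elementary per-edge calculation gives $L(G,\Gs,\pi)=c(G,\Gs)\,\kappa^{|\mathcal E_\pi|}$, where $\kappa=(1-2ps+ps^2)/(p(1-s)^2)=\Theta(1/p)$ and $c(G,\Gs)$ is independent of $\pi$. Writing $\mathrm{Ball}(\tilde\pi,\delta):=\{\pi\in\B(V,\Vs):\operatorname{overlap}(\pi,\tilde\pi)\ge\delta n\}$, the task reduces to bounding $\mathbb{E}_{\Ps}\bigl[\max_{\tilde\pi}\sum_{\pi\in\mathrm{Ball}(\tilde\pi,\delta)}\kappa^{|\mathcal E_\pi|}\bigr]$, up to the $\pi$-independent factor $c(G,\Gs)/n!$. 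A naive union bound over $\tilde\pi$ incurs a factor $|\B(V,\Vs)|\cdot|\mathrm{Ball}(\delta)|/n!=|\mathrm{Ball}(\delta)|=e^{\Omega(n\log(1/\delta))}$ and is therefore hopeless.

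The main obstacle — and the core of the proof — is to control this maximum, and my plan is to mirror the strategy from Section~\ref{subsec:proof-of-union-bound}: classify candidates $\tilde\pi$ by the cycle structure of $\phi=\tilde\pi^{-1}\circ\pi^*$, enumerate each class using Lemma~\ref{lem:number-of-permutations-with-many-small-cycle}, and combine with a refined large-deviation estimate — a counterpart of Lemma~\ref{lem:ldp-rate} — for the \emph{ball sum} $\sum_{\pi\in\mathrm{Ball}(\tilde\pi,\delta)}\kappa^{|\mathcal E_\pi|}$ under $\Ps$, derived via the orbit-decomposition identities of Proposition~\ref{prop:exponential-moment-of-orbit}. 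The crucial input driving the estimate is that the subcritical assumption $\lambda\le\lambda_*-\varepsilon$, via Proposition~\ref{prop:densest-subgraph}, forbids any subgraph of the intersection graph $\mathcal H_{\tilde\pi}$ from being too dense; converting this qualitative fact into a quantitative uniform tail bound that overcomes the enumeration cost of balls is where the technical work will concentrate. As in Definition~\ref{def:estimator}, I expect that a further truncation — restricting attention to $\tilde\pi$ whose $\mathcal H_{\tilde\pi}$ has no anomalously dense subgraph — will be essential to prevent first-moment blow-ups from rare dense-subgraph events, and the interplay between densest-subgraph control and short-cycle enumeration is anticipated to be the principal technical hurdle.
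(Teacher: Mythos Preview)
Your reduction to $\Ps$ via total variation and the likelihood-ratio algebra are fine, but the plan to ``mirror Section~\ref{subsec:proof-of-union-bound}'' does not go through. You propose to classify $\tilde\pi$ by the cycle structure of $\tilde\pi^{-1}\circ\pi^*$ and to invoke Proposition~\ref{prop:exponential-moment-of-orbit} and Lemma~\ref{lem:ldp-rate}; but under $\Ps$ there is no latent $\pi^*$---all $\tilde\pi$ are exchangeable---so this classification carries no information about $(G,\G)$, and those orbit moments are computed under $\Qc[\cdot\mid\pi^*]$, where the orbits encode genuine edge correlations that are absent under $\Ps$. The densest-subgraph input (Proposition~\ref{prop:densest-subgraph}) likewise concerns $\mathcal H_{\pi^*}\sim\mathbf G(n,\lambda/n)$ under $\Qc$, not $\mathcal H_{\tilde\pi}$ under $\Ps$. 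Even reinterpreting the orbits as those of $\pi^{-1}\circ\tilde\pi$ for $\pi$ ranging over the ball, you are left with a sum of order $n!/(\delta n)!$ heavily correlated likelihood terms and no mechanism to extract a large-deviation rate sharp enough to defeat the $n!$ union cost over $\tilde\pi$; your own observation that the naive union bound is ``hopeless'' is not resolved by the sketch.

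The paper's route is structurally different and supplies the missing idea. It (i) relaxes $\operatorname{overlap}(\pi,\tilde\pi)\ge\delta n$ to agreement on a \emph{sublinear} set $A$ with $|A|=K=\lfloor n^\beta\rfloor$, $\beta<1$, so the outer maximum runs over embeddings $\sigma\in\B(A,\Vs)$ rather than all of $\B(V,\Vs)$ (Lemma~\ref{lem:good-set}, Proposition~\ref{prop:further-reduction}); (ii) kills that maximum by the crude inequality $\max_\sigma f(\sigma)^2\le\sum_\sigma f(\sigma)^2$, paying an enumeration factor of order $n^{K}$ which is affordable precisely because $K=n^\beta=o(n)$; and (iii) bounds the resulting \emph{second} moment $\mathbb{E}_{\Ps}f(G,\G,A,\sigma)^2$ (Proposition~\ref{prop:second-moment-bound}) after truncating onto an admissibility event $\Gc$ for $\mathcal H_{\pi^*}$ and a ``good set'' condition on $A$ (Definitions~\ref{def:good-event} and~\ref{def:good-set}). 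Orbit analysis does reappear, but inside that second-moment computation, not as a first-moment device against a nonexistent $\pi^*$. The two ingredients you are missing are the scale reduction to $K=n^\beta$ and the max-to-sum-of-squares trick; without them a first-moment union bound over $\tilde\pi$ cannot close.
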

\begin{proof}[Proof of Theorem~\ref{thm:main}-\eqref{eq:main_result_2} assuming Proposition~\ref{prop:reduce-to-posterior}]
	 For any estimator  $\hat{\pi}=\hat \pi(G,\G)$ (perhaps with additional randomness), denote $\Pb_{G,\mathsf G}$ for the law of $\hat{\pi}$ given $(G,\G)$. Then the probability that $\operatorname{overlap}(\pi^*,\hat{\pi})\ge \delta n$ can be expressed as
	\[
	\mathbb{E}_{(G,\G)\sim \Qs}\mathbb{E}_{\hat{\pi}\sim \Pb_{G,\G}}\Qc_{G,\G}[\operatorname{overlap}(\pi^*,\hat{\pi})\ge \delta n]\le \mathbb{E}_{(G,\G)\sim \Qs}W(G,\G)\,,
	\]
	which is $o(1)$ by Proposition~\ref{prop:reduce-to-posterior}. This completes the proof.
\end{proof}
The starting point for the proof of Proposition~\ref{prop:reduce-to-posterior} is the observation that for any $K \leq \delta n$ overlapping on more than $\delta n$ vertices necessarily implies overlapping on some $A \subset V$ with $|A| = K$,
and indeed we will choose $K = n^\beta$ for $\beta$  close to $1$ but less than $1$. On the one hand, we choose $\beta$ close to 1 to avoid losing too much information; on the other hand, we choose $\beta$ strictly less than 1 since it  seems second moment computation applies better when $K$ is smaller. The second moment computation also leads to the next very simple but useful observation that $\max_{x\in S}x^2\le \sum_{x\in S}x^2$, which then allows us to upper-bound the maximum of posterior probability by its second moment. At the first glance, this bound seems really loose to be useful, but it turns out that for $K \ge n^\beta$ with some $\beta$ very close to $1$, applying this simple inequality does yield an efficient upper bound on the probability of correctly matching vertices in $A$ (see \eqref{eq-K-not-too-small}). Therefore, the main technical obstacle is now reduced to a second moment computation (see Proposition~\ref{prop:second-moment-bound}) for which we take inspiration from \cite{DD22+} and we remark here that a few additional truncations (on top of those in \cite{DD22+}) are necessary in order for our purpose. Finally, we note that in light of Proposition~\ref{prop:d(P,Q)=o(1)}, we can carry out many computations under the measure $\mathsf P$ for independent graphs, which in many ways simplifies our analysis (see e.g., Proposition~\ref{prop:reduce-to-posterior-truncation} and Proof of Proposition~\ref{prop:reduce-to-posterior} assuming Proposition~\ref{prop:reduce-to-posterior-truncation}).

\subsection{Truncations for the second moment}\label{subsec:good-event-and-good-set}

As mentioned earlier, we necessarily need to introduce truncations in order to prevent the second moment from blowing up. This truncation is expressed as some ``good'' event $\Gc$ as in Definition~\ref{def:good-event}  below, and as we will see $\Gc$ is measurable with respect to $(\pi^*,G,\G)$ and satisfies $\Qc[\Gc]\to 1$ as $n\to\infty$. Thus, it would be useful to reduce Proposition~\ref{prop:reduce-to-posterior} to a version with truncation as follows. 
\begin{proposition}\label{prop:reduce-to-posterior-truncation}
For any $\delta>0$, we have
	\begin{equation}\label{eq:reduction_2}
		\mathbb{E}_{(G,\G)\sim \Ps}\frac{1}{\Ps[G,\G]}\max_{\tilde{\pi}\in \B(V,\Vs)}\sum_{\pi:\operatorname{overlap}(\pi,\tilde{\pi})\ge \delta n}\Qc[\pi,G,\G]\mathbf{1}_\Gc\to 0,\mbox{ as }n\to\infty\,.
	\end{equation}
\end{proposition}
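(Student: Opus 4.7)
The proof proceeds by a pigeonhole reduction followed by a truncated second moment argument, in the spirit of \cite{DD22+}.

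\emph{Pigeonhole reduction.} Fix $K=n^\beta$ for some $\beta\in(0,1)$ close to $1$ (the value chosen so that the pigeonhole prefactor below is dominated by the second moment saving). The starting observation is that any $\pi$ with $\operatorname{overlap}(\pi,\tilde\pi)\ge \delta n$ agrees with $\tilde\pi$ on at least $\binom{\delta n}{K}$ subsets $A\subset V$ of size $K$, giving the pointwise inequality
\[
\mathbf{1}_{\operatorname{overlap}(\pi,\tilde\pi)\ge\delta n}\le \binom{\delta n}{K}^{-1}\sum_{|A|=K}\mathbf{1}_{\pi|_A=\tilde\pi|_A}\,.
\]
Substituting, pushing the max over $\tilde\pi$ inside the sum over $A$, and invoking $\Ps$-symmetry in $A$, the LHS of \eqref{eq:reduction_2} is bounded by
\[
\frac{\binom{n}{K}}{\binom{\delta n}{K}}\cdot \mathbb{E}_{\Ps}\frac{1}{\Ps[G,\G]}\max_{\sigma\in\B(A,\Vs)}g(A,\sigma,G,\G)\quad\text{for any fixed $A$ with $|A|=K$},
\]
where $g(A,\sigma,G,\G):=\sum_{\pi\in\B(V,\Vs,A,\sigma)}\Qc[\pi,G,\G]\mathbf{1}_\Gc$. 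The combinatorial prefactor is of order $\delta^{-K}$.

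\emph{Max to second moment.} Apply $\max_\sigma g\le(\sum_\sigma g^2)^{1/2}$ together with Cauchy--Schwarz in the sum over $(G,\G)$ (with weights $\sqrt{\Ps}$) to obtain
\[
\mathbb{E}_{\Ps}\frac{\max_\sigma g(A,\sigma)}{\Ps[G,\G]}\le\sqrt{\sum_\sigma\mathbb{E}_{\Ps}\frac{g(A,\sigma)^2}{\Ps[G,\G]^2}}\,.
\]
Expanding $g^2$, swapping the sums over $\sigma$ and $(\pi_1,\pi_2)$ with $\pi_i|_A=\sigma$, and using that the inner quantity depends only on $\phi:=\pi_1^{-1}\pi_2$ (which must fix $A$), the quantity inside the square root collapses to $\frac{1}{n!}\sum_{\phi:\phi|_A=\mathrm{id}}\widetilde M(\phi)$, where
\[
\widetilde M(\phi):=\mathbb{E}_{\Ps}\!\left[\frac{\Qc[G,\G\mid\pi_1]\Qc[G,\G\mid\pi_2]}{\Ps[G,\G]^2}\mathbf{1}_\Gc(\pi_1)\mathbf{1}_\Gc(\pi_2)\right].
\]

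\emph{Pairwise second moment and closing.} The families $\{(G_e,\G_{\Pi_i(e)}):e\in O\}$ are independent across edge-orbits $O$ of $\phi$ under each of $\Ps$, $\Qc[\cdot\mid\pi_1]$, $\Qc[\cdot\mid\pi_2]$, so $\widetilde M(\phi)$ factorizes (before truncation) across orbits, with each local factor close to $1$ and the ``dangerous'' contributions coming from short orbits where $\Pi_1,\Pi_2$ coincide (i.e.\ from fixed vertices and transpositions of $\phi$). The good event $\Gc$ is designed to excise the rare large-deviation events (chiefly, a subgraph of $\mathcal H_{\pi^*}$ with density exceeding $\varrho(\lambda)+o(1)$) that let these factors blow up. In the subcritical regime $\lambda\le\lambda_*-\varepsilon$ an orbit-by-orbit analysis in the spirit of Section~\ref{subsec:prior-estimation} and of \cite{DD22+} then gives $\widetilde M(\phi)\le C$ uniformly on the support of $\mathbf{1}_\Gc(\pi_1)\mathbf{1}_\Gc(\pi_2)$, so $\sum_{\phi:\phi|_A=\mathrm{id}}\widetilde M(\phi)/n!\le C(n-K)!/n!\asymp Cn^{-K}$. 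Plugging back, the LHS of \eqref{eq:reduction_2} is at most
\[
\delta^{-K}\sqrt{C n^{-K}}=\exp\!\Bigl(K\log(1/\delta)-\tfrac12 K\log n+O(K)\Bigr)=o(1),
\]
since $K=n^\beta\to\infty$ and $\tfrac12\log n$ eventually dominates $\log(1/\delta)$.

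\emph{Main obstacle.} The crux is the uniform bound $\widetilde M(\phi)\le C$. The detection argument in \cite{DD22+} only needed to control a single global second moment, whereas here we must control the pairwise truncated likelihood simultaneously for every $\phi$ fixing $A$, in particular for those with many short cycles (which is precisely the regime where $\pi_1,\pi_2$ share many edge pairs of the intersection graph and where the second moment threatens to blow up). The truncation $\Gc$ therefore must be strengthened beyond that in \cite{DD22+} so as to kill \emph{pairwise} dense sub-configurations, while still satisfying $\Qc[\Gc]=1-o(1)$. Proposition~\ref{prop:d(P,Q)=o(1)} is convenient throughout, as it legitimizes carrying out all these computations under $\Ps$, where edges are independent and the orbit factorization is clean, and then transferring the conclusion back to $\Qs$ with only $o(1)$ loss.
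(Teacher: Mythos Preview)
Your high-level architecture (pigeonhole to a $K$-set, replace $\max$ by a second moment, work under $\Ps$) matches the paper's, but the claimed uniform bound $\widetilde M(\phi)\le C$ is false and this is where the argument breaks. Take $\phi=\mathrm{id}$ (which is one of the $\phi$'s with $\phi|_A=\mathrm{id}$). Then
\[
\widetilde M(\mathrm{id})=\mathbb{E}_{\Ps}\bigl[L_{\pi}^2\,\mathbf 1_{\Gc}(\pi)\bigr]=\mathbb{E}_{\Qc[\cdot\mid\pi]}\bigl[L_{\pi}\,\mathbf 1_{\Gc}(\pi)\bigr],
\]
where $L_\pi=\Qc[G,\G\mid\pi]/\Ps[G,\G]=P^{|\mathcal E_\pi|}Q^{|E|+|\Es|}R^{\binom{n}{2}}$. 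Under $\Qc[\cdot\mid\pi]$ the event $\Gc(\pi)$ has probability $1-o(1)$ (that is the whole point of the truncation), while on this event $|\mathcal E_\pi|$ is of order $\lambda n/2$ and hence $L_\pi\ge n^{(\alpha\lambda/2-o(1))n}$. Thus $\widetilde M(\mathrm{id})\ge n^{(\alpha\lambda/2-o(1))n}$, and no strengthening of $\Gc$ with $\Qc[\Gc]=1-o(1)$ can bring this down to a constant: any such $\Gc$ must contain the $\Qc$-typical realizations, and these already make $L_\pi$ huge. Consequently your closing line $\sum_{\phi}\widetilde M(\phi)/n!\le C(n-K)!/n!$ is off by a factor that is exponential in $n\log n$, and the final arithmetic does not go through.

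The paper never attempts a pointwise bound on $\widetilde M(\phi)$. Instead it (a) refines the pigeonhole step so that the $K$-set $A$ is a \emph{good set} in $\mathcal H_{\pi}$ (Definition~\ref{def:good-set}, Lemma~\ref{lem:good-set}), i.e.\ its vertices are pairwise far apart and away from short cycles; (b) inserts the corresponding indicator into $f$ and then bounds the summed second moment $\mathbb{E}_{\Ps}f(G,\G,A,\sigma)^2$ directly (Proposition~\ref{prop:second-moment-bound}), splitting into a within-$A$ part controlled by the density cap $|(\mathcal E_\sigma)_A|\le\zeta K$ and an outside-$A$ part controlled via the good-set geometry (this is where conditions (iii)--(v) of admissibility and the good-set distance constraints are actually used, cf.\ Lemmas~\ref{lem-apply-good-set} and~\ref{lem:apply-good-set-2}). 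The resulting bound is $((n-K)!)^2\exp(\zeta K\log K+\zeta(\alpha-1)K\log n+o(K\log n))$, i.e.\ it grows like $\exp(\Theta(K\log n))$ rather than being $O(1)$; the choice of $\beta$ in \eqref{eq:beta} is precisely what makes the combinatorial prefactors beat this growth. So the missing ingredient in your sketch is not a sharper version of the detection truncation but the good-set reduction, without which the second moment on the $K$-fixed pairs is uncontrollable.
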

\begin{proof}[Proof of Proposition~\ref{prop:reduce-to-posterior} assuming Proposition~\ref{prop:reduce-to-posterior-truncation}]
	For any $\tilde{\pi}\in\B(V,\Vs)$, we can upper-bound $M(G,\G,\tilde{\pi})$ by 
	\begin{equation*}
		\frac{1}{\Qs[G,\G]}\sum_{\pi:\operatorname{overlap}(\pi,\tilde{\pi})\ge \delta n}\Qc[\pi,G,\G]\mathbf{1}_{\Gc}+\frac{\Qc[\Gc^c,G,\G]}{\Qs[G,\G]}\,,
	\end{equation*}
	where $\Qc[\Gc^c,G,\G] = \sum_\pi \Qc[\pi, G, \G]\mathbf 1_{\Gc^c}(\pi, G, \G)$.
This yields that
	\begin{equation}
		\label{eq:reduction_1}
		\mathbb{E}_{(G,\G)\sim \Qs}W(G,\G)\le \frac{1}{\Qs[G,\G]}\max_{\tilde{\pi}\in \B(V,\Vs)}\sum_{\pi:\operatorname{overlap}(\pi,\tilde{\pi})\ge\delta n}\Qc[\pi,G,\G]\mathbf{1}_\Gc+\Qc[\Gc^c]\,.
	\end{equation}
	
	Recall the definition of $\Ps$ and the fact that $\operatorname{TV}(\Ps,\Qs)=o(1)$ given in Proposition~\ref{prop:d(P,Q)=o(1)}-(ii). Define the event $\Gc_0$ as
	\[
	\Gc_0=\left\{(G,\G):\frac{\Qs[G,\G]}{\Ps[G,\G]}\ge \frac{1}{2}\right\}\,.
	\]
	Since $(G,\G)\in \Gc_0^c$ implies $\Qs[G,\G]\le \Ps[G,\G]-\Qs[G,\G]$, we have that 
	\begin{equation}
		\label{eq:Gc_0-is-o(1)}
			\Qs[\Gc_0^c]\le\int_{\Gc_0^c}\left(\dif \Ps-\dif \Qs\right)\le \operatorname{TV}(\Ps,\Qs)= o(1)\,.
	\end{equation}

	Since $\frac{1}{\Qs[G,\G]}\sum_{\pi:\operatorname{overlap}(\pi,\tilde{\pi})\ge \delta n}\Qc[\pi,G,\G]\le 1$ holds for any $\tilde{\pi}\in\B(V,\Vs)$, we can upper-bound the right hand side of \eqref{eq:reduction_1} by
	\begin{equation*}
		\begin{aligned}
			&\ \mathbb{E}_{(G,\G)\sim \Qs}\frac{\mathbf 1_{\Gc_0}(G,\G)}{\Qs[G,\G]}\max_{\tilde{\pi}\in \B(V,\Vs)}\sum_{\pi:\operatorname{overlap}(\pi,\tilde{\pi})\ge \delta n}\Qc[\pi,G,\G]\mathbf{1}_{\Gc}+\Qs[\Gc_0^c]+\Qc[\Gc^c]\\
			\le &\ \mathbb{E}_{(G,\G)\sim \Qs}\frac{2}{\Ps[G,\G]}\max_{\tilde{\pi}\in\B(V,\Vs)}\sum_{\pi:\operatorname{overlap}(\pi,\tilde{\pi})\ge \delta n}\Qc[\pi,G,\G]\mathbf{1}_{\Gc}+\Qs[\Gc_0^c]+\Qc[\Gc^c]\\
			\le &\ \mathbb{E}_{(G,\G)\sim \Ps}\frac{2}{\Ps[G,\G]}\max_{\tilde{\pi}\in \B(V,\Vs)}\sum_{\pi:\operatorname{overlap}(\pi,\tilde{\pi})\ge \delta n}\Qc[\pi,G,\G]\mathbf{1}_{\Gc}+2\operatorname{TV}(\Ps,\Qs)+\Qs[\Gc_0^c]+\Qc[\Gc^c]\,,
		\end{aligned}
	\end{equation*}
which is $o(1)$ by Proposition~\ref{prop:reduce-to-posterior-truncation}, \eqref{eq:Gc_0-is-o(1)} and the choice of $\Gc$. This concludes the proof.
\end{proof}

Next, we give the precise definition of the good event $\Gc$ and further introduce the concept of good set. These definitions may be a bit perplexing at the first glance, but the motivations behind all the constraints would become clear in
Section~\ref{sec:proof-of-second-moment-bound} (a reader may skip this definition for now and reference back when certain constraints are used in controlling moments in later proofs). 
We first introduce some constants $\xi,\zeta,\beta,C,\delta_1$. Denote 
\begin{equation}\label{eq:xi}
	\xi=\frac{\varrho+\alpha^{-1}}{2}\,,
\end{equation} 
where $\varrho$ is short for $\varrho(\lambda)$. Note that under the assumption  $\lambda\le\lambda^*-\varepsilon$, we have $\xi<\alpha^{-1}$. Since $\alpha < 1$, it is possible to take a constant $\zeta>1$ such that
\begin{equation}\label{eq:zeta}
	1+\zeta(\alpha-1)<2-\zeta\,.
\end{equation} 
 Then, we choose $\beta$ such that 
\begin{equation}\label{eq:beta}
(1-\alpha)\vee\frac{1+\zeta(\alpha-1)}{2-\zeta}<\beta<1\,,
\end{equation}
and some large integer $C$ such that 
\begin{equation}\label{eq:C}
	\alpha(\xi+C^{-1})<1\,.
\end{equation} 
Finally, we pick $\delta_1>0$ such that
\begin{equation}\label{eq:delta}
	\delta_1<(1-\alpha\xi)\wedge C^{-1}\beta\,.
\end{equation} 
\begin{DEF}\label{def:good-event}
	For a graph $\mathcal{H}=(V,\mathcal E)$, we say $\mathcal H$ is \emph{admissible} if $\mathcal H$ satisfies the following properties:
	\begin{enumerate}[(i)]
		\item For any $A\subset V$, it holds
		$
	    |\mathcal{E}(A)|\le \xi|A|
		$;
	    \item For any $A\subset V$ satisfying $|A|\le n/\log n$, it holds
	    $
	    	|\mathcal{E}(A)|\le \zeta |A|
	    $;
		\item The maximal degree of $\mathcal H$ is less than $\log n$;
		\item Any connected subgraph of $\mathcal H$ with size less than $\log \log n$ contains at most one cycle; 
		\item For any $k\ge 3$, the number of cycles with length $k$ in $\mathcal H$ is bounded by $n^{\delta_1 k}$.
	\end{enumerate}
Let $\Gc$ be the event that $\mathcal H_{\pi^*}$, the $\pi^*$-intersection graph of $G$ and $\G$, is admissible.
\end{DEF}
It is clear that $\Gc$ is measurable with respect to the triple $(\pi^*,G,\G)$. We next show that $\Gc$ is indeed a typical event under $\Qc$.
\begin{lemma}
	\label{lem:G-is-a.s.s.}
	$\Qc[\Gc]\to 1$ as $n\to\infty$.
\end{lemma}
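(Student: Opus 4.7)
The plan is to reduce to a single Erd\H os--R\'enyi graph and then check each of the five admissibility conditions (i)--(v) separately via first-moment estimates. The key reduction is the observation that, conditionally on any realization of $\pi^*$, the intersection graph $\mathcal H_{\pi^*}$ has the law of $\mathbf{G}(n,ps^2)=\mathbf{G}(n,\lambda/n)$: indeed, by \eqref{eq:def-of-G_e} each potential edge $(u,v)$ lies in $\mathcal H_{\pi^*}$ with probability $ps^2$ independently across pairs. Since admissibility depends only on $\mathcal H_{\pi^*}$, it suffices to show that $\mathbf{G}(n,\lambda/n)$ satisfies each of (i)--(v) with probability $1-o(1)$, and then apply a union bound over the five events.

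Conditions (i) and (iii) are essentially one-liners. For (i), the hypothesis $\lambda\le\lambda_*-\varepsilon$ combined with the strict monotonicity of $\varrho$ gives $\varrho(\lambda)<\varrho(\lambda_*)=\alpha^{-1}$, so $\xi=\tfrac12(\varrho(\lambda)+\alpha^{-1})$ strictly exceeds $\varrho(\lambda)$, and Proposition~\ref{prop:densest-subgraph} directly yields $\max_{\emptyset\neq U\subset V}|\mathcal E(U)|/|U|<\xi$ whp. For (iii), $\Pb(\deg v\ge \lceil\log n\rceil)\le \binom{n-1}{\lceil\log n\rceil}(\lambda/n)^{\lceil\log n\rceil}\le (e\lambda/\log n)^{\log n}=n^{-\omega(1)}$, so a union bound over $n$ vertices is $o(1)$.

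The remaining conditions all reduce to a first-moment (Markov) estimate. For (ii), I would union-bound over $A\subset V$ with $|A|=a\le n/\log n$ and at least $\lceil\zeta a\rceil$ edges in $A$; routine binomial estimates give
\begin{equation*}
\binom{n}{a}\binom{\binom{a}{2}}{\lceil \zeta a\rceil}\bigl(\lambda/n\bigr)^{\lceil \zeta a\rceil}\le \bigl(C_\lambda(a/n)^{\zeta-1}\bigr)^a\le \bigl(C_\lambda(\log n)^{1-\zeta}\bigr)^a,
\end{equation*}
which, since $\zeta>1$, is a summable geometric series giving $o(1)$. For (iv), the expected number of $L$-vertex subsets inducing at least $L+1$ edges is, after applying $\binom{n}{L}\le n^L/L!$ and $\binom{\binom{L}{2}}{L+1}\le (eL^2/(2(L+1)))^{L+1}$, at most $(C_\lambda L)^{L+1}/(L!\,n)$; for $L\le\log\log n$ this is polylogarithmic over $n$, and summing over such $L$ keeps the total $o(1)$. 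Any connected subgraph with $L$ vertices and at least two independent cycles forces its vertex set to induce $\ge L+1$ edges, so this dominates the bad event. For (v), the expected number of $k$-cycles in $\mathbf{G}(n,\lambda/n)$ is at most $\lambda^k/(2k)$; Markov's inequality gives $\Pb(\text{\#}\,k\text{-cycles}\ge n^{\delta_1 k})\le (\lambda/n^{\delta_1})^k/(2k)$, and since $\lambda=n^{o(1)}$ and $\delta_1>0$, the sum over $k\ge 3$ is again $o(1)$.

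I would identify (ii) as the main obstacle. The estimate crucially uses both $\zeta>1$ (to obtain geometric decay in $a$) and the size restriction $|A|\le n/\log n$ (so that the factor $(a/n)^{\zeta-1}$ gives decay uniformly in $a$); neither alone suffices. The existence of a usable $\zeta>1$ satisfying \eqref{eq:zeta} in turn leverages $\alpha<1$, which is consistent with the assumption of Theorem~\ref{thm:main}-\eqref{eq:main_result_2}. The other four conditions are standard random-graph facts.
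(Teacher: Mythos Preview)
Your proposal is correct and follows essentially the same route as the paper: reduce to $\mathbf G(n,\lambda/n)$ via the conditional law of $\mathcal H_{\pi^*}$, then verify (i)--(v) one at a time, with (i) coming from Proposition~\ref{prop:densest-subgraph}, (ii) from a union bound over subsets exploiting $\zeta>1$ and the size cap $|A|\le n/\log n$, (iii)--(iv) from standard first-moment bounds (the paper simply cites these as well-known), and (v) from Markov on the cycle count. Your added remarks on why $\xi>\varrho(\lambda)$ and why both $\zeta>1$ and the size restriction are needed in (ii) are accurate and match the paper's reasoning.
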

\begin{proof}
	Note that $\mathcal H_{\pi^*}$ has the law of an Erd\H{o}s-R\'enyi graph $\mathbf G(n,\frac\lambda n)$, it suffice to bound the probability that either of (i)-(v) fails for such an Erd\H{o}s-R\'enyi graph. $\Pb[(i) \mbox{ fails}]=o(1)$ follows from Proposition~\ref{prop:densest-subgraph}. For (ii), since $\zeta>1$, we can take a union bound as (denoting by $\mathbf B(m, q)$ as a binomial variable with $m$ trials and success probability $q$)
	\begin{equation*}
		\begin{aligned}
		\Pb[(ii)\mbox{ fails}]\le&\ \sum_{k\le n/\log n}\binom{n}{k}\Pb\left[\mathbf B\left(\binom{k}{2},\frac{\lambda}{n}\right)>\zeta k\right]\\
		\le &\ \sum_{k\le n/\log n}\binom{n}{k}\exp\left[-\zeta k\log\left( \frac{\zeta k}{\binom{k}{2}\frac{\lambda}{n}}\right)+\zeta k\right]\\
		=&\ \sum_{k\le n/\log n}\exp\left[(1-\zeta)k\log\left(\frac{n}{k}\right)+O(k)\right]=o(1)\,.
		\end{aligned}
	\end{equation*} 
$\P[(iii) \mbox{ fails}]=o(1)$ and $\P[(iv)\mbox{ fails}]=o(1)$ are well-known: indeed, the typical value of maximal degree of a $\mathbf G(n,\frac\lambda n)$ graph is of order $\log n/\log \log n$ (see e.g. \cite[Theorem 3.4]{AM22}), and the typical value for the minimal size of connected subgraphs containing more than one cycles is of order $\log n$ (the upper-bound follows readily from a union bound). Finally, for (v), since the expected number of $k$-cycles in a $\mathbf G(n, \frac\lambda n)$ graph is bounded by $\lambda^k$ for any $k\ge 3$, by Markov inequality we get
\[
\Pb[(v)\mbox{ fails}]\le \sum_{k=3}^{\infty}\frac{\lambda^k}{n^{\delta_1 k}}=o(1)\,.
\]
Altogether, we conclude the lemma.
\end{proof}
To make another layer of truncation, we introduce the concept of \emph{good set}: on the one hand, good set is abundant as shown in Lemma~\ref{lem:good-set}; on the other hand, good set will help controlling both the enumeration of embeddings as in Lemma~\ref{lem-apply-good-set} and the number of edges in certain subgraphs as in Lemma~\ref{lem:apply-good-set-2}.
\begin{DEF}\label{def:good-set}
	For a graph $\mathcal H=(V,\mathcal E)$, let $\operatorname{d}_\mathcal{H}$ be the graph distance on $\mathcal H$. We say a set $A\subset V$ is a \emph{good set} in $\mathcal H$, if it satisfies the following two properties:
	\begin{itemize}
	    \item For any two vertices $u,v\in A$, $\operatorname{d}_\mathcal{H}(u,v)>2C+2$. 
		\item For any vertex $w\in A$ and any $k$-cycle $\mathcal C$ in $\mathcal H$ with $k\le C$, we  have $\operatorname{d}_\mathcal{H}(w,\mathcal C)>C$.
	\end{itemize}
\end{DEF}
Denote $K=\lfloor n^\beta\rfloor$. The following lemma allows us to restrict our consideration on good sets with size $K$ in later discussions.
\begin{lemma}\label{lem:good-set}
  When $n$ is large enough, for any triple $(\pi^*,G,\G)\in \Gc$ and any subset $B\subset V$ with $|B|\ge \delta n$, there exists $A \subset B$ with $|A|=K$ such that $A$ is a good set in $\mathcal H_{\pi^*}$. In other words, for any $B\subset V$ with $|B|\ge \delta n$,
  \begin{equation}\label{eq:good-set}
  	\mathbf{1}_\Gc\le \sum_{A\subset B,|A|=K}\mathbf{1}_\Gc\mathbf{1}_{\{A\mbox{ is a good set in }\mathcal H_{\pi^*}\}}\,.
  \end{equation}
\end{lemma}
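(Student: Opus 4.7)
The plan is to construct $A$ inside $B$ by a two-stage procedure and then count. First I would pre-filter $B$ to the subset $B^\circ$ of vertices already at $\operatorname{d}_{\mathcal H_{\pi^*}}$-distance greater than $C$ from every cycle of $\mathcal H_{\pi^*}$ of length $\le C$ (so that the second condition in Definition~\ref{def:good-set} is automatically satisfied for any subset of $B^\circ$). Then I would greedily pick vertices inside $B^\circ$ with pairwise distance exceeding $2C+2$, each time deleting from $B^\circ$ the closed ball of radius $2C+2$ around the chosen vertex so that no future pick violates the first condition.

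For the first stage, property (v) of admissibility bounds the number of $k$-cycles in $\mathcal H_{\pi^*}$ by $n^{\delta_1 k}$ for each $3\le k\le C$, while property (iii) bounds the maximum degree by $\log n$, so the $C$-neighborhood of any cycle of length $k\le C$ contains at most $k(\log n)^C=O((\log n)^C)$ vertices. Summing over short cycles, the number of vertices within distance $C$ of some cycle of length $\le C$ is at most $O(n^{\delta_1 C}(\log n)^C)$. Since \eqref{eq:delta} guarantees $\delta_1 C<\beta<1$, this quantity is $o(n)$, so $|B^\circ|\ge \delta n-o(n)\ge \delta n/2$ for large $n$.

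For the second stage, property (iii) again yields $|\{u:\operatorname{d}_{\mathcal H_{\pi^*}}(u,v)\le 2C+2\}|=O((\log n)^{2C+2})$ for every $v$. Each greedy step therefore removes only $O((\log n)^{2C+2})$ vertices from $B^\circ$, and the procedure continues for at least $|B^\circ|/O((\log n)^{2C+2})=\Omega(n/(\log n)^{2C+2})$ steps. Since $K=\lfloor n^\beta\rfloor$ with $\beta<1$ fixed, this count dominates $K$, and the first $K$ picked vertices form a good set $A\subset B$ with $|A|=K$, establishing \eqref{eq:good-set}.

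There is no genuine obstacle here beyond bookkeeping: the constants $C$, $\delta_1$, $\beta$ were chosen in \eqref{eq:C} and \eqref{eq:delta} precisely so that $\delta_1 C<\beta<1$, which makes both the vertex count from $C$-neighborhoods of short cycles and the vertex count from $(2C+2)$-balls negligible compared to $\delta n$ and to $n^\beta$ respectively.
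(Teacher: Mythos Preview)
Your argument is correct and is essentially the same as the paper's: the paper takes a maximal good subset $A\subset B$ and shows $B\subset B_0\cup\bigcup_{v\in A}R_v$ where $B_0$ is exactly your $B\setminus B^\circ$ and $R_v$ is the closed $(2C+2)$-ball around $v$, then uses the identical degree bound (iii) and cycle-count bound (v) to conclude $|A|\ge K$. Your greedy construction is just the standard unwinding of that maximality argument, and the key numerical inputs ($|R_v|\le(\log n)^{2C+2}$, $|B\setminus B^\circ|=O(n^{\delta_1 C}(\log n)^C)=o(n)$ via $\delta_1 C<\beta<1$) coincide.
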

\begin{proof}
Let $A$ be a maximal good set that is contained in $B$. It suffices to show that $|A| \geq K$ (since if $|A|>K $ we can then take a subset of $A$ of cardinality $K$ which will be a good set). Let $B_0 \subset B$ be the collection of vertices $v\in B$ satisfying $\operatorname{d}_{\mathcal{H}_{\pi^*}}(v,\mathcal C)\le C$ for some $k$-cycle $\mathcal C$ in $\mathcal H_{\pi^*}$ with $k\le C$. For $v\in B$, let $R_v$ be the collection of all vertices $u\in B$ such that $\operatorname{d}_\mathcal{H}(u,v)\leq 2C+2$. On the event $\mathcal G$, we have that the maximal degree of $\mathcal H_{\pi^*}$ is bounded by $\log n$, and the number of $k$-cycles in $\mathcal H_{\pi^*}$ is bounded by $n^{\delta_1 k}$. Therefore,
\begin{equation}\label{eq-admissibility-consequence-good-set}
|B_0| \leq \sum_{k=1}^Cn^{\delta_1 k}k(\log n)^C \mbox{ and } |R_v| \leq (\log n)^{2C+2} \mbox{ for } v\in V.
\end{equation} 
Note that $B$ must be contained in
$\cup_{v\in A} R_v \cup B_0$, since otherwise one can add a vertex from $B\setminus (\cup_{v\in A}R_v\cup B_0)$ to $A$ which yields a larger good set and thus contradicts to the maximality of $A$. Combined with the assumption $|B|\ge \delta n$, the choices for $\beta, \delta_1$ and \eqref{eq-admissibility-consequence-good-set}, this implies that $|A| \geq K$ for large $n$.
\end{proof}

\subsection{Proof of Proposition~\ref{prop:reduce-to-posterior-truncation} via second moment bound}\label{subsec:proof-via-second-moment-bound}

Denote the edge likelihood ratio function as
\begin{equation*}
	\label{eq:ell}
	\ell(x,y)\stackrel{\operatorname{def}}{=}\frac{\Qc[(G_e,\G_{\Pi^*(e)})=(x,y)]}{\Ps[G_e=x]\Ps[{\G_{\Pi^*(e)}}=y]}=
	\begin{cases}
		\frac{1-2ps+ps^2}{(1-ps)^2},\ &x=y=0\,;\\
		\frac{1-s}{1-ps},\quad &x=1,y=0\text{ or }x=0,y=1\,;\\
		\frac{1}{p},&x=y=1\,.
	\end{cases}
\end{equation*}
Further, write
\begin{equation}
	\label{eq:PQR}
	P=\frac{(1-2ps+ps^2)}{p(1-s)^2}\,,\quad Q=\frac{(1-s)(1-ps)}{1-2ps+ps^2}\,,\quad R=\frac{1-2ps+ps^2}{(1-ps)^2}\,.
\end{equation}
Then it is straightforward to check that
\begin{equation}
	\label{eq:relation-dQ/dP-PQR}
	\frac{\Qc[\pi,G,\mathsf G]}{\Ps[G,\mathsf G]}=\frac{1}{n!}\frac{\Qc[G,\G\mid \pi^*=\pi]}{\Ps[G,\G]}=\frac{1}{n!}\prod_{e\in E_0}\ell(G_e,\G_{\Pi(e)})= \frac{P^{|\mathcal E_\pi|}Q^{|E|+|\mathsf E|}R^{\binom{n}{2}}}{n!}\,,
\end{equation}
where $\mathcal E_\pi$ is the set of edges in the $\pi$-intersection graph $\mathcal H_\pi$.

For any $A\subset V$ with $|A|=K$ and any $\sigma\in \B(A,\Vs)$, define
	\begin{align}
			&f(G,\G,A,\sigma)= \sum_{\pi\in \B(V,\Vs,A,\sigma)}P^{|\mathcal E_\pi|}Q^{|E|+|\Es|}R^{\binom{n}{2}}\mathbf{1}_{\Gc}\mathbf{1}_{\{A \mbox{ is good in }\mathcal H_{\pi}\}}	\label{def:f}\\
			=&\ P^{|(\mathcal E_\sigma)_A|}Q^{|E_A|+|\Es_\As|}R^{\binom{K}{2}}\sum_{\pi\in \B(V,\Vs,A,\sigma)}P^{|(\mathcal E_\pi)^{A}|}Q^{|E^A|+|\Es^\As|}R^{\binom{n}{2}-\binom{K}{2}}\mathbf{1}_{\Gc}\mathbf{1}_{\{A \mbox{ is good in }\mathcal H_{\pi}\}}\,,\nonumber
	\end{align}
where $(\mathcal E_\sigma)_A$ is the edge set of $(\mathcal H_\pi)_A$ for any $\pi\in \B(V,\Vs,A,\sigma)$ (note that this is well-defined). In addition, we set
\begin{equation}
	\label{def:g}
	g(G,\G,A)=\max_{\sigma\in \B(A,\Vs)}f(G,\G,A,\sigma)\,.
\end{equation}
\begin{proposition}
	\label{prop:further-reduction}
	For any $\tilde{\pi}\in\B(V,\Vs)$, we have 
	\begin{equation}\label{eq:bound-by-f}
		\sum_{\pi:\operatorname{overlap}(\pi,\tilde{\pi})\ge \delta n}\frac{\Qc[\pi,G,\G]\mathbf{1}_\Gc}{\Ps[G,\G]}\le \frac{1}{n!}\sum_{A\subset V,|A|=K}f(G,\G,A,\tilde{\pi}|_A)\,.
	\end{equation}
Therefore, the left hand side of \eqref{eq:reduction_2} is bounded by
\begin{equation}\label{eq:bound-by-g}
	\mathbb{E}_{(G,\G)\sim \Ps}\frac{1}{n!}\sum_{A\subset V,|A|=K}g(G,\G,A)\\,.
\end{equation}
\end{proposition}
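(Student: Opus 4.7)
The plan is to use the explicit likelihood-ratio identity \eqref{eq:relation-dQ/dP-PQR} together with the covering statement of Lemma~\ref{lem:good-set} to convert the sum over matchings $\pi$ with $\operatorname{overlap}(\pi,\tilde\pi)\ge \delta n$ into a double sum over small ``anchor'' sets $A$ of size $K$ and over $\pi\in\B(V,\Vs,A,\tilde\pi|_A)$, at which point the quantities $f$ and $g$ appear by definition. The key observation is that in each summand $\Qc[\pi,G,\G]$ appearing on the left-hand side of \eqref{eq:bound-by-f} we have $\pi^*=\pi$, so $\mathcal H_{\pi^*}=\mathcal H_\pi$ in that summand, and Lemma~\ref{lem:good-set} applied to the agreement set $B_\pi=\{v\in V:\pi(v)=\tilde\pi(v)\}$ produces at least one $K$-set $A\subset B_\pi$ that is good in $\mathcal H_\pi$. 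Swapping the two summations then rearranges everything into the desired form.

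More concretely, for the first inequality I would apply \eqref{eq:relation-dQ/dP-PQR} to rewrite the left-hand side of \eqref{eq:bound-by-f} as
\begin{equation*}
\frac{1}{n!}\sum_{\pi:\operatorname{overlap}(\pi,\tilde\pi)\ge\delta n}P^{|\mathcal E_\pi|}Q^{|E|+|\Es|}R^{\binom{n}{2}}\mathbf{1}_\Gc\,.
\end{equation*}
For every $\pi$ in this sum the set $B_\pi$ has cardinality at least $\delta n$, so Lemma~\ref{lem:good-set} (applied to $\mathcal H_{\pi^*}=\mathcal H_\pi$) gives
\begin{equation*}
\mathbf{1}_\Gc\le\sum_{A\subset B_\pi,\,|A|=K}\mathbf{1}_\Gc\,\mathbf{1}_{\{A\text{ is a good set in }\mathcal H_\pi\}}\,.
\end{equation*}
Substituting this bound and interchanging the order of summation, the constraint $A\subset B_\pi$ becomes $\pi|_A=\tilde\pi|_A$, i.e.\ $\pi\in\B(V,\Vs,A,\tilde\pi|_A)$; enlarging the inner sum further by dropping the remaining $\operatorname{overlap}(\pi,\tilde\pi)\ge\delta n$ restriction yields the upper bound $\frac{1}{n!}\sum_{A\subset V,\,|A|=K}f(G,\G,A,\tilde\pi|_A)$, which is exactly \eqref{eq:bound-by-f}.

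For \eqref{eq:bound-by-g} the argument is essentially formal: since $g(G,\G,A)=\max_{\sigma\in\B(A,\Vs)}f(G,\G,A,\sigma)$ does not depend on $\tilde\pi$, the pointwise bound $f(G,\G,A,\tilde\pi|_A)\le g(G,\G,A)$ promotes \eqref{eq:bound-by-f} to an inequality whose right-hand side is uniform in $\tilde\pi$, so taking the maximum over $\tilde\pi\in\B(V,\Vs)$ preserves it; integrating against $\Ps$ then recovers \eqref{eq:bound-by-g}. There is no real obstacle here beyond careful bookkeeping. The one subtlety worth flagging is the identification $\mathcal H_{\pi^*}=\mathcal H_\pi$ inside each summand of $\sum_\pi\Qc[\pi,G,\G]$, which is precisely what reconciles the ``good in $\mathcal H_{\pi^*}$'' conclusion of Lemma~\ref{lem:good-set} with the ``good in $\mathcal H_\pi$'' indicator built into the definition of $f$.
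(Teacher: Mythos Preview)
Your proposal is correct and follows essentially the same argument as the paper: both use Lemma~\ref{lem:good-set} on the agreement set $B_\pi=F(\pi^{-1}\circ\tilde\pi)$, swap the summations so that $A\subset B_\pi$ becomes $\pi\in\B(V,\Vs,A,\tilde\pi|_A)$, and then invoke \eqref{eq:relation-dQ/dP-PQR} together with the definition of $f$; the only cosmetic difference is that you apply \eqref{eq:relation-dQ/dP-PQR} at the outset rather than at the end. Your explicit flag that $\pi^*=\pi$ in each summand (so that $\mathcal H_{\pi^*}=\mathcal H_\pi$ and Lemma~\ref{lem:good-set} produces a set good in $\mathcal H_\pi$, matching the indicator in $f$) is exactly the right justification.
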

\begin{proof}
	For any $\phi\in S_V$, denote $F(\phi)$ for the set of vertices in $V$ fixed by $\phi$. Fix some $\tilde{\pi}\in\B(V,\Vs)$ and for any $\pi\in \B(V,\Vs)$ with $\operatorname{overlap}(\pi,\tilde{\pi})\ge\delta n$, apply Lemma~\ref{lem:good-set} to the set $B=F(\pi^{-1}\circ\tilde{\pi})$ and then sum over all such $\pi$, we get that the left hand side of \eqref{eq:bound-by-f} is upper-bounded by
	\begin{equation*}
		\begin{aligned}
			&\sum_{\pi:\operatorname{overlap}(\pi,\tilde{\pi})\ge \delta n}\sum_{A\subset F(\pi^{-1}\circ\tilde{\pi}), |A|=K}\frac{\Qc[\pi,G,\G]\mathbf{1}_\Gc\mathbf{1}_{\{A \mbox{ is good in }\mathcal H_\pi\}}}{\Ps[G,\G]}\\
			\le&\ \sum_{A\subset V,|A|=K}\sum_{\pi\in \B(V,\Vs,A,\tilde{\pi}|_A)}\frac{\Qc[\pi,G,\G]\mathbf{1}_{\Gc}\mathbf{1}_{\{A\mbox{ is good in }\mathcal H_\pi\}}}{\Ps[G,\G]}
			\\
			 =&\ \frac{1}{n!}\sum_{A\subset V,|A|=K}f(G,\G,A,\tilde{\pi}|_A)\,,
		\end{aligned}
	\end{equation*}
where the last equality follows from \eqref{eq:relation-dQ/dP-PQR} and the definition of $f(G,\G,A,\sigma)$ in \eqref{def:f}. This proves \eqref{eq:bound-by-f}, and \eqref{eq:bound-by-g} follows immediately from the definition of $g(G,\G,A)$ in \eqref{def:g}.
\end{proof}
The main technical input for deriving Proposition~\ref{prop:reduce-to-posterior-truncation} is incorporated in the following proposition, whose proof is postponed to the appendix.
\begin{proposition}\label{prop:second-moment-bound}
	For any $A\subset V$ with $|A|=K$ and any $\sigma\in\B(A,\Vs)$, 
	\begin{equation*}
		\mathbb{E}_{(G,\G)\sim \Ps}f(G,\G,A,\sigma)^2\le((n-K)!)^2\exp\left(\zeta K\log K+\zeta(\alpha-1)K\log n+o(K\log n)\right)\,.
	\end{equation*}
\end{proposition}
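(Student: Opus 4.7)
The plan is to expand
$$f(G,\G,A,\sigma)^2=\sum_{\pi_1,\pi_2\in\B(V,\Vs,A,\sigma)}P^{|\mathcal E_{\pi_1}|+|\mathcal E_{\pi_2}|}Q^{2|E|+2|\Es|}R^{2\binom n 2}\cdot\mathbf 1_{\mathrm{trunc}}$$
and to reparametrize each pair $(\pi_1,\pi_2)$ via $\phi=\pi_1^{-1}\circ\pi_2$, which is necessarily a permutation of $V$ fixing $A$ pointwise (so that $\pi_2=\pi_1\circ\phi\in\B(V,\Vs,A,\sigma)$). Setting $\tilde G_e:=\G_{\Pi_1(e)}$, which is i.i.d. Ber$(ps)$ under $\Ps$ and independent of $G$, the integrand rewrites as $\prod_{e\in E_0}\ell(G_e,\tilde G_e)\,\ell(G_e,\tilde G_{\Phi(e)})$ where $\Phi$ is the edge permutation induced by $\phi$, and by the symmetry of $\Ps$ under relabeling the expectation depends only on $\phi$ (not on $\pi_1$). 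This gives
$$\mathbb{E}_\Ps[f^2]=(n-K)!\sum_\phi W(\phi)\,,$$
where the sum ranges over permutations $\phi$ of $V$ fixing $A$ pointwise, so it suffices to prove $\sum_\phi W(\phi)\le(n-K)!\exp(\zeta K\log K+\zeta(\alpha-1)K\log n+o(K\log n))$.

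Next I would split $E_0$ into three families relative to $A$: (a) the $\binom K 2$ edges within $A$, each a $1$-cycle of $\Phi$ since $\phi|_A=\mathrm{id}$; (b) edges with exactly one endpoint in $A$, whose $\Phi$-orbits have length equal to the $\phi$-node-cycle through their endpoint in $V\setminus A$; and (c) edges within $V\setminus A$, classified as in Lemma~\ref{lem-orbit-structure}. By independence of $\{G_e,\tilde G_e:e\in O\}$ across distinct $\Phi$-orbits $O$ under $\Ps$, the expectation factors into per-orbit contributions $w_O$, each of which I would evaluate via a $2\times 2$ transfer-matrix computation analogous to Proposition~\ref{prop:exponential-moment-of-orbit} but now for the joint weight of $(G,\tilde G)$ along the orbit.

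The type-(a) contribution alone produces the stated exponent. Since admissibility (ii) forces $|(\mathcal E_\sigma)_A|\le\zeta K$ (as $K=n^\beta\le n/\log n$), I would expand $\mathbb{E}_\Ps\bigl[\prod_{e\in(E_0)_A}\ell(G_e,\tilde G_e)^2\cdot\mathbf 1_{\{|(\mathcal E_\sigma)_A|\le\zeta K\}}\bigr]$ as a multinomial sum over joint types $(G_e,\tilde G_e)\in\{0,1\}^2$; observing that the per-edge $(1,1)$-weight is $(ps)^2\cdot P^2Q^4R^2=s^2$ while the other type weights sum to $1+O(p)$, the truncated sum's dominant term lies at the boundary $n_{11}=\zeta K$ and equals $\binom{\binom K 2}{\zeta K}s^{2\zeta K}(1+O(p))^{\binom K 2}$. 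Stirling's formula together with $s^2=n^{\alpha-1+o(1)}$ then yields exactly $\exp(\zeta K\log K+\zeta(\alpha-1)K\log n+o(K\log n))$. The type-(c) contribution, summed over cycle structures on $V\setminus A$ enumerated as in Lemma~\ref{lem:number-of-permutations-with-many-small-cycle}, should yield an additional factor of $(n-K)!\exp(o(K\log n))$ by adapting the subcritical second-moment computations of \cite{DD22+}. Cross-type (b) orbits are controlled by the good-set property: the vertices of $A$ are pairwise $(2C+2)$-separated in $\mathcal H_{\pi^*}$ and avoid short cycles, so each cross orbit contributes only a vanishing correction in the exponent.

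The main technical obstacle will be weaving these three contributions together while carrying the truncations faithfully through the computation. In particular, type-(b) orbits create a coupling between the neighborhoods of $A$ and the $\phi$-cycle structure on $V\setminus A$ that is absent from the pure detection analysis of \cite{DD22+}; handling it cleanly requires the full strength of the good-set definition together with admissibility items (iii)--(v) (bounded degree, tree-like small neighborhoods, few short cycles) to keep the combinatorial blow-up below $\exp(o(K\log n))$. Care must also be taken that the per-orbit bounds for long orbits on $V\setminus A$ compound only to $\exp(o(K\log n))$ rather than polynomially eroding the slack in the stated bound.
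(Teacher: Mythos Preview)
Your decomposition of the type-(a) contribution is correct and matches the paper's Proposition~\ref{prop:bound-I1} exactly: the truncation $|(\mathcal E_\sigma)_A|\le\zeta K$ factors off cleanly because edges within $A$ are independent of the rest under $\Ps$, and the resulting truncated binomial moment gives precisely the exponent $\zeta K\log K+\zeta(\alpha-1)K\log n$.

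The genuine gap is in your handling of types (b) and (c). Your claim that ``the expectation factors into per-orbit contributions $w_O$'' is only valid if you drop the indicators $\mathbf 1_\Gc\cdot\mathbf 1_{\{A\text{ good in }\mathcal H_{\pi_i}\}}$, since admissibility and the good-set property are \emph{global} constraints on $\mathcal H_{\pi_1}$ and $\mathcal H_{\pi_2}$ that do not decompose along $\Phi$-orbits. But if you drop them, the untruncated second moment over $V\setminus A$ blows up---this is exactly why truncation was introduced in the first place (see the opening of Section~\ref{subsec:good-event-and-good-set}). You flag this as ``the main technical obstacle'' but give no mechanism to resolve it, and there does not appear to be one along the direct-factorization route. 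The paper instead takes a different path for part (II): it bounds the sum $\sum_\pi(\cdots)\mathbf 1_{\Gc_A^2\cap\Gc_A^3}$ by $(n-K)!$ times the conditional likelihood ratio $L'=\dif\Qs'_{A,\As,\sigma}/\dif\Ps_{A,\As}$, so that $(\mathrm{II})\le((n-K)!)^2\,\mathbb E_{\Ps}[L'^2]=((n-K)!)^2\,\mathbb E_{\Qs'}[L']$. Under the tilted measure $\Qc'$ the truncation holds \emph{almost surely}, so $(\mathcal H_{\pi^*})^A$ is deterministically admissible with $A$ good; the key observation (inherited from \cite{DD22+}) is that orbits of $\pi^{-1}\circ\pi^*$ not entirely contained in $(\mathcal H_{\pi^*})^A$ contribute at most $1$ to $L$, reducing everything to a deterministic bound on $\frac{1}{(n-K)!}\sum_\pi p^{-|\mathcal J_\pi|}$. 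This last quantity is controlled by subgraph-enumeration lemmas (Lemmas~\ref{lem-apply-good-set}--\ref{lem:apply-good-set-2}), and it is precisely here---not in any transfer-matrix orbit moment---that the good-set separation and admissibility items (iii)--(v) do their work.
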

\begin{proof}[Proof of Proposition~\ref{prop:reduce-to-posterior-truncation} assuming Proposition~\ref{prop:second-moment-bound}]
By Proposition~\ref{prop:further-reduction}, it suffice to show
\begin{equation}\label{eq:reduction_3}
	\mathbb{E}_{(G,\G)\sim \Ps}\sum_{A\subset V,|A|=K}g(G,\G,A)=o(n!)\,.
\end{equation} Applying Cauchy-Schwarz inequality to $\binom{n}{K}$ real numbers $g(G,\G,A)$ (for $A\subset V$ with $|A|=K$), we can upper-bound the left hand side of \eqref{eq:reduction_3} by
	\begin{equation*}
		\mathbb{E}_{(G,\G)\sim\Ps}\left(\binom{n}{K}\sum_{A\subset V,|A|=K}g(G,\G,A)^2\right)^{1/2}\,.
	\end{equation*}
Use the simple fact that $\max_{x\in S}x^2\le \sum_{x\in S}x^2$, the expression above is bounded by
\begin{equation*}
	\mathbb{E}_{(G,\G)\sim \Ps}\left(\binom{n}{K}\sum_{A\subset V,|A|=K}\sum_{\sigma\in \B(A,\Vs)}f(G,\G,A,\sigma)^2\right)^{1/2}\,.
\end{equation*}
By Jensen's inequality, we get that the left hide side of \eqref{eq:reduction_3} is further bounded by
\begin{equation*}
	\left(\binom{n}{K}\sum_{A\subset V,|A|=K}\sum_{\sigma\in \B(A,\Vs)}\mathbb{E}_{(G,\G)\sim \Ps}f(G,\G,A,\sigma)^2\right)^{1/2}\,.
\end{equation*}
Combined with Proposition~\ref{prop:second-moment-bound}, this  is no more than
\begin{align}
			&\left[\binom{n}{K}^3K!((n-K)!)^2\exp\left(\zeta K\log K+\zeta(\alpha-1)K\log n+o(K\log n)\right)\right]^{1/2} \nonumber\\
			=&n!\exp\left(\frac{1+\zeta(\alpha-1)-\beta(2-\zeta)}{2}K\log n+o(K\log n)\right)\,, \label{eq-K-not-too-small}
\end{align}
which is $o(n!)$ by the choice of $\beta$ as in $\eqref{eq:beta}$. This proves \eqref{eq:reduction_3} and thus completes the proof of Proposition~\ref{prop:reduce-to-posterior-truncation}.
\end{proof}

\small

\appendix
\section{Complimentary proofs}\label{sec:appendix}
\subsection{Proof of Proposition~\ref{prop:exponential-moment-of-orbit}}\label{subsec:a-1}
\begin{proof}
	Fix $\theta>0$. Let $\{(I_k,J_k,\mathsf J_k)\}_{k=1}^\infty$ be i.i.d. triples of independent Bernoulli variables with $\E I_k = p$ and $\E J_k = \E \mathsf J_k =s$. For each integer $m\ge 1$, let $a_m,b_m,c_m$ be the value of $$\mathbb E \exp\left(\theta\sum_{i=0}^{m-1} G_{i}\mathsf G_{i+1}\right)$$ 
	where $(G_k,\mathsf G_k)=(I_kJ_k,I_k\mathsf J_k)$ for $1\le k\le m-1$ and $(G_0,\mathsf G_{m})=(0,0),(1,1),(0,1)$ respectively. 
	
	A straightforward recurrence argument yields that for any $m\ge 1$,
\begin{equation*}
	\begin{cases}
		a_{m+1}=psc_m+(1-ps)a_m\,,\\
		c_{m+1}=pse^\theta[sc_m+(1-s)a_m]+[ps(1-s)c_m+(1-2ps+ps^2)a_m]\,,
	\end{cases}
\end{equation*}
and 
\begin{equation*}
	\begin{cases}
		b_{m+1}=pse^\theta[sb_m+(1-s)c_m]+[ps(1-s)b_m+(1-2ps+ps^2)c_m]\,,\\
		c_{m+1}=psb_m+(1-ps)c_m\,.
	\end{cases}
\end{equation*}
As a result, $a_m,b_m,c_m$ can be written as linear combinations of $\mu_1^m$ and $\mu_2^m$ where $\mu_1>1>\mu_2>0$ are the two roots of the characteristic polynomial 
\begin{equation*}
	x^2-\left(1+ps^2\nu\right)x+(ps^2-p^2s^2)\nu\,,\quad\mbox{where }\nu\stackrel{\operatorname{def}}{=}e^\theta-1\,.
\end{equation*}
For any $\theta$ with $1\ll e^\theta\ll n$,  asymptotically it holds
\begin{equation}
	\label{eq:aymptotics}
	\begin{cases}
		\mu_1=1+p^2s^2\nu+p^3s^4\nu^2+O(p^4s^4\nu^2)=1+e^\theta/ n^{1+\alpha+o(1)}\,,\\
		\mu_2=ps^2\nu-p^2s^2\nu-p^3s^4\nu^2+O(p^4s^4\nu^2)=(\lambda+o(1))e^\theta /n\,.
	\end{cases}
\end{equation}
Indeed, $1\ll e^\theta\ll n$ implies $ps^2\nu\ll 1\ll \nu$, and as a result we have
\begin{align*}
	\mu_1=&\ \frac{1+ps^2\nu+\sqrt{(1+ps^2\nu)^2-4(ps^2-p^2s^2)\nu}}{2}= \frac{1+ps^2\nu+(1-ps^2\nu)\sqrt{1+\frac{4p^2s^2\nu}{(1-ps^2\nu)^2}}}{2}\\
	=&\  \frac{1+ps^2\nu+(1-ps^2\nu)\left(1+\frac{2ps^2\nu}{(1-ps^2\nu)^2}+O(p^4s^4\nu^2)\right)}{2}
	=1+p^2s^2\nu+p^3s^4\nu^2+O(p^4s^4\nu^2)\,.
\end{align*}
This yields the first equity in \eqref{eq:aymptotics} and the second follows from $\mu_2=1+ps^2\nu-\mu_1$.

Note that for any orbit $O_k\in (\mathcal O_\pi)_A$ with length $k$, 
$
\mathbb{E}_{(G,\G)\sim\Qc[\cdot\mid\pi^*]}\exp\left(\theta|\mathcal E_{O_k}|\right)
$
 is a linear combination of $a_k,b_k$ and $c_k$. More precisely, when $O$ is a $k$-cycle, 
 $$
 \mathbb{E}_{(G,\G)\sim\Qc[\cdot\mid\pi^*]}\exp\left(\theta|\mathcal E_{O_k}|\right)=(1-2ps+ps^2)a_k+ps^2b_k+2ps(1-s)c_k\,,
 $$ 
  and when $O$ is a $k$-chain,
  $$
   \mathbb{E}_{(G,\G)\sim\Qc[\cdot\mid\pi^*]}\exp\left(\theta|\mathcal E_{O_k}|\right)=(1-ps)^2a_k+p^2s^2b_k+2ps(1-ps)c_k\,.
   $$
   Hence in both cases, $ \mathbb{E}_{(G,\G)\sim\Qc[\cdot\mid\pi^*]}\exp\left(\theta|\mathcal E_{O_k}|\right)$ has the form $c_1\mu_1^k+c_2\mu_2^k$ for some constants $c_1,c_2$ determined by initial conditions. 
   
   In the cycle case, it can be shown that $c_1=c_2=1$, and this proves \eqref{eq:exp-moment-for-cycles} (see also Remark~\ref{rmk:trace}). In the chain case, we can  compute the initial values of $k=1,2$ explicitly. Then we obtain
\[
\begin{cases}
	c_1\mu_1+c_2\mu_2=1+p^2s^2\nu\stackrel{\Delta}{=}A_1\,,\\
	c_1\mu_1^2+c_2\mu_2^2=1+2p^2s^2\nu+p^3s^4\nu^2\stackrel{\Delta}{=}A_2\,.
\end{cases}
\]
Solving the linear system yields that
\begin{equation}
	\label{eq:coefficients}
		c_1= \frac{A_2-\mu_2A_1}{\mu_1(\mu_1-\mu_2)},\quad	c_2= \frac{\mu_1A_1-A_2}{\mu_2(\mu_1-\mu_2)}\\\,.
\end{equation}
Plugging \eqref{eq:aymptotics} into \eqref{eq:coefficients} then gives 
\[
\begin{aligned}
	c_1=&\  \frac{1+2p^2s^2\nu+p^3s^4\nu^2-(1+p^2s^2\nu)((ps^2-p^2s^2)\nu-p^3s^4\nu^2+O(p^4s^4\nu^2))}{(1+p^2s^2\nu+p^3s^4\nu^2+O(p^4s^4\nu^2))(1-ps^2\nu+2p^2s^2\nu+2p^3s^4\nu^2+O(p^4s^4\nu^2))}\\
	\le&\ \frac{1-ps^2\nu+3p^2s^2\nu+p^3s^4\nu^2+O(p^4s^4\nu^2)}{1-ps^2\nu+3p^2s^2\nu+3p^3s^4\nu^2+O(p^4s^4\nu^2)}\le 1\,,
\end{aligned}
\]
and 
\[
\begin{aligned}
	c_2=&\ \frac{(1+p^2s^2\nu)(1+p^2s^2\nu+p^3s^4\nu^2+O(p^4s^4\nu^2))-(1+2p^2s^2\nu+p^3s^4\nu^2+O(p^4s^4\nu^2))}{(1+o(1))ps^2\nu}\\
	=&\ \frac{O(p^4s^4\nu^2)}{(1+o(1))ps^2\nu}=O(p^3s^2e^\theta)=e^\theta n^{-1-2\alpha+o(1)}\,.
\end{aligned}
\]
This completes the proof.
\end{proof}
\begin{remark}
	\label{rmk:trace}
	Another way to see that why the coefficients must be $1$ in the cycle case is the following: as illustrated in \cite[Appendix A]{WXY21+}, one can view $\mathbb{E}_{(G,\G)\sim \Qc[\cdot\mid\pi^*]}\exp(\theta |\mathcal E_\pi(O_k)|)$ as the trace of the $k$-th power of certain integral operator $\mathcal L$ on the space of real functions on $\{0,1\}$. The spectrum of $\mathcal L$ is precisely $\{\mu_1,\mu_2\}$, so the trace of $\mathcal L^k$ equals to $\mu_1^k+\mu_2^k$. 
\end{remark}
\subsection{Proof of Lemma~\ref{lem:compute-the-minimum}}\label{subsec:a-2}
\begin{proof}
For exposition convenience, we prove a slightly stronger statment where we allow $x_0, \ldots, x_{N+1}$ to take real values instead of just integer values (but otherwise satisfy the same set of inequalities as specified in the definition of $\Sigma_T$ and $\Delta_{n_1, \ldots, n_N}$).
	A straightforward algebraic manipulation yields that
\[
x_0+\sum_{k=1}^{N+1}\alpha_k x_k=(1-\alpha_{N+1})x_0+\alpha_{N+1}\sum_{k=0}^{N+1}x_k-\sum_{m=1}^{N}(\alpha_{m+1}-\alpha_m)\sum_{k=1}^m x_k\,.
\]
As a result, the minimum of $x_0+\sum_{k=1}^{N+1} \alpha_kx_k$ for $(x_0,\dots,x_{N+1})\in \Sigma_T\cap \Delta_{n_1,\dots,n_N}$ is achieved at $x_0=0,x_k=(\varrho+\eta)kn_k,1\le k\le N$ and $x_{N+1}=[(\alpha-\eta)T-(\varrho+\eta)\sum_{k=1}^N kn_k]\vee 0$. Hence, $M(T,n_1,\dots,n_N)$ is no less than
\begin{align}
	&\ [\alpha_{N+1}(\varrho-\eta)-1]T-\sum_{k=1}^N\left[\alpha_{N+1}(\varrho+\eta)k-(k-1)(\varrho+\eta)-1\right] n_k\nonumber\\
	\ge &\ [\alpha_{N+1}(\varrho-\eta)-1]T-\sum_{k:1\le k\le N,(1-k(1-\alpha_{N+1}))(\varrho+\eta)>1}[(1-k(1-\alpha_{N+1}))(\varrho+\eta)-1]n_k\nonumber\\
	\ge&\  [\alpha_{N+1}(\varrho-\eta)-1]T-[\alpha_{N+1}(\varrho+\eta)-1]n_1\nonumber\\
	&\quad\quad\quad-\max_{2\le k \le \frac{\varrho+\eta-1}{(1-\alpha_{N+1})(\varrho+\eta)}}\frac{[(1-k(1-\alpha_{N+1}))(\varrho+\eta)-1]}{k}\times \sum_{t=2}^{N}tn_{t}\,.\label{eq:minimum-bound}
\end{align}
If $\frac{\varrho+\eta-1}{(1-\alpha_{N+1})(\varrho+\eta)}<2$, then the right hand side of \eqref{eq:minimum-bound} is lower-bounded by
\begin{equation}
	\label{eq:minimum-bound-1}
[\alpha_{N+1}(\varrho-\eta)-1]T-[\alpha_{N+1}(\varrho+\eta)-1]n_1\ge [\alpha_{N+1}(\varrho-\eta)-1- c_\lambda^{-1}(\varrho+\eta)\delta]T.
\end{equation}
If $\frac{\varrho+\eta-1}{(1-\alpha_{N+1})(\varrho+\eta)}\geq 2$, then the right hand side of \eqref{eq:minimum-bound} is lower-bounded by
\begin{align}
		&\  [\alpha_{N+1}(\varrho-\eta)-1]T-[\alpha_{N+1}(\varrho+\eta)-1]n_1-\left[\frac{\varrho+\eta-1}{2}-(1-\alpha_{N+1})(\varrho+\eta)\right]T\nonumber\\
	=&\ \left[\frac{\varrho-\eta-1}{2}-\left(2\alpha_{N+1}-\frac{1}{2}\right)\eta\right]T-[\alpha_{N+1}(\varrho+\xi)-1]n_1\nonumber\\
	\ge&\ \left[\frac{\varrho-4\eta-1}{2}- c_\lambda^{-1}(\varrho+\eta)\delta\right]T\label{eq:minimum-bound-2}\,.
\end{align}
From \eqref{eq:minimum-bound-1} and   \eqref{eq:minimum-bound-2}, we may take positive constants $\delta,\delta_0$ small enough so that
$$
0<\delta_0<[\alpha_{N+1}(\varrho-\eta)-1- c_\lambda^{-1}(\varrho+\eta)\delta]\wedge\left[\frac{\varrho-4\eta-1}{2}- c_\lambda^{-1}(\varrho+\eta)\delta\right]\,,
$$
where the right hand side above is positive for small $\delta$ due to the choice of $\eta$ in Definition~\ref{def:estimator} and $N$ in \eqref{eq:N}.
This concludes the lemma.
\end{proof}

\subsection{Proof of Proposition~\ref{prop:second-moment-bound}}\label{sec:proof-of-second-moment-bound}
In this subsection we give the proof of Proposition~\ref{prop:second-moment-bound}. Our method for controlling the truncated second moment is essentially the same as approaches in \cite[Section 3]{DD22+}, and most steps here are extracted from \cite{DD22+} with only notational changes. However, a straightforward application of arguments given in \cite{DD22+} is not enough to yield the desired bound, and for this reason we have introduced additional truncations and have considered extra combinatorial structures. Therefore, we provide a self-contained proof here for completeness, despite the fact that the proof enjoys a substantial overlap with that in \cite{DD22+}.

Recall the definition of $H_A$ and $H^A$ for a graph $H=(V,E)$ and a subset $A$ of $V$ in Section~\ref{subsec:notation-and-fact}. Throughout this section, we fix $A\subset V,|A|=K$ and $\sigma\in \B(A,\Vs)$. For any $(\pi^*,G,\G)$ with $\pi^*\in \B(V,\Vs,A,\sigma)$, define 
\begin{align*}
\Gc_A^1=\{|(\mathcal E_\sigma)_A|\le \zeta K\}\,,\quad
	\Gc_A^2=\{(\mathcal H_{\pi^*})^A\mbox{ is admissible}\}\,,\quad
	\Gc_A^3=\{A\mbox{ is a good set in }\mathcal (H_{\pi^*})^A\}\,.
\end{align*}
Then it is clear that $\Gc\subset \Gc_A^1\cap\Gc_A^2, \{A\mbox{ is a good set in }\mathcal H_{\pi^*}\} \subset \Gc_A^3$ and $\Gc_A^1$ is independent with $\Gc_A^2\cap \Gc_{A}^3$ under $\Ps$. As a result, the second moment of $f(G,\G,A,\As,\sigma)$ under $\Ps$ is bounded by
\begin{equation}\label{eq:second-moment-bound-1}
	\begin{aligned}
		\mathbb{E}_{(G,\G)\sim \Ps}f(G,&\G,A,\sigma)^2\le\  \mathbb{E}_{(G_A,\G_\As)\sim\Ps}\left(P^{|(\mathcal E_\sigma)_A|}Q^{|E_A|+|\Es_\As|}R^{\binom{K}{2}}\mathbf{1}_{\Gc_A^1}\right)^2\\
		\times &\ \mathbb{E}_{(G^A,\G^\As)\sim \Ps} \left(\sum_{\pi\in \B(V,\Vs,A,\sigma)}P^{|(\mathcal E_\pi)^{A}|}Q^{|E^A|+|\Es^\As|}R^{\binom{n}{2}-\binom{K}{2}}\mathsf{1}_{\Gc_{A}^2\cap \Gc_{A}^3}\right)^2\,.
	\end{aligned}
\end{equation}
Denote the first and second term in the expression by (I) and (II), respectively. Proposition~\ref{prop:second-moment-bound} follows immediately once we can upper-bound (I) and (II) as in the next two propositions.
\begin{proposition}\label{prop:bound-I1}
	$(\mathrm{I})\le
		\exp\left(\zeta K\log K+\zeta(\alpha-1)K\log n+o(K\log n)\right)
	$.
\end{proposition}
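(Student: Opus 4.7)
The plan is to expand $(\mathrm{I})$ as a combinatorial sum and then exploit the truncation $\mathbf{1}_{\Gc_A^1}$ to reduce it to a single dominant term. Under $\Ps$ the pairs $\{(G_e,\Gs_{\Pi(e)}):e\in(E_0)_A\}$ are i.i.d., so I will classify each pair by its type in $\{(0,0),(0,1),(1,0),(1,1)\}$ with counts $N_{00},N_{01},N_{10},N_1$; this gives $|(\mathcal E_\sigma)_A|=N_1$ and $|E_A|+|\Es_\As|=N_{01}+N_{10}+2N_1$. Setting $M=\binom{K}{2}$, $a=(1-ps)^2$, $b=ps(1-ps)Q^2$, and $c=(ps)^2P^2Q^4$, the multinomial theorem applied to $(N_{00},N_{01},N_{10})$ will collapse the expectation to the one-variable sum
\[
(\mathrm{I})=R^{2M}\sum_{N_1=0}^{\lfloor\zeta K\rfloor}\binom{M}{N_1}c^{N_1}(a+2b)^{M-N_1}.
\]

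Next I will argue that the truncated sum is dominated by its last term. The ratio $f(N_1+1)/f(N_1)=(M-N_1)c/\bigl((N_1+1)(a+2b)\bigr)$ is unimodal, with mode $N_1^\star\asymp Mc/(a+2b)\asymp K^2 s^2/2$. Since $s^2=\lambda n^{\alpha-1+o(1)}$, the mode is of order $K\cdot\lambda n^{\alpha+\beta-1+o(1)}$, which exceeds $\zeta K$ for large $n$ precisely because $\beta>1-\alpha$ by \eqref{eq:beta}. Hence the summand is monotone increasing on $[0,\zeta K]$, so
\[
(\mathrm{I})\le(\zeta K+1)R^{2M}\binom{M}{\zeta K}c^{\zeta K}(a+2b)^{M-\zeta K}\le(\zeta K+1)R^{2M}\binom{M}{\zeta K}c^{\zeta K},
\]
where the last step uses $a+2b\le(1-ps)^2+2ps(1-ps)=1-p^2s^2\le1$ (recall $Q\le 1$).

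The final step is to take logarithms and match each factor to the claimed bound. A direct computation from the definitions of $P$ and $Q$ gives $c=(psPQ^2)^2=\bigl[s(1-ps)^2/(1-2ps+ps^2)\bigr]^2=s^2(1+o(1))$; combined with $s^2=\lambda n^{\alpha-1+o(1)}$ this yields $\zeta K\log c=\zeta(\alpha-1)K\log n+o(K\log n)$. The elementary estimate $\binom{M}{\zeta K}\le(eM/(\zeta K))^{\zeta K}$ together with $\log M=2\beta\log n+O(1)$ gives $\log\binom{M}{\zeta K}\le\zeta K\log K+O(K)$. The remaining two factors contribute only lower-order terms: $\log(\zeta K+1)=O(\log n)$ is trivially $o(K\log n)$, while $R=1+O(ps^2)$ gives $2M\log R=O(M\cdot ps^2)=O(K^2\lambda/n)$, which is $o(K\log n)$ since $\beta<1$. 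Adding these up yields the advertised bound.

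The only real subtlety is the monotonicity step, which relies in an essential way on the constraint $\beta>1-\alpha$ from \eqref{eq:beta} (without it, the truncation threshold would exceed the mode and the sum would blow up). Everything else is a careful but routine asymptotic evaluation in the given parameter regime.
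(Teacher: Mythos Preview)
Your proof is correct and follows essentially the same approach as the paper's. Both arguments reduce $(\mathrm{I})$ to a truncated binomial-type sum in $N_1=|(\mathcal E_\sigma)_A|$, observe that the summand is increasing on $[0,\zeta K]$ precisely because $\beta>1-\alpha$ forces $Ks^2\gg1$, and then evaluate the top term. The only organizational difference is that the paper first discards the $Q$-factors via $Q\le1$ and bounds $R^{2\binom{K}{2}}=\exp(o(K))$ upfront, working directly with $\mathbb{E}[P^{2N_1}\mathbf{1}_{\Gc_A^1}]$ and the law $N_1\sim\mathbf{B}\bigl(\binom{K}{2},(ps)^2\bigr)$, whereas you keep all four pair-types and collapse by the multinomial theorem; the resulting one-variable sums and their asymptotics are identical.
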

\begin{proposition}\label{prop:bound-I2}
	$(\mathrm{II})\le((n-K)!)^2\exp(o(K\log n))$.
\end{proposition}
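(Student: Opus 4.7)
The plan is to expand $(\mathrm{II})$ as a double sum over pairs $(\pi_1,\pi_2)\in \B(V,\Vs,A,\sigma)^2$ and exploit independence of edges under $\Ps$. By the likelihood-ratio identity \eqref{eq:relation-dQ/dP-PQR}, the summand factors as $\prod_{e\in (E_0)^A}\ell(G_e,\G_{\Pi_1(e)})\ell(G_e,\G_{\Pi_2(e)})$ times the truncation indicators $\mathbf{1}_{\Gc_A^2\cap\Gc_A^3}$ for each $\pi_i$ (where the role of $\pi^*$ in Definitions~\ref{def:good-event} and~\ref{def:good-set} is played by $\pi_i$). Reparametrizing by $\tau=\pi_1^{-1}\circ\pi_2$, a permutation of $V$ fixing $A$ pointwise, and letting $T$ denote its induced edge permutation on $(E_0)^A$, the expectation under $\Ps$ factorizes over the $T$-orbits exactly as in Section~\ref{subsec:prior-estimation}. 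Summing over $\pi_1$ yields the prefactor $(n-K)!$, reducing the problem to
\[
\sum_{\tau}\E_\Ps\biggl[\prod_{O\in \mathrm{orbits}(T)}\prod_{e\in O}\ell(G_e,\G_{\Pi_1(e)})\ell(G_e,\G_{\Pi_1(T(e))})\cdot \mathbf{1}_{\Gc_A^2\cap\Gc_A^3}\biggr]\le (n-K)!\exp(o(K\log n))\,.
\]

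For each orbit $O$ of $T$, a transfer-operator recurrence under $\Ps$, mirroring the proof of Proposition~\ref{prop:exponential-moment-of-orbit} but with $\Ps$ replacing $\Qc[\cdot\mid \pi^*]$, yields per-orbit expectations whose eigenvalues are close to $1$: fixed edges of $T$ (those with $\Pi_1(e)=\Pi_2(e)$) contribute bounded constant factors, while edges in longer orbits contribute factors decaying as negative powers of $n$. Classifying $\tau$ by its cycle structure on $V\setminus A$ and using the classical count $(n-K)!/\prod_k k^{m_k}m_k!$ (in the spirit of Lemma~\ref{lem:number-of-permutations-with-many-small-cycle}) together with the orbit description in Lemma~\ref{lem-orbit-structure}, I would then rewrite the bound as a sum indexed by cycle types and bound each term.

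The truncations $\Gc_A^2$ and $\Gc_A^3$ now do the heavy lifting: admissibility of $(\mathcal H_{\pi_i})^A$ (items (i)-(v) in Definition~\ref{def:good-event}) constrains the densest subgraph via $\xi,\zeta$, the maximum degree, and the number of $k$-cycles of the exterior intersection graph, while the good-set property isolates $A$ from short cycles and from itself in $(\mathcal H_{\pi_i})^A$. These constraints suppress precisely those cycle types of $\tau$ which would otherwise dominate the second moment, ensuring that for each admissible cycle type the product of per-orbit factors and the enumeration factor combine to at most $\exp(o(K\log n))$.

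The main obstacle is that the un-truncated second moment genuinely blows up: certain cycle types of $\tau$ — corresponding to rare but high-contribution configurations where many intersection-graph edges concentrate on short cycles or near $A$ — would otherwise contribute exponentially in $K\log n$ rather than $o(K\log n)$. The extra truncations beyond those used in \cite{DD22+} are tailored to exclude exactly these blowup events while remaining typical under $\Qc$ (Lemma~\ref{lem:G-is-a.s.s.}, Lemma~\ref{lem:good-set}). Propagating both truncations through the per-orbit and per-cycle-type bounds, in parallel to but strictly refining \cite[Section 3]{DD22+}, is the technical heart of the proof.
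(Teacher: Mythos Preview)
Your orbit decomposition under $\Ps$ is correct and, via the identity $\E_\Ps[L_{\pi_1}L_{\pi_2}\mathbf 1_{\Gc(\pi_1)}]=\E_{\Qc_{A,\As,\sigma}[\cdot\mid\pi^*=\pi_1]}[L_{\pi_2}\mathbf 1_{\Gc(\pi_1)}]$, is mathematically equivalent to the paper's change of measure to $\Qc'_{A,\As,\sigma}$. The gap is in how you propose to use the truncations. You write that $\Gc_A^2\cap\Gc_A^3$ ``suppress precisely those cycle types of $\tau$ which would otherwise dominate'', but the truncations are constraints on the random intersection graph $(\mathcal H_{\pi_1})^A$, not on $\tau$; they do not forbid any cycle type at all. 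In particular, for $\tau=\mathrm{id}$ your per-orbit product over the $\sim n^2/2$ fixed edges is $\prod_e(1+s^2+o(s^2))\approx\exp(c\,n^{1+\alpha})$, and nothing in a cycle-type classification removes this term. Once you drop the indicator to compute per-orbit factors, the truncation has already been discarded, so the subsequent enumeration over cycle types cannot recover it.

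The paper's mechanism is quite different and is the missing idea. After the change of measure (so that $\pi^*$ plays the role of your $\pi_1$ and $(\mathcal H_{\pi^*})^A$ is conditioned to be admissible and $A$ a good set), one does \emph{not} bound orbit contributions by their unconditional moments. Instead one conditions on the random set $\mathcal J_\pi\subset\mathcal O_\pi^A$ of edge orbits lying entirely inside $(\mathcal H_{\pi^*})^A$ and uses the pointwise inequality $\E[\prod_{e\in O}\ell\mid O\notin\mathcal J_\pi]\le1$ to discard all other orbits; the remaining contribution is at most $(c_0p)^{-|\mathcal J_\pi|}$ after accounting for the conditioning cost (Lemma~\ref{lem:probability-of-Gc_A^2capGc_A^3}). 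The sum over $\pi$ is then rewritten as a sum over subgraph configurations of the \emph{fixed} admissible graph $(\mathcal H_{\pi^*})^A$, and it is precisely here that admissibility and the good-set property enter: they bound the number and edge density of connected subgraphs meeting or avoiding $A$ (Lemmas~\ref{lem-apply-good-set} and~\ref{lem:apply-good-set-2}), which is what forces the total to be $\exp(o(K\log n))$. Your proposal needs this $\mathcal J_\pi$/subgraph-enumeration step; a cycle-type classification of $\tau$ alone cannot see the intersection-graph constraints and will not close the argument.
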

We first give the proof of Proposition~\ref{prop:bound-I1}, which is a standard computation for the truncated exponential moments for binomial variables.
\begin{proof}[Proof of Proposition~\ref{prop:bound-I1}]
Recall the definition of $P,Q,R$ in \eqref{eq:PQR}. Since $Q\le 1$ and $R^{2\binom{K}{2}}=\exp(o(K))$, we just need to show that 
	\begin{equation}
		\label{eq:I_1-bound-1}
		\mathbb{E}_{(G_A,\G_\As)\sim \Ps} P^{2|(\mathcal E_\pi)_A|}\mathbf{1}_{\Gc_A^1}\le\exp(\zeta K\log K+\zeta(\alpha-1)K\log n+o(K\log n))\,.
	\end{equation}
	Since $|(\mathcal E_\pi)_A|\sim \mathbf B(\binom{K}{2},(ps)^2)$ (that is, a binomial variable with parameters $\binom{K}{2}$ and $(ps)^2$) for $(G_A,\G_\As)\sim \Ps$, the left hand side of \eqref{eq:I_1-bound-1} can be expressed as
	\begin{align}
		&
		\ \sum_{t=0}^{\lfloor\zeta K\rfloor}\binom{\binom{K}{2}}{t}P^{2t}p^{2t}s^{2t}(1-p^2s^2)^{\binom{K}{2}-t}\le \exp(o(K))\times\sum_{t=1}^{\lfloor \zeta K\rfloor}\binom{\binom{K}{2}}{t}s^{2t}\nonumber\\
		=&\ \exp(o(K))\times \binom{\binom{K}{2}}{\lfloor\zeta k\rfloor}s^{2\lfloor\zeta k\rfloor}\times\left(1+\sum_{t=1}^{\lfloor \zeta K\rfloor}
		\prod_{i=0}^{t-1}\frac{\lfloor\zeta K\rfloor-i}{(\binom{K}{2}-\lfloor\zeta K\rfloor+i)s^2}\right)\,.\label{eq:I1-bound-2}
	\end{align}
	Since $\beta>1-\alpha$ by \eqref{eq:beta}, $Ks^2\gg 1$ and the last term in \eqref{eq:I1-bound-2} is $1+o(1)$, we see that the left hand side of \eqref{eq:I_1-bound-1} equals to $\exp\left(\zeta K\log K+\zeta(\alpha-1)K\log n+o(K\log n)\right)$, as desired.
\end{proof}

The rest of this section is devoted to the proof of Proposition~\ref{prop:bound-I2}. Denote $\As$ for the image of $A$ under $\sigma$. Recall $E_0^A\subset E_0$ is the set of edges in $E_0$ not within $A$, and similarly for $\Es_0^\As$. We now define several probability measures $\Qc_{A,\As,\sigma},\Qc'_{A,\As,\sigma}, \Ps_{A,\As},\Qs_{A,\As,\sigma},\Qs'_{A,\As,\sigma}$, which will play important roles in our proof. 
\begin{DEF}
	Let $\Qc_{A,\As,\sigma}$ be the probability measure on the space  $$\Omega_{A,\As}=\{(\pi^*,G^A,\G^\As):\pi^*\in \B(V,\Vs), G^A,\Gs^\As\mbox{ are subgraphs of }(V,E_0^A),(\Vs,\Es_0^\As)\}$$ 
	defined as follow: the marginal distribution of $\pi^*$ under $\Qc_{A,\As,\sigma}$ is uniform on $\B(V,\Vs,A,\sigma)$, and conditioned on $\pi^*$, $(G^A,\G^\As)$ is obtained by deleting edges within $A$ and $\As$ from a pair of correlated \ER graphs $(G,\G)$ sampled according to $\Qc[\cdot\mid \pi^*]$. Let $\Qc'_{A,\As,\sigma}$ be the conditional law of $\Qc_{A,\As,\sigma}$ under the event $\Gc_{A}^2\cap \Gc_A^3$.
	
	Further, define $\Ps_{A,\As}$ as the law of a pair of independent \ER graphs with edge density $ps$ on $(V,E_0^A)$ and $(\Vs,\Es_0^\As)$, and $\Qs_{A,\As,\sigma},\Qs_{A,\As,\sigma}'$ as the marginal law for the last two coordinates of $\Qc_{A,\As,\sigma},\Qc_{A,\As,\sigma}'$, respectively.
\end{DEF} 

Again, it is straightforward to check that 
\[
\frac{\Qc[\pi,G^A,\G^\As]}{\Ps[G^A,\G^\As]}=\frac{1}{(n-K)!}\prod_{e\in E_0^A}\ell(G_e,\mathsf G_{\Pi(e)})=\frac{P^{|(\mathcal{E}_\pi)^A}Q^{|E^A|+|\Es^\As|}R^{\binom{n}{2}-\binom{K}{2}}}{(n-K)!}\,.
\]
Combined with the fact that for any triple $(\pi^*,G^A,\Gs^\As)\in \Omega_{A,\As}$ we have
$\Qc[\pi^*,G^A,\G^\As]\mathbf{1}_{\Gc_{A}^2\cap\Gc_{A}^3}\le \Qc_{A,\As,\sigma}'[\pi^*,G^A,\G^\As]$, it yields that the term (II) is bounded by
\[
(n-K)!\sum_{\pi|_A=\sigma}\frac{\Qc_{A,\As,\sigma}[\pi,G^A,\G^\As]\mathbf{1}_{\Gc_{A}^2\cap\Gc_{A}^3}}{\Ps_{A,\As}[G^A,\G^\As]}\le (n-K)! \frac{\Qs'_{A,\As,\sigma}[G^A,\G^\As]}{\Ps_{A,\As}[G^A,\G^\As]}\,.
\]
It suffice to bound the second moment of the conditional likelihood ratio $L'(G^A,\G^\As)\stackrel{\operatorname{def}}{=}\frac{\Qs'_{A,\As,\sigma}[G^A,\G^\As]}{\Ps_{A,\As}[G^A,\G^{\As}]}$ under $\Ps$, or equivalently, under $\Ps_{A,\As}$.  Note that the conditional law of $L'$ given $\pi^*$ is invariant of the realization of $\pi^*$. So in what follows, we may assume $\pi^*$ as certain fixed element in $\B(V,\Vs,A,\sigma)$.

Recall the definition of the set of edge orbits $\mathcal O_\pi$ in $E_0$ induced by $\phi=\pi^{-1}\circ \pi^*$. For any $\pi\in \B(V,\Vs,A,\sigma)$, let $\mathcal O_\pi^A\subset \mathcal O_\pi$ be the edge orbits that are entirely contained in $E_0^A$ and let $\mathcal J_\pi\subset \mathcal O_\pi^A$  be the set of edge orbits in $\mathcal O_\pi^A$ that are \emph{entirely} contained in $(\mathcal H_{\pi^*})^A$ (i.e., the $\pi^*$-intersecting graph of $G^A$ and $\Gs^\As$). Note that while $\mathcal O_\pi^A$ is deterministic whenever $\pi$ is fixed, $\mathcal J_\pi$ is random depending on the realization of $(G^A,\G^\As)$. Let $H(\mathcal J_\pi)$ be the subgraph of $(\mathcal H_{\pi^*})^A$ with vertices and edges from orbits in $\mathcal J_\pi$. With slight abuse of notation, we denote $|\mathcal J_\pi|$ for the total number of \emph{edges} in orbits $O\in \mathcal J_\pi$. Inspired by \cite[Lemma 3.2]{DD22+}, we have the following proposition, which connects the second moment of $L'$ with the exponential moment of $\mathcal J_\pi$. 	
\begin{proposition}
	The second moment of $L'(G^A,\G^\As)$ under $\Ps_{A,\As}$ is bounded by
	\begin{equation}
		\label{eq:bound-the-likelihoodratio-0}
		\frac{1}{\Qc_{A,\As,\sigma}[\Gc_2^A\cap \Gc_3^A]}\ 
	\mathbb{E}_{(\pi^*,G^A,\G^\As)\sim \Qc_{A,\As,\sigma}'} \frac{1}{(n-K)!}\sum_{\pi\in \B(V,\Vs,A,\sigma)}\frac{p^{-|\mathcal J_\pi|}}{\Qc[\Gc_A^2\cap \Gc_A^3\mid \pi^*,\mathcal J_\pi]}\,.
	\end{equation}
\end{proposition}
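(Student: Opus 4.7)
Following the blueprint of \cite[Lemma 3.2]{DD22+}, my plan is to reduce the second moment to a sum over $\pi$ of an orbit-based estimate in three stages. I will first use $\mathbb{E}_{\Ps_{A,\As}}(L')^2 = \mathbb{E}_{\Qc'_{A,\As,\sigma}}[L']$, since $\Qs'_{A,\As,\sigma}$ is the marginal of $\Qc'_{A,\As,\sigma}$ and $L'$ depends only on $(G^A, \G^\As)$. Expanding $L' = \Qs'_{A,\As,\sigma}/\Ps_{A,\As}$ using the definition of $\Qc'_{A,\As,\sigma}$ as $\Qc_{A,\As,\sigma}$ conditioned on $\Gc_A^2 \cap \Gc_A^3$ yields
\[
L'(G^A, \G^\As) = \frac{1}{(n-K)!\,\Qc_{A,\As,\sigma}[\Gc_A^2 \cap \Gc_A^3]}\sum_{\pi \in \B(V,\Vs,A,\sigma)} \ell_\pi(G^A, \G^\As)\, \mathbf{1}_{\Gc_A^2 \cap \Gc_A^3}(\pi, G^A, \G^\As),
\]
where $\ell_\pi = \prod_{e \in E_0^A}\ell(G_e, \G_{\Pi(e)})$ and the indicator is interpreted by substituting $\pi$ for $\pi^*$ in the definition of $\Gc_A^2 \cap \Gc_A^3$. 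Plugging this back and interchanging sum with expectation reduces the goal to the per-matching inequality $\mathbb{E}_{\Qc'}[\ell_\pi \mathbf{1}_\Gc(\pi,\cdot)] \leq \mathbb{E}_{\Qc'}[p^{-|\mathcal J_\pi|}/\Qc[\Gc_A^2 \cap \Gc_A^3 \mid \pi^*, \mathcal J_\pi]]$ for each $\pi$, after which summation reassembles the target bound.

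For each $\pi$, I will apply the tower property conditional on $(\pi^*, \mathcal J_\pi)$. On the right-hand side, $\mathbb{E}_\Qc[\mathbf{1}_\Gc(\pi^*,\cdot) \mid \pi^*, \mathcal J_\pi] = \Qc[\Gc_A^2 \cap \Gc_A^3 \mid \pi^*, \mathcal J_\pi]$ cancels the denominator, reducing the matter to the bare deterministic estimate $\mathbb{E}_\Qc[\ell_\pi \mathbf{1}_\Gc(\pi^*,\cdot)\mathbf{1}_\Gc(\pi,\cdot) \mid \pi^*, \mathcal J_\pi] \leq p^{-|\mathcal J_\pi|}$. To establish this, I will decompose $\ell_\pi$ across the edge orbits in $\mathcal O_\pi^A$: since $\pi|_A = \pi^*|_A = \sigma$, $\phi = \pi^{-1}\circ\pi^*$ fixes $A$ pointwise, so $\Phi$ permutes $E_0^A$ into cycles and the products $\prod_{e \in O}\ell(G_e, \G_{\Pi(e)})$ are mutually independent across $O \in \mathcal O_\pi^A$ under $\Qc[\cdot \mid \pi^*]$. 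For an orbit $O \in \mathcal J_\pi$, every $e \in O$ has $G_e = 1$ and $I_{\Phi^{-1}(e)} = 1$ (since $O$ is a cycle lying entirely in $\mathcal H_{\pi^*}^A$), hence $\G_{\Pi(e)} = \mathsf J_{\Pi(e)}$ and the pointwise bound $\ell(1,\mathsf J_{\Pi(e)}) \leq 1/p$ produces an orbit contribution of at most $p^{-|O|}$; multiplying over $O \in \mathcal J_\pi$ produces the advertised $p^{-|\mathcal J_\pi|}$ factor.

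The main obstacle will be controlling the contribution from orbits $O \notin \mathcal J_\pi$, since $\ell(0,0) = (1-2ps+ps^2)/(1-ps)^2 > 1$ means the factors from edges outside the intersection graph could in principle compound exponentially in $|E_0^A|$. The two admissibility indicators $\mathbf{1}_\Gc(\pi^*,\cdot)$ and $\mathbf{1}_\Gc(\pi,\cdot)$ — kept precisely for this purpose — restrict both $\mathcal H_{\pi^*}^A$ and $\mathcal H_\pi^A$ to bounded subgraph density (at most $\xi$) and bounded maximum degree (less than $\log n$); combined with the orbit independence, these constraints should force each non-$\mathcal J_\pi$ orbit to contribute at most $1$ in conditional expectation, mirroring the analysis in \cite[Section 3]{DD22+} with the finer truncations $\Gc_A^2, \Gc_A^3$ tailored to the current regime. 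Once this orbit-wise control is in place, the per-matching inequality and hence the proposition follow.
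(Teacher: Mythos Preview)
Your overall architecture is correct and tracks the paper's proof closely: both expand $L'$ as a sum over $\pi$, condition on $\mathcal J_\pi$, extract the $p^{-|\mathcal J_\pi|}$ factor from the orbits in $\mathcal J_\pi$, and then argue that the remaining orbits contribute at most $1$. Your reduction to the pointwise inequality
\[
\mathbb{E}_{\Qc}\Big[\ell_\pi\, \mathbf 1_{\Gc}(\pi^*,\cdot)\,\mathbf 1_{\Gc}(\pi,\cdot)\;\Big|\;\pi^*,\mathcal J_\pi\Big]\le p^{-|\mathcal J_\pi|}
\]
is valid.

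The gap is in your last paragraph, where you misidentify the mechanism that controls the non-$\mathcal J_\pi$ orbits. You write that the admissibility indicators $\mathbf 1_{\Gc}(\pi^*,\cdot)$ and $\mathbf 1_{\Gc}(\pi,\cdot)$ are ``kept precisely for this purpose'' and that the density and degree constraints they encode will force each such orbit to contribute at most $1$. This is not how the argument works, and in fact cannot work as stated: the admissibility events are \emph{global} constraints on $\mathcal H_{\pi^*}^A$ and $\mathcal H_\pi^A$, so retaining them destroys the very orbit independence you need to factor the expectation. The paper (following \cite[Lemma~3.1]{DD22+}) simply bounds both indicators by $1$ and discards them at this point, then uses that under $\Qc[\,\cdot\mid\pi^*,\mathcal J_\pi=J]$ the orbits $O\in\mathcal O_\pi^A\setminus J$ are mutually independent with the conditional law $\Qc[\,\cdot\mid O\notin\mathcal J_\pi]$, and that for each such orbit the explicit computation
\[
\mathbb{E}_{\Qc[\cdot\mid\pi^*]}\Big[\prod_{e\in O}\ell(G_e,\G_{\Pi(e)})\;\Big|\;O\notin\mathcal J_\pi\Big]\le 1
\]
holds with no truncation whatsoever. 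The truncations $\Gc_A^2,\Gc_A^3$ play no role in this step; they enter only later (Lemma~\ref{lem:probability-of-Gc_A^2capGc_A^3} and the enumeration bounds). So your proof is salvageable with a one-line fix: drop the two indicators and invoke \cite[Lemma~3.1]{DD22+} directly, rather than trying to leverage admissibility here.
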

\begin{proof}
	It is clear that 
	\begin{align*}
		& \Qs_{A,\As,\sigma}'[G^A,\G^\As]=\sum_{\pi\in \B(V,\Vs,A,\sigma)}\Qc_{A,\As,\sigma}'[\pi,G^A,\G^\As]\\
		\le&\ \sum_{\pi\in\B(V,\Vs,A,\sigma)}\frac{\Qc_{A,\As,\sigma}[\pi,G^A,\G^\As]}{\Qc_{A,\As,\sigma}[\Gc_A^2\cap \Gc_A^3]}= \frac{\Qs_{A,\As,\sigma}[G^A,\G^\As]}{\Qc_{A,\As,\sigma}[\Gc_{A}^2\cap \Gc_{A}^3]}\,.
		\end{align*} 
	Thus, $L'(G^A,\G^\As)$ is bounded by $\frac{L(G^A,\G^\As)}{\Qc_{A,\As,\sigma}[\Gc_{A}^2\cap \Gc_{A}^3]}$, where $L(G^A,\G^\As)\stackrel{\operatorname{def}}{=}\frac{\Qs_{A,\As,\sigma}[G^A,\G^A]}{\Ps_{A,\As}[G^A,\G^\As]}$ is the unconditional likelihood ratio and can be written explicitly as
	\[
	\begin{aligned}
	L(G^A,\G^\As)=&\ \frac{1}{(n-K)!}\sum_{\pi\in \B(V,\Vs,A,\sigma)}\prod_{e\in E_0^A}\ell(G_e,\G_{\Pi(e)})\\
	=&\ \frac{1}{(n-K)!}\sum_{\pi\in\B(V,\Vs,A,\sigma)}\prod_{O\in \mathcal O_\pi^A}\prod_{e\in O}\ell(G_e,\G_{\Pi(e)})\,.
	\end{aligned}
	\]
	Hence, the second moment of $L'$ under $\Ps_{A,\As}$, or equivalently, the first moment of $L'$ under $\Qs_{A,\As,\sigma}'$ is bounded by
	\begin{equation}\label{eq:bound-the-likelihoodratio-1}
	\frac{1}{\Qc_{A,\As,\sigma}[\Gc_{A}^2\cap\Gc_{A}^3]}\ \mathbb{E}_{\pi^*\sim \Qc'}\frac{1}{(n-K)!}\sum_{\pi\in \B(V,\Vs,A,\sigma)}\mathbb{E}_{(G^A,\G^\As)\sim\Qc'_{A,\As,\sigma}[\cdot\mid \pi^*]}\prod_{O\in \mathcal O_\pi^A}\prod_{e\in O}\ell(G_e,\G_{\Pi(e)})\,.
	\end{equation}
	A crucial fact is that for each $\pi\in \B(V,\Vs,A,\sigma)$ and each orbit $O\in \mathcal O_\pi^A$, it holds
	\begin{equation}\label{eq:incoplete-orbit-<=1}
	\mathbb{E}_{(G^A,\G^\As)\sim \Qc_{A,\As,\sigma}[\cdot\mid \pi^*]}\left[\prod_{e\in O}\ell(G_e,\G_{\Pi(e)})\mid O\notin \mathcal J_\pi\right]\le 1\,.
	\end{equation}
In other words, for those cycles not entirely contained in $(\mathcal H_{\pi^*})^A$, their contribution to the likelihood ratio is negligible. 
The proof of \eqref{eq:incoplete-orbit-<=1} can be found in \cite[Lemma 3.1]{DD22+} via an explicit computation. Intuitively, \eqref{eq:incoplete-orbit-<=1} tells us that the main contribution of \eqref{eq:bound-the-likelihoodratio-1} comes from orbits in $\mathcal J_\pi$ and this motivates us to take conditional expectation with respect to the random set $\mathcal J_\pi$. For any two fixed $\pi^*,\pi\in\B(V,\Vs,A,\sigma)$, by averaging over conditional expectation given $\mathcal J_\pi$, 
\begin{align}\label{eq:bound-the-likelihoodratio-2}
		&\ \mathbb{E}_{(G^A,\G^\As)\sim \Qc_{A,\As,\sigma}[\cdot\mid \pi^*]}\prod_{O\in \mathcal O_\pi^A}\prod_{e\in O}\ell(G_e,\G_{\Pi(e)})\nonumber\\
		=&\ \mathbb{E}_{\mathcal J_\pi\sim \Qc'_{A,\As,\sigma}[\cdot\mid \pi^*]}\left[p^{-|\mathcal J_\pi|}\mathbb{E}_{(G^A,\G^\As)\sim \Qc_{A,\As,\sigma}'[\cdot\mid \pi^*]}\left[\prod_{O\in \mathcal O_\pi^A\setminus \mathcal J_\pi}\prod_{e\in O}\ell(G_e,\G_{\Pi(e)})\mid \mathcal J_\pi\right]\right]
\end{align}
For any realization $J$ of $\mathcal J_\pi$, conditioned on $\mathcal J_\pi=J$, we may upper-bound the expectation in the inner layer of \eqref{eq:bound-the-likelihoodratio-2} by 
\begin{equation}
	\label{eq:bound-the-likelihoodratio-1.5}
	\frac{1}{\Qc_{A,\As,\sigma}[\Gc_A^2\cap \Gc_A^3\mid{\pi^*,\mathcal J_\pi=J}]}\mathbb{E}_{(G^A,\G^\As)\sim \Qc_{A,\As,\sigma}[\cdot\mid \pi^*,\mathcal J_\pi=J]}\left[\prod_{O\in \mathcal O_\pi^A\setminus J}\prod_{e\in O}\ell(G_e,\G_{\Pi(e)})\right]\,,
\end{equation}
where the term $\frac{1}{\Qc_{A,\As,\sigma}[\Gc_A^2\cap \Gc_A^3\mid \pi^*,\mathcal J_\pi=J]}$ emerges because we move from $\Qc_{A,\As,\sigma}'[\cdot\mid\pi^*, \mathcal J_\pi=J]$ to $\Qc_{A,\As,\sigma}[\cdot\mid \pi^*,\mathcal J_\pi=J]$ in the expectation.
Furthermore, note that under the law $\Qc_{A,\As,\sigma}[\cdot\mid \pi^*,\mathcal J_\pi=J]$, for distinct orbits $O\in \mathcal O_\pi^A\setminus J$, the families of random variable $\{(G_e,\G_{\Pi(e)}):e\in O\}$ are mutually independent with law $\Qc_{A,\As,\sigma}[\cdot\mid O\notin \mathcal J_\pi]$. Thus the last expectation in \eqref{eq:bound-the-likelihoodratio-1.5} is no more than $1$ by \eqref{eq:incoplete-orbit-<=1}. Combined this with  \eqref{eq:bound-the-likelihoodratio-2} and \eqref{eq:bound-the-likelihoodratio-1.5} gives that the left hide side of \eqref{eq:bound-the-likelihoodratio-2} is bounded by
\begin{equation}
	\label{eq:bound-the-likelihoodratio-3}
	\begin{aligned}
		 \mathbb{E}_{\mathcal J_\pi\sim \Qc'_{A,\As,\sigma}[\cdot\mid \pi^*]}\frac{p^{-|\mathcal J_\pi|}}{\Qc_{A,\As,\sigma}[\Gc_A^2\cap \Gc_A^3\mid\pi^*, \mathcal J_\pi]}=\mathbb{E}_{(G^A,\G^\As)\sim \Qc'_{A,\As,\sigma}[\cdot\mid \pi^*]}\frac{p^{-|\mathcal J_\pi|}}{\Qc_{A,\As,\sigma}[\Gc_A^2\cap \Gc_A^3\mid\pi^*, \mathcal J_\pi]}\,.
	\end{aligned}
\end{equation}
Plugging \eqref{eq:bound-the-likelihoodratio-3} into \eqref{eq:bound-the-likelihoodratio-1} yields the desired result \eqref{eq:bound-the-likelihoodratio-0}.
\end{proof}
The following lemma bounds the probabilities appearing in \eqref{eq:bound-the-likelihoodratio-0} from below. 
\begin{lemma}
	\label{lem:probability-of-Gc_A^2capGc_A^3}
	There exists a constant $c_0=c_0(\lambda)>0$ such that for large enough $n$, 
	\begin{equation}
		\label{eq:probability-of-Gc_A^2capGc_A^3-uncondition}
		\Qc_{A,\As,\sigma}[\Gc_A^2\cap \Gc_A^3]\ge c_0^K\,.
    \end{equation}
    Further, for any $\pi^*,\pi\in \B(V,\Vs,A,\sigma)$ and any realization $J$ of $\mathcal J_\pi$ satisfying
    $\Qc_{A,\As,\sigma}[\Gc_A^2\cap \Gc_A^3\mid H(J)\subset (\mathcal H_{\pi^*})^A]>0$, it holds
    \begin{equation}
    	\label{eq:probability-of-Gc_A^2capGc_A^3-condition}
    	 \Qc_{A,\As,\sigma}[\Gc_A^2\cap \Gc_A^3\mid\pi^*,\mathcal J_\pi=J]\ge c_0^{K+|J|}\,.
    \end{equation}
\end{lemma}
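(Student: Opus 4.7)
For the unconditional bound \eqref{eq:probability-of-Gc_A^2capGc_A^3-uncondition}, I would use that under $\Qc_{A,\As,\sigma}[\,\cdot\mid\pi^*]$ each edge in $E_0^A$ is independently present in $(\mathcal H_{\pi^*})^A$ with probability $\lambda/n$. Consider the event $\mathcal F$ that no edge of $(\mathcal H_{\pi^*})^A$ is incident to $A$; since $K=\lfloor n^\beta\rfloor$ with $\beta<1$ gives $K/n=o(1)$, this event has probability $(1-\lambda/n)^{K(n-K)}=\exp(-\lambda K(1+o(1)))\geq c_1^K$ for some constant $c_1=c_1(\lambda)>0$. Conditionally on $\mathcal F$ the $A$-vertices are isolated in $(\mathcal H_{\pi^*})^A$, so $A$ is trivially a good set, while the induced subgraph on $V\setminus A$ is $\mathbf G(n-K,\lambda/n)$ and is admissible with probability $1-o(1)$ by the same argument as Lemma~\ref{lem:G-is-a.s.s.}. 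Combining gives \eqref{eq:probability-of-Gc_A^2capGc_A^3-uncondition}.

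For \eqref{eq:probability-of-Gc_A^2capGc_A^3-condition}, the hypothesis $\Qc_{A,\As,\sigma}[\Gc_A^2\cap\Gc_A^3\mid H(J)\subset(\mathcal H_{\pi^*})^A]>0$ guarantees the existence of some admissible $H'\supset H(J)$ in which $A$ is good; since admissibility (upper bounds on densities, degrees and cycle counts) and the good-set conditions (lower bounds on pairwise distances and on distances to small cycles) are all preserved under passing to a subgraph, $H(J)$ itself is admissible and $A$ is good in $H(J)$. Let $V_J$ denote the vertex set of $H(J)$, so $|V_J|\leq 2|J|$, and consider the event $\tilde{\mathcal F}_J$ that (a) every edge of $(\mathcal H_{\pi^*})^A\setminus H(J)$ avoids $A\cup V_J$, and (b) the induced subgraph $H_0$ of $(\mathcal H_{\pi^*})^A$ on $V\setminus(A\cup V_J)$ is admissible. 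On $\tilde{\mathcal F}_J$, $(\mathcal H_{\pi^*})^A$ decomposes as the disjoint union $H(J)\sqcup H_0$, so admissibility follows from that of the two pieces; moreover distances from $A$-vertices and small cycles near $A$ in $(\mathcal H_{\pi^*})^A$ agree with those in $H(J)$, preserving the good-set property. Hence $\tilde{\mathcal F}_J\subset\Gc_A^2\cap\Gc_A^3$.

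The main computation is then to lower bound $\Qc_{A,\As,\sigma}[\tilde{\mathcal F}_J\mid\pi^*,\mathcal J_\pi=J]$ by $c_0^{K+|J|}$. Under $\Qc_{A,\As,\sigma}[\,\cdot\mid\pi^*]$ the orbits of $\mathcal O_\pi^A$ are independent and the conditioning $\mathcal J_\pi=J$ factorizes as: for $O\in J$ all edges present, for $O\in\mathcal O_\pi^A\setminus J$ at least one edge absent. Split $\mathcal O_\pi^A\setminus J$ into $\mathcal O^{\mathrm{out}}$ (orbits with an edge incident to $A\cup V_J$) and $\mathcal O^{\mathrm{in}}$ (the rest). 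For (a), each $O\in\mathcal O^{\mathrm{out}}$ contributes a factor $(1-\lambda/n)^{m_O}/(1-(\lambda/n)^{|O|})\geq(1-\lambda/n)^{m_O}$, where $m_O$ is the number of its edges incident to $A\cup V_J$; since $\sum_{O\in\mathcal O^{\mathrm{out}}}m_O$ is bounded by the total number of edges of $E_0^A$ incident to $A\cup V_J$, which is at most $n(K+2|J|)$, the product over $\mathcal O^{\mathrm{out}}$ is at least $\exp(-\lambda(K+2|J|)(1+o(1)))\geq c_1^{K+|J|}$. For (b), given (a), the remaining randomness on $V\setminus(A\cup V_J)$ is free Bernoulli$(\lambda/n)$ on the unforced portions of $\mathcal O^{\mathrm{out}}$-orbits, together with $\mathcal O^{\mathrm{in}}$-orbits conditioned on ``at least one edge absent''. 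Every nonempty up-set in $\{0,1\}^{|O|}$ contains the all-one configuration, so the conditional law of each $\mathcal O^{\mathrm{in}}$-orbit is stochastically dominated by independent Bernoulli$(\lambda/n)$; hence $H_0$ is stochastically dominated by $\mathbf G(n-K-|V_J\setminus A|,\lambda/n)$ and is admissible with probability $1-o(1)$ by Lemma~\ref{lem:G-is-a.s.s.}. Multiplying the two bounds yields $c_1^{K+|J|}(1-o(1))\geq c_0^{K+|J|}$ for a slightly smaller constant $c_0$.

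The main obstacle is the factorization step in the last paragraph: one must carefully verify that after forcing the $m_O$ edges of each $\mathcal O^{\mathrm{out}}$-orbit at $A\cup V_J$ to be absent, the remaining edges of that orbit (which lie inside $V\setminus(A\cup V_J)$) become unconditional independent Bernoulli$(\lambda/n)$, and that the up-set stochastic domination for $\mathcal O^{\mathrm{in}}$-orbits carries through uniformly. A secondary but essential point is the monotonicity at the start of the second paragraph, which turns the hypothesis into a clean structural statement about $H(J)$ alone.
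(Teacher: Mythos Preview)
Your approach differs substantially from the paper's. The paper argues via the FKG inequality: both $\Gc_A^2$ and $\Gc_A^3$ are decreasing in the intersection graph, so FKG separates them, and then $\Gc_A^2$ is handled by invoking \cite[Lemma~3.3]{DD22+} (which the paper explicitly flags as the place where conditions (iv) and (v) of admissibility are used) to get $\Qc_{A,\As,\sigma}[\Gc_A^2\mid H(J)\subset(\mathcal H_{\pi^*})^A]\ge c_2^{|J|}$. You instead build an explicit sub-event $\tilde{\mathcal F}_J$ and bound its conditional probability by a direct orbit-by-orbit analysis. The factorization in your last paragraph is correct: once the $m_O$ edges touching $A\cup V_J$ are forced absent, the remaining edges of an $\mathcal O^{\mathrm{out}}$-orbit really are unconditional Bernoulli$(\lambda/n)$ (the ``at least one absent'' constraint is already satisfied), and your ``every nonempty up-set contains the all-one point'' observation gives the stochastic domination for $\mathcal O^{\mathrm{in}}$-orbits cleanly. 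Your treatment of $\Gc_A^3$ via the stronger isolation event (edges avoiding $A\cup V_J$, not just $A$) is also correct.

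However, there is a genuine gap in your second paragraph. The claim that admissibility of the disjoint union $H(J)\sqcup H_0$ follows from admissibility of each piece is \emph{false} for condition (v) of Definition~\ref{def:good-event}: cycle counts are additive over vertex-disjoint components, so you only get at most $2n^{\delta_1 k}$ many $k$-cycles in the union, not $n^{\delta_1 k}$. The hypothesis $\Qc_{A,\As,\sigma}[\Gc_A^2\cap\Gc_A^3\mid H(J)\subset\cdot]>0$ guarantees $H(J)$ has at most $n^{\delta_1 k}$ many $k$-cycles, but no better; if this bound is nearly saturated for some $k$ (and nothing rules this out), you would need $H_0$ to have essentially no $k$-cycles, which does \emph{not} hold with probability $1-o(1)$ when $\lambda>1$. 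Patching this requires showing that an Erd\H{o}s--R\'enyi-like graph with the $|J|$ edges of $H(J)$ planted remains admissible with probability at least $c^{|J|}$, which is precisely the content of the external lemma the paper cites. So your construction recovers $\Gc_A^3$ but not $\Gc_A^2$, and the missing ingredient is of the same order of difficulty as what the paper outsources.
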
 
\begin{proof}
	Clearly, both $\Gc_A^2$ and $\Gc_A^3$ are decreasing events measurable with respect to $\mathcal H_{\pi^*}$. It can be shown in the same way as Lemma~\ref{lem:G-is-a.s.s.} that $\Qc_{A,\As,\sigma}[\Gc_A^2]=1-o(1)$, and$$\Qc_{A,\As,\sigma}[\Gc_{A}^3]\ge \Qc_{A,\As,\sigma}[\{\mbox{There is no edge between }A\mbox{ and }V\setminus A\mbox{ in }(\mathcal H_{\pi^*})^A\}]\ge c_1^K$$
	for some constant $c_1=c_1(\lambda)>0$. Applying FKG inequality to the events $\Gc_A^2$ and $\Gc_A^3$ yields that $\Qc_{A,\As,\sigma}[\Gc_A^2\cap\Gc_{A}^3]\ge (1-o(1))c_1^{K}$.
		
	Further, $\{\mathcal J_\pi=J\}$ can be written as the intersection of $\{H(J)\subset (\mathcal H_{\pi^*})^A\}$ (here $\subset$ means being a subgraph of) and a decreasing event $\mathcal G^4_{J}$ (which is measurable with respect to edges not in $E(J)$). Let $\mathcal A$ be the event that there is no edge incident to $A$ except those edges in  $E(J)$. Since $\Qc_{A,\As,\sigma}[\Gc_A^3\mid H(J)\subset(\mathcal H_{\pi^*})^A]>0$, we have 
	$
	\mathcal A\cap \{H(J)\subset (\mathcal{H}_{\pi^*})^A\} \subset \Gc_A^3 
	$. Thus,
	\begin{align*}
	\ \Qc_{A,\As,\sigma}[\Gc_A^2\cap \Gc_A^3\mid \pi^*,\mathcal J_\pi=J] &\geq\Qc_{A,\As,\sigma}[\Gc_A^2\cap \mathcal A\mid \pi^*,\mathcal J_\pi=J]\\
	& \geq \Qc_{A,\As,\sigma}[\Gc_A^2\cap \mathcal A\mid \pi^*, H(J) \subset \mathcal (H_{\pi^*})^A]\,,
	\end{align*}
	where for the second inequality we have applied FKG inequality to the conditional measure $\Qc_{A,\As,\sigma}[\cdot\mid \pi^*, H(J) \subset \mathcal (H_{\pi^*})^A]$ (which is a product measure outside of $E(J)$) and the fact that $\mathcal G^4_J$, $\Gc_A^2$ and  $\mathcal A$ remain to be decreasing events restricted to the space satisfying  $H(J) \subset \mathcal (H_{\pi^*})^A$ (i.e., in the space for the conditional measure $\Qc_{A,\As,\sigma}[\cdot\mid \pi^*, H(J) \subset \mathcal (H_{\pi^*})^A]$). Applying FKG again, we get that
	\begin{align*}
	\ \Qc_{A,\As,\sigma}[\Gc_A^2\cap \Gc_A^3\mid \pi^*,\mathcal J_\pi=J] 
	\geq  \Qc_{A,\As,\sigma}[\mathcal \Gc_A^2\mid \pi^*,H(J)\subset (\mathcal H_{\pi^*})^A]\Qc_{A,\As,\sigma}[\mathcal A\mid \pi^*,H(J)\subset (\mathcal H_{\pi^*})^A]\,.
	\end{align*}
	Also, by a similar argument as in the proof of \cite[Lemma 3.3]{DD22+} we can show that 
	$$\Qc_{A,\As,\sigma}[\Gc_A^2\mid \pi^*,H(J)\subset (\mathcal H_{\pi^*})^A]\ge (1-o(1))c_2^{|J|}$$ for some constant $c_2=c_2(\lambda)>0$ (indeed, this is where we use (iv) and (v) in admissibility).
Combined with the preceding inequality, it yields the lemma by picking $c_0=c_1c_2/2$.
\end{proof}
Note that any possible realization $J$ for $\mathcal J_\pi \sim \Qc'_{A,\As,\sigma}$ satisfies $\Qc_{A,\As,\sigma}[\Gc_A^2\cap \Gc_A^3\mid H(J)\subset (\mathcal H_{\pi^*})^A]>0$. From Lemma~\ref{lem:probability-of-Gc_A^2capGc_A^3}, we see \eqref{eq:bound-the-likelihoodratio-0} is bounded by $\exp(O(K))$ times 
\begin{equation}
	\label{eq:bound-exponential-moment}
	\mathbb{E}_{(\pi^*,G^A,\G^\As)\sim\Qc_{A,\As,\sigma}'}\frac{1}{(n-K)!}\sum_{\pi\in \B(V,\Vs,A,\sigma)}(c_0p)^{-|\mathcal J_\pi|}\,.
\end{equation}
Hence in order to prove Proposition~\ref{prop:bound-I2} it suffices to show that $\eqref{eq:bound-exponential-moment}$ is $\exp(o(K\log n))$. The idea is to bound the term in the expectation deterministically for any $\mathcal (H_{\pi^*})^A$ satisfying $\Gc_A^2\cap \Gc_A^3$. Our strategy is to enumerate all possible realization $J$ of $\mathcal J_\pi$ and control the number of $\pi$ such that $\mathcal J_\pi=J$, then sum over $J$. To this end, we need to introduce several notations. 

For two finite simple graphs $H$ and $\mathcal H$, a \emph{labeled embedding} of $H$ into $\mathcal H$ is an injective map $\iota:H\to \mathcal H$, such that $(\iota(u),\iota(v))$ is an edge of $\mathcal H$ when $(u,v)$ is an edge of $H$. Further, we define an \emph{unlabeled embedding} of $H$ into $\mathcal H$ as an isomorphic class of labeled embeddings:  two labeled embeddings $\iota_1,\iota_2:H\to\mathcal H$ is said to be equivalent if and only if there is an automorphism $\phi:H\to H$, such that $\iota_2=\iota_1\circ \phi$. Let $t(H,\mathcal H)$
 be the total number of unlabeled embedding of $H$ into $\mathcal H$. Then the total number of labeled embeddings of $H$ into $\mathcal H$ is given by $\operatorname{Aut}(H)t(H,\mathcal H)$, where $\operatorname{Aut}(H)$ denotes the number of automorphisms of $H$ to itself. For each isomorphic class of finite simple graphs $\mathcal C$, pick a representative element $H_\mathcal{C}$ of $\mathcal C$ and fix it. 
 
 Now let $\mathcal H=\mathcal (H_{\pi^*})^A=(V,(\mathcal E_{\pi^*})^A)$ be the $\pi^*$-intersection graph of $G^A$ and $\G^\As$. Let $\mathfrak{H}_\ell$  be the collection of connected subgraphs of $\mathcal H$ which contain $\ell$ vertices in $V\setminus A$ and at least one vertex in $A$, let $\mathfrak{C}_\ell$ (respectively $\mathfrak T_\ell$) be the collection of all such representatives $H_\mathcal{C}$ which are connected non-tree subgraphs (respectively trees) with $\ell$ vertices so that $t(H_\mathcal{C},\mathcal H)>0$.  Note that by definition, any two graphs in $\mathfrak{C}_\ell$ or $\mathfrak T_\ell$ are non-isomorphic, while two graphs in $\mathfrak H_\ell$ might be isomorphic. For each $H\in \mathfrak{H}_\ell$, fix a vertex $v_H\in H\cap A$ and denote by $\operatorname{Aut}(H,v_H)$ the number of  automorphisms of $H$ to itself fixing $v_H$.  
 When $\mathcal H$ is admissible, we have the following bounds on subgraph counts of $\mathfrak{H}_\ell,\mathfrak{C}_\ell$ and $\mathfrak{T}_\ell$.
 \begin{lemma}\label{lem-apply-good-set}
 	Under the condition $\Gc_A^2\cap \Gc_A^3$, we have for any $\ell\ge 2$, the following hold:
 	\begin{align}
 		&\sum_{H\in \mathfrak H_\ell}\operatorname{Aut}(H,v_H)\le 2K(2^\xi\log n)^{4\ell}\,,\label{eq:bound-H}\\
 		&\sum_{C\in \mathfrak C_\ell}\operatorname{Aut}(C)t(H,\mathcal H)\le \ell^3(2^{\xi+1}n^{\delta_1})^\ell\,,\label{eq:bound-C}\\
 		&\sum_{T\in \mathfrak T_\ell}\operatorname{Aut}(T)t(T,\mathcal H)\le n(4\log n)^{2(\ell-1)}\,.\label{eq:bound-T}
 	\end{align}
 \end{lemma}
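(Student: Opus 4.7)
The plan is to prove each of the three bounds by a combinatorial enumeration that exploits the admissibility of $\mathcal H=(\mathcal H_{\pi^*})^A$ and, for \eqref{eq:bound-H}, the good-set structure at $v_H$. Throughout, I use the identity that $\operatorname{Aut}(G)\,t(G,\mathcal H)$ equals the number of injective edge-preserving maps $V(G)\to V(\mathcal H)$, and similarly its rooted version counts rooted injective edge-preserving maps, so the left-hand sides of \eqref{eq:bound-T}, \eqref{eq:bound-C}, \eqref{eq:bound-H} all admit interpretations as counts of labeled (rooted) embeddings into $\mathcal H$.

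For \eqref{eq:bound-T}, I read $\sum_{T\in\mathfrak T_\ell}\operatorname{Aut}(T)\,t(T,\mathcal H)$ as the total number of labeled embeddings of trees on $\ell$ vertices into $\mathcal H$, and bound it by a standard Euler-tour count. A rooted labeled tree embedding corresponds canonically (via DFS) to a walk in $\mathcal H$ of length $2(\ell-1)$ that traverses each tree edge twice, and by admissibility (iii) each step offers at most $\log n$ choices. With $n$ choices for the root, plus a small slack to absorb the rooted-versus-unrooted conversion, this yields the stated bound $n(4\log n)^{2(\ell-1)}$.

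For \eqref{eq:bound-C}, each $C\in\mathfrak C_\ell$ is connected non-tree and so contains a cycle of some length $k\in[3,\ell]$. Admissibility (v) bounds the number of $k$-cycles in $\mathcal H$ by $n^{\delta_1 k}$, giving at most $\sum_{k=3}^\ell O(k)n^{\delta_1 k}$ choices for a distinguished cycle in a labeled embedding. The remaining $\ell-k$ vertices of $C$ attach to this cycle as a forest (enumerated by the Euler-tour bound used above), and the extra chord edges are controlled by admissibility (i): since $|E(C)|\le \xi\ell$ there are at most $(\xi-1)\ell+1$ chords, each chosen from at most $\binom{\ell}{2}$ pairs, contributing a factor $2^{O(\xi\ell)}$. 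The polynomial prefactor $\ell^3$ absorbs the choices of cycle length, rooting, and labeling.

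For \eqref{eq:bound-H}, I first upper-bound $\sum_H\operatorname{Aut}(H,v_H)\le \sum_{v\in A}\sum_{H\ni v,\,H\in\mathfrak H_\ell}\operatorname{Aut}(H,v)$, which exposes the factor $K=|A|$ explicitly. For each fixed $v\in A$, the good-set property provides two crucial structural inputs: no $k$-cycle with $k\le C$ meets the ball of radius $C$ around $v$, so the local neighborhood of $v$ in any admissible $H\ni v$ is tree-like; and any other $A$-vertex of $H$ lies at distance greater than $2C+2$. I then enumerate each such $H$ by a spanning-tree-plus-chords argument rooted at $v$: the tree contributes $(\log n)^{O(\ell)}$ via Euler tour with degree bound (iii), and the chords contribute $2^{O(\xi\ell)}$ via density bound (i). The exponent $4\ell$ in $(2^\xi\log n)^{4\ell}$ and the leading constant $2$ in $2K$ arise from the slack needed to match the automorphism weight $\operatorname{Aut}(H,v)$ to labeled rooted embedding counts and to allow the two factors to appear in a single combined exponent.

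The principal obstacle is \eqref{eq:bound-H}: one must cleanly coordinate the automorphism-weighted sum with the enumeration of labeled rooted embeddings, and smoothly interpolate between the tree-like regime near $v$ (controlled by the good set together with (iii) and (iv)) and the density-bounded regime in the bulk (controlled by (i) and (v)), so that both regimes contribute within the same exponent $4\ell$ without blow-up. A secondary technical point is to verify that the Euler-tour argument in \eqref{eq:bound-T} correctly serves as a subroutine for the spanning-tree pieces appearing in \eqref{eq:bound-C} and \eqref{eq:bound-H}, and that the chord-counting step uses admissibility (i) uniformly over all candidate chord-endpoint pairs.
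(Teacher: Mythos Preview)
Your outlines for \eqref{eq:bound-T} and \eqref{eq:bound-C} are essentially the paper's approach (which is deferred to \cite{DD22+}): Euler-tour counts for trees, and cycle-plus-forest-plus-chords for non-trees, with admissibility (iii), (v), (i) supplying the needed bounds.

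For \eqref{eq:bound-H}, however, there is a genuine gap. You correctly set up the spanning-tree-plus-chords enumeration rooted in $A$, but you misidentify how the good-set property enters. The crucial point is not the local tree-like structure near $v$ (that observation is relevant to the later Lemma~\ref{lem:apply-good-set-2}, not here), nor is the exponent $4\ell$ mere ``slack.'' The issue is that $H\in\mathfrak H_\ell$ has $\ell$ vertices in $V\setminus A$ but an \emph{a priori} uncontrolled number of vertices in $A$, so the spanning tree could be arbitrarily large and your claimed $(\log n)^{O(\ell)}$ Euler-tour bound is unjustified as stated. What the paper actually uses is this: since $\mathcal H=(\mathcal H_{\pi^*})^A$ has no edges within $A$, every $A$-vertex of the connected graph $H$ must have a neighbor in $V\setminus A$; and by the good-set distance condition, no vertex of $V\setminus A$ can be adjacent to two distinct $A$-vertices (that would put two $A$-vertices at distance $2$). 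Hence $|H\cap A|\le \ell$, so $|V(H)|\le 2\ell$, and the DFS contour of the spanning tree has length at most $4\ell$, producing exactly the factor $(\log n)^{4\ell}$; the chord count then gives $2^{2\xi\ell}$ via admissibility (i) on the at most $2\ell$ vertices. Your ``two structural inputs'' paragraph mentions the distance condition but never converts it into this vertex-count bound, which is the entire engine of the argument.
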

 \begin{proof}
 \eqref{eq:bound-C} and \eqref{eq:bound-T} are proved in the same manner as in \cite[Lemma 3.5]{DD22+}. The proof of \eqref{eq:bound-H} also largely shares the same method with that in \cite[Lemma 3.5]{DD22+}. But since a modification is required, in what follows we provide a proof for \eqref{eq:bound-H}.
 	Note that $\sum_{H\in \mathfrak H_\ell}\operatorname{Aut}(H,v_H)$ is just the total number for labeled embedding of $H$ into $\mathcal H$ fixing $v_H$. Each labeled embedding of $H\in \mathfrak{H}_\ell$ into $\mathcal H$ can be constructed as follow: (1) choose a labeled spanning tree $T$ rooted in $A$ which intersects with $V\setminus A$ on $\ell$ vertices; and (2) add edges in $\mathcal H$ within the vertex set of $T$ to $T$ and get the final labeled embedding. By $\Gc_A^3$, each vertex $v\in V\setminus A$ is connected with at most one vertex in $A$, since otherwise there would be two vertices in $A$ having distance $2$ which is a contradiction to good set. Thus the size of $T$ in the first step is bounded by $2\ell$. Since each labeled tree with no more than $2\ell$ vertices can be encoded by a contour (according to depth-first search) with length no more than $4\ell$ starting from $A$, the number of choices for $T$ is bounded by $2K(\log n)^{4\ell}$ on the event $\Gc_A^2$ (recall (iii) in admissibility). When $T$ is fixed, there are no more than $2\xi\ell$ edges in $\mathcal H$ within the vertex set of $T$ by (i) in admissibility, so the number of ways for adding edges to $T$ is bounded by $2^{2\xi\ell}$. This gives the proof of \eqref{eq:bound-H}. 
 \end{proof}

For any realization $J$ of $\mathcal J_\pi$, $J$ can be decomposed into connected components, some of them intersect $A$ while others do not. We consider a \emph{configuration}  $\Lambda$ consisting of non-negative integers $r,s,t$, positive integers $l_j,x_j,m_k,y_k$ for $j\in [s],k\in [t]$ and distinct graphs $H_i\in \bigcup_{\ell\ge 2}\mathfrak{H}_\ell$ for $i\in[r]$, $C_j\in \mathfrak{C}_{l_j}$ for $j\in[s]$ and $T_k\in\mathfrak{T}_{m_k}$ for $k\in [t]$.  (There is nothing mathematically deep in \emph{configuration}, and this is mainly for notational convenience.)
For each such configuration $\Lambda$,
define $\Pi(\Lambda)$ to be the set of $\pi\in \B(V,\Vs,A,\sigma)$ such that the components of $\mathcal J_\pi$ which intersect with $A$ are exactly $H_i,i\in [r]$, and the other components consist of  $x_j$ copies of $C_j$ for $j\in[s]$ and $y_k$ copies of $T_k$ for $k\in [t]$. The next lemma provides an upper bound on $\Pi(\Lambda)$.
\begin{lemma}
	\label{lem:bound-number-of-pi}
	With aforementioned notations, we have
	\begin{equation}
		\label{eq:bound-number-of-pi}
		\begin{aligned}
		|\Pi(\Lambda)|\le&\ \left(n-K-\sum_{i\in [r]}|H_i\setminus A|-\sum_{j\in [s]}x_jl_j-\sum_{k\in [t]}y_km_k\right)!\times \prod_{i\in [r]}\operatorname{Aut}(H_i,v_{H_i})\\
		\times&\ \prod_{j\in [s]}\left(\operatorname{Aut}(C_j)t(C_j,\mathcal H)\right)^{x_j}\times \prod_{k\in[t]}\left(\operatorname{Aut}(T_k)t(T_k,\mathcal H)\right)^{y_k}\,.
		\end{aligned}
	\end{equation}
\end{lemma}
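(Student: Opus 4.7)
The plan is to prove Lemma~\ref{lem:bound-number-of-pi} by an explicit injective encoding of each $\pi\in\Pi(\Lambda)$. Since $\pi|_A=\sigma$ is held fixed and $\pi^*$ is fixed throughout the proof, specifying $\pi$ reduces to specifying, for each component of $\mathcal J_\pi$, how $\phi=\pi^{-1}\circ\pi^*$ acts on that component's vertex set, together with an arbitrary bijection on the remaining vertices of $V\setminus A$ not covered by any component.

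First I would handle the uncovered vertices: the number of bijections between the remaining $n-K-\sum_i|H_i\setminus A|-\sum_j x_jl_j-\sum_k y_km_k$ vertices in $V$ and the corresponding remaining vertices in $\Vs$ is exactly the factorial in the stated bound. Next, for each component $H_i$ intersecting $A$ (a specified labeled subgraph of $\mathcal H$ with marked vertex $v_{H_i}\in A$), the vertex set is already fixed and the only freedom is how $\phi$ acts on $V(H_i)\setminus A$. By Lemma~\ref{lem-orbit-structure}, the edge orbits composing $H_i$ correspond to node cycles of $\phi$, so once $H_i$ is viewed as a labeled graph with $v_{H_i}$ marked, the number of valid $\phi$-restrictions is bounded by the number of automorphisms of $H_i$ fixing $v_{H_i}$, namely $\operatorname{Aut}(H_i,v_{H_i})$. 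For each $C_j$ (a non-tree isomorphism class) appearing $x_j$ times, each copy corresponds to a concrete subgraph of $\mathcal H$ isomorphic to $C_j$ ($t(C_j,\mathcal H)$ choices per copy) together with a valid $\phi$-action on its vertex set (at most $\operatorname{Aut}(C_j)$ choices), so the $x_j$ copies contribute at most $(\operatorname{Aut}(C_j)t(C_j,\mathcal H))^{x_j}$, overcounting only the ordering of the copies which is harmless for an upper bound. The $T_k$-contribution is bounded identically.

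Multiplying these factors gives the claimed inequality. The main obstacle is the automorphism-count step: one must verify carefully that, given a component subgraph of $\mathcal H$ together with any boundary constraint coming from $\sigma$, the number of ways to define $\phi$ on its vertex set so that the edges form the required union of edge orbits (cycles or chains in the sense of Lemma~\ref{lem-orbit-structure}) is at most the corresponding automorphism count. The good-set condition $\Gc_A^3$ from Definition~\ref{def:good-set} is crucial here: it guarantees that components of $\mathcal J_\pi$ touching $A$ are pairwise far apart and avoid small cycles, so the encoding above is injective and the automorphism bounds do not suffer from interference between different components.
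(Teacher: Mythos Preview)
Your plan is essentially the same two–step decomposition as the paper's: split $\pi$ into its action on $V(H(\mathcal J_\pi))$ and its action on the remaining vertices, bound the latter by the factorial, and bound the former using that $\phi=\pi^{-1}\circ\pi^*$ restricts to a graph isomorphism of $H(\mathcal J_\pi)$ permuting components. Two points deserve tightening, however.

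First, for the $C_j$ (and $T_k$) contribution you write that each copy carries ``a valid $\phi$-action on its vertex set (at most $\operatorname{Aut}(C_j)$ choices).'' This is not literally true: $\phi$ need not send a copy $D$ to itself, it may send $D$ isomorphically onto a \emph{different} copy $D'$. The paper handles this by first fixing the unordered set of $x_j$ copies ($\le t(C_j,\mathcal H)^{x_j}/x_j!$ choices) and then counting all $\phi$ compatible with that set as (permutation of copies)$\times$(isomorphism per copy) $= x_j!\,\operatorname{Aut}(C_j)^{x_j}$; the $x_j!$'s cancel to give $(\operatorname{Aut}(C_j)t(C_j,\mathcal H))^{x_j}$. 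Your ``ordered copies plus one automorphism each'' encoding reaches the same bound, but only once you make explicit that the \emph{ordering} of the list records the permutation $\tau$ of copies induced by $\phi$ (e.g.\ the $i$-th entry is $\phi$ applied to the $i$-th copy in some canonical order), after which $\phi|_{D_i}:D_i\to D_{\tau(i)}$ indeed lies in a set of size $\operatorname{Aut}(C_j)$. Without that clarification the step as written has a gap.

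Second, your final paragraph asserts that the good-set condition $\Gc_A^3$ is ``crucial here'' for injectivity of the encoding. It is not: the paper's proof of this lemma is purely combinatorial and uses only the definition of $\mathcal J_\pi$ as a union of complete edge orbits (so that $\Phi$ and $\Phi^{-1}$ preserve the edge set, forcing $\phi$ to act by graph isomorphisms on components). The good-set hypothesis enters only later, in Lemmas~\ref{lem-apply-good-set} and~\ref{lem:apply-good-set-2}, to control $\sum_{H\in\mathfrak H_\ell}\operatorname{Aut}(H,v_H)$ and $|E(H)|$; it plays no role in bounding $|\Pi(\Lambda)|$ itself. Likewise, the citation of Lemma~\ref{lem-orbit-structure} is not the right justification—that lemma concerns orbits restricted to $(E_0)_A$, whereas here the relevant fact is simply that full edge orbits are $\Phi$-invariant.
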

\begin{proof}
	Let $H_\Lambda$ be a graph which is the disjoint union of $H_i$ for $i\in [r]$, $x_j$ copies of $C_j$ for $j\in [s]$ and $y_k$ copies of $T_k$ for $k\in [t]$. 
	First we choose an unlabeled embedding $\iota:H_\Lambda\to \mathcal H$ with $\iota|_{H_i}=\operatorname{id},\forall i\in [r]$ and $\iota(H_\Lambda\setminus\bigcup_{i\in [r]} H_i)\cap A=\emptyset$. The number of such choices is bounded by 
	\[
	\prod_{j\in [s]}\binom{t(C_j,\mathcal H)}{x_j}\prod_{k\in[t]}\binom{t(T_k,\mathcal H)}{y_k}\le\prod_{j\in [s]}\frac{t(C_j,\mathcal H)^{x_j}}{x_j!}\prod_{k\in [t]}\frac{t(T_k,\mathcal H)^{y_k}}{y_k!}\,.
	\]
	
	For fixed $\iota$, any $\pi\in\B(V,\Vs,A,\sigma)$ such that $\mathcal J_\pi=\iota(H_\Lambda)$ can be decomposed into two permutations $\pi_1$ and $\pi_2$ on $A\cup \iota(H_\Lambda)$ and $V\setminus(A\cup\iota(H_\Lambda))$, respectively.  The number of choices for $\pi_2$ is at most $$\left(n-K-\sum_{i\in [r]}|H_i\setminus A|-\sum_{j\in [s]}x_jl_j-\sum_{k\in [t]}y_km_k\right)!\,.$$
	For $\pi_1$, we have the following crucial observation: for any $u\in\iota(H_\Lambda)$, $\pi_1$ inhibits to an isomorphism between the components of $\iota(H_\Lambda)$ containing $u$ and $\pi_1(u)$. That is to say, for any $u,v\in\iota( H_\Lambda)$, whenever $v$ is adjacent to $u$, $\pi_1(v)$ is adjacent to $\pi_1(u)$ and $\pi_1^{-1}(v)$ is adjacent to $\pi_1^{-1}(u)$. This is true because by definition, the edge orbit containing $(u,v)$ is entirely contained in $\mathcal J_\pi$. With such observation and the condition that $\pi_1|_A=\sigma$, we see that $\pi_1$ inhibits to an automorphism of $H_i$ to itself fixing $v_{H_i}$ for each $i\in [r]$. In addition, for any $C_j$ (and similarly for $T_k$), we will ``permute'' its $x_j$ copies of $C_j$ such that $\pi_1$ maps one copy to its image under the permutation, and within each copy we also have the freedom of choosing an arbitrary automorphism of $C_j$. Since the choices for the permutations and automorphisms determine $\pi_1$, we conclude from this that the number of choices for $\pi_1$ is no more than
	\begin{equation*}
	\prod_{i\in [r]}\operatorname{Aut}(H_i,v_{H_i})\prod_{j\in [s]}x_j!\operatorname{Aut}(C_j)^{x_j}\prod_{k\in [t]}y_k!\operatorname{Aut}(T_k)^{y_k}\,.
	\end{equation*}
These bounds altogether yield \eqref{eq:bound-number-of-pi}. 
\end{proof}

We need yet another lemma which bounds the number of edges in $H\in \mathfrak{H}_\ell$. 
\begin{lemma}
	\label{lem:apply-good-set-2}
	There exists some constant $\delta_2>0$, such that for any constant $c>0$ and $n$ large enough, under the condition $\Gc_A^2\cap\Gc_A^3$ it holds 
	\begin{equation}
		\label{eq:the-use-of-Gc_A^3}
		\sup_{H\in \mathfrak{H}_\ell}\frac{(2^\xi \log n)^{4\ell}}{n^\ell (cp)^{|E(H)|}}\le n^{-\delta_2\ell},\forall \ell\ge 1\,.
	\end{equation}
\end{lemma}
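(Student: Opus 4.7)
My plan has three steps: first extract from the good set condition a linear-in-$\ell$ upper bound on $|V(H)|$; second, apply admissibility~(i) to bound $|E(H)|$; third, substitute $p=n^{-\alpha+o(1)}$ and read off the value of $\delta_2$.

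Step~1 is the geometric heart of the proof. Fix $H\in\mathfrak H_\ell$ and write $a=|V(H)\cap A|\ge 1$, so that $|V(H)|=a+\ell$. I claim $|V(H)|\le\frac{C+2}{C+1}\ell+1$. The bound is trivial when $a=1$. If $a\ge 2$, for each $w\in V(H)\cap A$ pick any other $A$-vertex $u\in V(H)$; since $H\subset\mathcal H$, the good set condition yields $\operatorname{d}_H(w,u)\ge \operatorname{d}_{\mathcal H}(w,u)>2C+2$. The first $C+2$ vertices along a shortest $H$-path from $w$ to $u$ all lie in the closed ball $B_{C+1}^H(w)$, so $|B_{C+1}^H(w)|\ge C+2$. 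For distinct $A$-vertices $w_1\neq w_2$ in $V(H)$ these balls are disjoint, because any common point would force $\operatorname{d}_H(w_1,w_2)\le 2(C+1)$, contradicting the good set condition. Summing disjoint balls inside $V(H)$ gives $a(C+2)\le a+\ell$, hence $a\le\ell/(C+1)$, which proves the claim.

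In Step~2 I apply admissibility property~(i) to $V(H)$ to obtain $|E(H)|\le\xi|V(H)|\le\xi\frac{C+2}{C+1}\ell+\xi$. For Step~3, using $\log(cp)=-\alpha\log n+o(\log n)$ uniformly in any fixed $c>0$, the target quantity rewrites as
\[
\exp\!\bigl(-\ell\log n+\alpha|E(H)|\log n+o(\ell\log n)\bigr),
\]
where the error absorbs the $(2^\xi\log n)^{4\ell}$ prefactor and the $c^{-|E(H)|}$ correction (both are $n^{o(\ell)}$ since $|E(H)|=O(\ell)$). Plugging in the bound on $|E(H)|$, the exponent is at most $\ell\log n\bigl(\alpha\xi(C+2)/(C+1)-1\bigr)+o(\ell\log n)$. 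Under $\lambda\le\lambda_*-\varepsilon$ one has $\alpha\xi=(\alpha\varrho+1)/2<1$; choosing $C$ large enough---which the setup's ``large integer'' language permits---forces $\alpha\xi(C+2)/(C+1)<1$, and setting $\delta_2:=\tfrac12\bigl(1-\alpha\xi(C+2)/(C+1)\bigr)$ yields the desired inequality for all $n$ large, uniformly in $\ell\ge 1$.

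The main obstacle is Step~1: translating the purely metric good set hypothesis into a linear-in-$\ell$ bound on $|V(H)|$ with a constant good enough to offset admissibility's factor of $\xi$. The disjoint ball packing is sharp, as witnessed by a star with a non-$A$ center and $a$ arms of length $C+2$, so one cannot hope for a constant better than $(C+2)/(C+1)$; consequently $C$ must be taken large enough that $\alpha\xi(C+2)/(C+1)<1$, after which the remaining manipulations are routine.
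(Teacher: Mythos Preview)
Your packing argument in Step~1 is clean and correct: for $a:=|V(H)\cap A|\ge 2$ the disjoint balls give $a\le\ell/(C+1)$, hence $|V(H)|\le\tfrac{C+2}{C+1}\ell$. The gap is in how you handle $a=1$, where you only have $|V(H)|=\ell+1$ and thus $|E(H)|\le\xi(\ell+1)$. Plugging this into Step~3 yields an exponent bounded by
\[
(\alpha\xi-1)\ell\log n+\alpha\xi\log n+o(\ell\log n),
\]
and the stray $\alpha\xi\log n$ is \emph{not} $o(\ell\log n)$ uniformly in $\ell\ge 1$: for $\ell=1$ the coefficient of $\log n$ is essentially $2\alpha\xi-1$, which is positive whenever $\alpha\xi>\tfrac12$. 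Since $\alpha\xi=(\alpha\varrho+1)/2$ can be arbitrarily close to $1$ in the regime $\lambda\le\lambda_*-\varepsilon$, your bound genuinely fails for small $\ell$. The slip is in the sentence ``the exponent is at most $\ell\log n\bigl(\alpha\xi(C+2)/(C+1)-1\bigr)+o(\ell\log n)$'': the additive $+\xi$ in your edge bound cannot be absorbed this way.

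The paper repairs exactly this regime by a separate case $1\le\ell\le C$, where it invokes the \emph{second} good-set condition (the one you never use): any $A$-vertex is at distance $>C$ from every short cycle, so when $\ell\le C$ the subgraph $H$ must be a tree and has a single $A$-neighbour, giving the sharp bound $|E(H)|\le\ell$ and exponent $(\alpha-1+o(1))\ell\log n$. For intermediate $\ell$ the paper also uses admissibility~(ii) (the $\zeta$-bound on small sets) rather than~(i); only for $\ell\ge n/\log n$ does it fall back on the $\xi$-bound as you do. Your unified approach is elegant and would work once you add a small-$\ell$ patch, but some use of the cycle-avoidance clause in the good-set definition appears unavoidable: admissibility~(i) alone is simply too loose on graphs with $O(1)$ vertices.

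A secondary point: your requirement $\alpha\xi(C+2)/(C+1)<1$ is strictly stronger than the paper's \eqref{eq:C}, since $\xi>1$ makes $\xi/(C+1)>1/C$ possible. This is harmless in principle (one can enlarge $C$ upstream), but worth flagging.
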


\begin{proof}
We remark that the proof of this lemma illustrates the motivation for defining good set.

For any $\ell\ge 1$ and $H\in \mathfrak H_\ell$, denote $I=H\setminus A$ and let $J\subset I$ be the set of vertices in $I$ with an edge connecting  to $A$ in $H$. Then $|I|=\ell$ by definition of $\mathfrak H_\ell$, and any vertex in $J$ is adjacent to exactly one vertex in $A$ as before. Further, we have the observation that for any two vertices $u,v\in J$, the $C$-neighborhoods of $u,v$ are disjoint (since otherwise there would be two vertices in $A$ with graph distance no more than $2C+2$, which contradicts with $\Gc_A^3$). Our proof of \eqref{eq:the-use-of-Gc_A^3} is divided into three cases.
	\begin{itemize}
		\item \emph{Case\ 1: $1\le \ell\le C$.} In this case, $H$ must be a tree, since otherwise there would be a cycle with length no more than $C$ and within distance $C$ from $A$, which contradicts with $\Gc_A^3$. Further, $J$ must be a singleton by the previous observation. Hence $E(H)\le \ell$ for any $H\in\mathfrak H_\ell$ and so
		\begin{equation}
			\label{eq:case1}
			\sup_{H\in \mathfrak{H}_\ell}\frac{(2^\xi \log n)^{4\ell}}{n^\ell (cp)^{|E(H)|}}\le n^{(\alpha-1+o(1))\ell}\,.
		\end{equation}
		\item \emph{Case\ 2: $C<\ell\le n/\log n$.} In this case, the $C$-neighborhood of each $v\in H$ contains at least $C$ vertices, so $|J|\le \ell/C$. In addition, the edges within $I$ is bounded by $\zeta \ell$ from (ii) of admissibility, so $|E(H)|\le (\zeta+C^{-1})\ell$ for any $H\in \mathfrak H_\ell$. Thus
		\begin{equation}
			\label{eq:case2}
			\sup_{H\in \mathfrak{H}_\ell}\frac{(2^\xi \log n)^{4\ell}}{n^\ell (cp)^{|E(H)|}}\le n^{(\alpha(\zeta+C^{-1})-1+o(1))\ell}\,.
		\end{equation}
		\item \emph{Case\ 3: $\ell\ge n/\log n$.} In this case, the number of edges within $H$ is bounded by $\xi|H|\le\xi(\ell+K)$ from (i) in admissibility, so
		\begin{equation}
			\label{eq:case3}
			\sup_{H\in \mathfrak{H}_\ell}\frac{(2^\xi \log n)^{4\ell}}{n^\ell (cp)^{|E(H)|}}\le n^{\alpha\xi(K+\ell)-\ell}\,.
		\end{equation}
	\end{itemize}
	By the choice of $\xi,\zeta$ and $C$ in \eqref{eq:xi}, \eqref{eq:zeta}, \eqref{eq:C}, we may choose a positive constant $\delta_2<(1-\alpha(\zeta+C^{-1}))\wedge (1-\alpha\xi)$. Then \eqref{eq:the-use-of-Gc_A^3} follows from \eqref{eq:case1}, \eqref{eq:case2} and \eqref{eq:case3} (note that in the last case $\ell \gg K$). This completes the proof.
\end{proof}

We are now ready to state and prove the final proposition in this section. 
\begin{proposition}
	\label{prop:bound-exp-moment-deterministically}
	Whenever $(\pi^*,G^A,\G^\As)$ is sampled from $\Qc_{A,\As,\sigma}'$, it holds that
	\begin{equation}\label{bound-of-exp-moment-deternministically}
		\frac{1}{(n-K)!}\sum_{\pi\in \B(V,\Vs,A,\sigma)}(c_0p)^{-|\mathcal J_\pi|}=\exp(o(K\log n))\,.
	\end{equation}
\end{proposition}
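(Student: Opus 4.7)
The plan is to enumerate $\pi\in\B(V,\Vs,A,\sigma)$ according to the ``shape'' of $\mathcal J_\pi$ via the configurations $\Lambda$ defined above, apply Lemma~\ref{lem:bound-number-of-pi} to bound $|\Pi(\Lambda)|$, and then factor the resulting sum into three independent pieces controllable via Lemmas~\ref{lem-apply-good-set} and \ref{lem:apply-good-set-2}. Writing $N_\Lambda = \sum_{i\in[r]}|H_i\setminus A| + \sum_{j\in[s]} x_j l_j + \sum_{k\in[t]} y_k m_k$ for the off-$A$ vertex count of $\mathcal J_\pi(\Lambda)$, the estimate $(n-K-N_\Lambda)!/(n-K)! \le n^{-N_\Lambda}$ combined with Lemma~\ref{lem:bound-number-of-pi} will give
\begin{equation*}
	\frac{1}{(n-K)!}\sum_{\pi\in\B(V,\Vs,A,\sigma)}(c_0p)^{-|\mathcal J_\pi|} \;\le\; \mathcal S_H\cdot \mathcal S_C\cdot \mathcal S_T\,,
\end{equation*}
where $\mathcal S_H = \exp(S_H)$ collects (with symmetry factor $1/r!$) ordered tuples of components of $\mathcal J_\pi$ touching $A$, and $\mathcal S_C = \prod_C\sum_{x\ge 0}q_C^x$, $\mathcal S_T = \prod_T\sum_{y\ge 0}q_T^y$ collect non-tree and tree components disjoint from $A$ respectively; here $S_H = \sum_H \operatorname{Aut}(H,v_H) n^{-|H\setminus A|}(c_0p)^{-|E(H)|}$ and $q_C, q_T$ are the analogous per-orbit weights $\operatorname{Aut}(\cdot)\,t(\cdot,\mathcal H)\,n^{-|\cdot|}(c_0p)^{-|E(\cdot)|}$.

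For $\mathcal S_H$, I would combine \eqref{eq:bound-H} with Lemma~\ref{lem:apply-good-set-2} (applied with $c=c_0$): for each $\ell\ge 1$ the contribution from level $\mathfrak H_\ell$ is at most $2K\cdot n^{-\delta_2\ell}$, so $S_H = O(Kn^{-\delta_2})$, hence $\mathcal S_H \le \exp(O(Kn^{-\delta_2})) \le \exp(o(K\log n))$. For $\mathcal S_C$, using \eqref{eq:bound-C} together with the admissibility bound $|E(C)|\le\zeta|C|$ when $|C|\le n/\log n$ and $|E(C)|\le\xi|C|$ otherwise, and $(c_0p)^{-1}=n^{\alpha+o(1)}$, the inequalities $\delta_1+\alpha\zeta<1$ and $\delta_1+\alpha\xi<1$ guaranteed by \eqref{eq:zeta}, \eqref{eq:C}, \eqref{eq:delta} force $\sum_C q_C = o(1)$, giving $\mathcal S_C \le \exp(o(1))$. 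For $\mathcal S_T$, \eqref{eq:bound-T} together with $|E(T)|=\ell-1$ yields a per-level contribution of order $O(n^{(\alpha-1)(\ell-1)+1+o(\ell)})$; summing geometrically gives $\sum_T q_T = O(n^{\alpha}(\log n)^{O(1)})$, and since $\beta>\alpha$ (which follows from \eqref{eq:beta} together with $\alpha<1$), this is $o(K\log n)$ and so $\mathcal S_T \le \exp(o(K\log n))$.

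The main obstacle is the control of $\mathcal S_H$, since components of $\mathcal J_\pi$ touching $A$ can in principle have wildly varying topologies and without an additional truncation the corresponding sum would blow up (the argument in \cite{DD22+} does not face this issue because no localization near a distinguished set $A$ is required there). Lemma~\ref{lem:apply-good-set-2} is designed for precisely this purpose: the good-set property $\Gc_A^3$ forces the $C$-neighborhoods of distinct vertices of $A$ in $(\mathcal H_{\pi^*})^A$ to be vertex-disjoint and tree-like up to depth $C$, so the edge counts of any $H\in\mathfrak H_\ell$ satisfy the three-regime bounds in the proof of that lemma. Combined with the calibrated constants in \eqref{eq:xi}--\eqref{eq:delta}, this just beats the exponentially blowing-up factor $(c_0p)^{-|E(H)|}$, after which multiplying the three bounds for $\mathcal S_H,\mathcal S_C,\mathcal S_T$ yields the claim.
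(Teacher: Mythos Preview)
Your proposal is correct and follows the same route as the paper: enumerate $\pi$ by configurations $\Lambda$, apply Lemma~\ref{lem:bound-number-of-pi} together with $(n-K-N_\Lambda)!/(n-K)!\le (O(1)/n)^{N_\Lambda}$, factor into the three products over $H$, $C$, $T$, and control them via Lemmas~\ref{lem-apply-good-set} and \ref{lem:apply-good-set-2}. Two small slips worth fixing: for $\mathcal S_C$ the paper uses only the uniform bound $|E(C)|\le\xi|C|$ from admissibility~(i) (so only $\delta_1+\alpha\xi<1$ is needed; your $\delta_1+\alpha\zeta<1$ is not guaranteed by \eqref{eq:delta}), and for $\mathcal S_T$ the correct per-level exponent is $(\alpha-1)(\ell-1)+o(\ell)$ without the extra $+1$, giving $\sum_T q_T=o(1)$ and hence $\mathcal S_C=\mathcal S_T=1+o(1)$ directly.
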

Provided with Proposition~\ref{prop:bound-exp-moment-deterministically}, we have that \eqref{eq:bound-exponential-moment} is $\exp(o(K\log n))$ and thus Proposition~\ref{prop:bound-I1} follows.
\begin{proof}
For $(\pi^*,\G^A,\Gs^\As) \sim\Qc_{A,\As,\sigma}'$, the $\pi^*$-intersection graph $\mathcal H$ satisfies $\Gc_A^2\cap \Gc_A^3$. We have
	\begin{equation}
		\label{eq:bound-exp-moment-1}
		\frac{1}{(n-K)!}\sum_{\pi\in\B(V,\Vs,A,\sigma)}(c_0p)^{-|\mathcal J_\pi|}\le\frac{1}{(n-K)!} \sum_{\Lambda}(c_0p)^{-|E(H_\Lambda)|}\times |\Pi(\Lambda)| \,,
	\end{equation}
where the sum is taken over all possible configurations $\Lambda$. It is clear that each tree $T\in \bigcup \mathfrak{T}_\ell$ has $|T|-1$ edges and each $C\in \bigcup\mathfrak C_{\ell}$ has no more than $\xi |C|$ edges by $\Gc_A^2$ and (i) in admissibility. From this and Lemma~\ref{lem:bound-number-of-pi}, we can upper bound \eqref{eq:bound-exp-moment-1} by
\begin{align*}
	&\ \frac{1}{(n-K)!}\sum_{r,s,t\ge 0}\sum_{H_1,\cdots,H_r\in \bigcup \mathfrak H_\ell}\sum_{C_1,\cdots,C_s\in \bigcup \mathfrak{C}_\ell}\sum_{T_1,\cdots,T_t\in \bigcup \mathfrak T_\ell}\sum_{x_1,\cdots,x_s>0}\sum_{y_1,\cdots,y_t>0}\\
	&\quad\ \ (c_0p)^{-\sum_{i\in [r]}|E(H_i)|-\xi\sum_{j\in [s]}x_j|C_j|-\sum_{k\in [t]}y_k(|T_k|-1)}\\
	&\ \times \left(n-K-\sum_{i\in [r]}|H_i\setminus A|+\sum_{j\in [s]}x_jl_j+\sum_{k\in [t]}y_km_k\right)!\\
	&\ \times \prod_{i\in [r]}\operatorname{Aut}(H_i,v_{H_i})
	\times
	 \prod_{j\in [s]}\left(\operatorname{Aut}(C_j)t(C_j,\mathcal H)\right)^{x_j}\times \prod_{k\in[t]}\left(\operatorname{Aut}(T_k)t(T_k,\mathcal H)\right)^{y_k}\,.
\end{align*}
Note that by Stirling's formula, with $K=\lfloor n^\beta\rfloor$ we have $(n-K-t)!/(n-K)!\le (2e/n)^t$ for any $t\ge 0$, thus for $c_1=c_0/2e$, the expression above is bounded by
\begin{align}
	\nonumber&\qquad\qquad\sum_{r,s,t\ge 0}\sum_{H_1,\cdots,H_r\in \bigcup \mathfrak H_\ell}\sum_{C_1,\cdots,C_s\in \bigcup \mathfrak{C}_\ell}\sum_{T_1,\cdots,T_t\in \bigcup \mathfrak T_\ell}\sum_{x_1,\cdots,x_s>0}\sum_{y_1,\cdots,y_t>0}\\
	\nonumber\prod_{i\in [r]}&n^{-|H_i\setminus A|}(c_1p)^{|E(H_i)|}\operatorname{Aut}(H_i,v_{H_i})\prod_{j\in [s]}(c_1np)^{-\xi x_j|C_j|}\operatorname{Aut}(C_j)^{x_j}\prod_{k\in [t]}(c_1np)^{-y_k(|T_k|-1)}\operatorname{Aut}(T_k)^{y_k}\\
	 =&\ \prod_{H\in \bigcup\mathfrak{H}_\ell}\left(1+\frac{\operatorname{Aut}(H,v_H)}{n^{|H\setminus A|}(c_1p)^{|E(H)|}}\right)\prod_{C\in\bigcup \mathfrak C_\ell}\sum_{x\ge0}\left(\frac{\operatorname{Aut}(C)}{(c_1np)^{\xi|C|}}\right)^x\prod_{T\in \bigcup \mathfrak T_\ell}\sum_{y\ge 0}\left(\frac{p\operatorname{Aut}(T)}{(c_1np)^{|T|}}\right)^y\,.\label{eq:bound-exp-moment-2}
\end{align}
As in the proof of \cite[Proposition 3.6]{DD22+}, it can be shown by \eqref{eq:bound-C} and \eqref{eq:bound-T} that the last two terms in \eqref{eq:bound-exp-moment-2} are $1+o(1)$, and thus it remains to show the first term is $\exp(o(K\log n))$. Since $\log(1+x)\le x$ for any $x\ge 0$, the logarithm of the first term in \eqref{eq:bound-exp-moment-2} is bounded by
\begin{equation}
	\label{eq:bound-exp-moment-3}
	\begin{aligned}
&\ \sum_{\ell\ge 1}\sum_{H\in \mathfrak{H}_\ell}\frac{\operatorname{Aut}(H,v_H)}{n^\ell(c_1p)^{|E(H)|}}\le  \sum_{\ell\ge 1}\sup_{H\in \mathfrak H _\ell}\frac{1}{n^\ell (c_1p)^{|E(H)|}}\sum_{H\in \mathfrak H_\ell}\operatorname{Aut}(H,v_H)\\
\stackrel{\eqref{eq:bound-H}}{\le}&\ K\sum_{\ell \ge 1}\sup_{H\in \mathfrak H_\ell}\frac{(2^\xi \log n)^{4\ell}}{n^\ell(c_1p)^{|E(H)|}}\stackrel{\text{\eqref{eq:the-use-of-Gc_A^3}}}{\le}K\sum_{\ell\ge 1} n^{-\delta_2\ell}=o(K\log n)\,,
\end{aligned}
\end{equation}
as desired.
\end{proof}

\end{document}